\def\paragraph{\@startsection{paragraph}{4}	\z@\z@{-\fontdimen2\font} {\normalfont\itshape\bfseries}}
\theoremstyle{plain} 
\newtheorem{theorem}{Theorem}[section] 
\newtheorem{proposition}[theorem]{Proposition}
\newtheorem{corollary}[theorem]{Corollary}
\newtheorem{lemma}[theorem]{Lemma}
\theoremstyle{definition}
\newtheorem{remark}[theorem]{Remark}
\newtheorem{definition}[theorem]{Definition}
\newtheorem{notation}[theorem]{Notation}
\def\mbb #1{\mathbb{#1}}
\def\mbf #1{\mathbf{#1}}
\def\mrm #1{\mathrm{#1}}
\def\Cal #1{\mathcal{#1}}
\def\mfr #1{\mathfrak{#1}}
\def\C{\mbb{C}}
\def\CP{\mbb{CP}}
\def\R{\mbb{R}}
\def\Z{\mbb{Z}}
\def\N{\mbb{N}}
\def\GL{\mrm{GL}}
\def\SL{\mrm{SL}}
\def\PSL{\mrm{PSL}}
\def\tr{\mrm{tr}}
\def\id{\mrm{id}}
\def\Aut{\mrm{Aut}}
\newcommand{\Sing}{\mathrm{Sing\,}}
\newcommand{\sminus}{\smallsetminus}
\newcommand{\uppie}{\scalebox{0.7}{\!$\mathrlap{\rotatebox[origin=c]{-45}{$\LEFTCIRCLE$}}\rotatebox[origin=c]{-135}{$\LEFTCIRCLE$}\!$}}
\newcommand{\downpie}{\scalebox{0.7}{\!$\mathrlap{\rotatebox[origin=c]{45}{$\LEFTCIRCLE$}}\rotatebox[origin=c]{135}{$\LEFTCIRCLE$}\!$}}
\newcommand*{\transp}[2][-3mu]{\ensuremath{\mskip1mu\prescript{\smash{\mathrm t\mkern#1}}{}{\mathstrut#2}}}%
\newcommand*\bigcdot{\mathpalette\bigcdot@{.5}}
\newcommand*\bigcdot@[2]{\mathbin{\vcenter{\hbox{\scalebox{#2}{$\m@th#1\bullet$}}}}}
\def\sX{\mathsf{X}}
\def\sE{\mathsf{E}}
\def\sU{\mathsf{U}}
\def\sI{\mathsf{I}}
\def\sO{\mathsf{O}}
\def\pXzero{\tilde X_0}
\def\pXone{\tilde X_1}
\def\pXinf{\tilde X_\infty}
\def\pX{\tilde X}
\def\pF{\tilde F}
\def\pthetazero{\tilde\theta_0}
\def\pthetaone{\tilde\theta_1}
\def\pthetainf{\tilde\theta_\infty}
\def\pthetat{\tilde\theta_t}
\def\ptheta{\tilde\theta}
\begin{document}

\author{Martin Klime\v{s}}


\title{Wild monodromy of the Fifth Painlevé equation \\ and its action on wild character variety: \\ an approach of confluence}
\subjclass[2010]{34M40, 34M55}

\vskip1pt
\begin{abstract}
\noindent
The article studies the Fifth Painlevé equation and of the nonlinear Stokes phenomenon at its irregular singularity at infinity from the point of view of confluence from 
the Sixth Painlevé equation. This approach is developped separately on both sides of the Riemann--Hilbert correspondance.
On the side of the nonlinear Painlevé--Okamoto foliation the relation between the nonlinear monodromy group of Painlevé VI and the ``nonlinear wild monodromy pseudogroup'' of Painlevé V (that is the pseudogroup generated by nonlinear monodromy, nonlinear Stokes operators and nonlinear exponential torus) is explained in detail.
On the side of the corresponding linear isomonodromic problem, the ``wild'' character variety (the space of the linear monodromy and Stokes data) associated to Painlevé V is constructed through a birational transformation from the character variety (the space of the linear monodromy data) associated to Painlevé VI. 
This allows to transport the known description of the action of the nonlinear monodromy of Painlevé VI on its character variety to that of Painlevé V,
and to provide explicit formulas for the action of the ``nonlinear wild monodromy'' of Painlevé V on its character variety.

\smallskip
\noindent
\emph{Key words\/}: Painlevé equations, wild character variety, confluence, nonlinear Stokes phenomenon 
\end{abstract}

\maketitle

\pagestyle{myheadings}\markboth{}{Wild monodromy of Painlevé V}

\section{Introduction}

The six families of Painlevé equations $P_{I},\ldots,P_{VI}$ can be written in the form of non-autonomous Hamiltonian systems 
\begin{align*}
\frac{dq}{dt}=\, \frac{\partial H_{\bullet}(q,p,t)\!}{\partial p},\qquad
\frac{dp}{dt}=-\frac{\partial H_{\bullet}(q,p,t)\!}{\partial q},\qquad \bullet=I,\ldots,VI,
\end{align*}
the solutions of which define a foliation in the $(q,p,t)$-space (or better, in the Okamoto's semi-compactification of it)
which is transversely Hamiltonian with respect to the projection $(q,p,t)\mapsto t$.
By virtue of the Painlevé property, there are well-defined \emph{nonlinear monodromy} operators acting on the foliation by analytic continuation of solutions along loops in the $t$-variable. 
In the case of the sixth  Painlevé equation $P_{VI}$ the associated monodromy group carries a great deal of information about the foliation and hence about the equation.
The other Painlevé equations $P_{I},\ldots,P_{V}$ are obtained from $P_{VI}$ by a limiting process through confluences of singularities and other degenerations.
When different singularities merge, as is the case in the degeneration $P_{VI}\to P_V$, one loses some of the monodromy. 
It is known that the lost information should reappear in some way as a \emph{nonlinear Stokes phenomenon} at the confluent irregular singularity. 
Roughly speaking, this nonlinear phenomenon corresponds to the possibility to normalize the irregular singularity above certain sectors in the $t$-space (which together cover a full neighborhood of the singularity and above each of which the foliation is fairly rigid), and therefore to the 
existence of canonical 2-parameter families of solutions with well-behaved exponential asymptotics. This was proved originally in the works of Takano \cite{Ta, Ta90} and Yoshida \cite{Yo1, Yo1}, and recently also by Bittmann \cite{Bit}. 
As one passes from one sector to a neighboring one the family changes -- this change is then encoded by a \emph{nonlinear Stokes operator}.  
The ``good'' analogue of the nonlinear monodromy group for the equations $P_{I},\ldots,P_{V}$ is the, so called, ``\emph{wild monodromy pseudogroup}'', which is generated by both the nonlinear monodromy operators and a tuple of nonlinear Stokes operators  together with nonlinear exponential tori associated to each irregular singularity. 
This is a non-linear analogue of the wild monodromy group of meromorphic linear differential systems (or meromorphic connections over Riemann surfaces) of J.~Martinet and J.P.~Ramis \cite{MR2,Ra} whose closure is the differential Galois group \cite{Singer-Put}. 
The main goal of this paper is to describe in explicit terms the dynamics of this wild monodromy pseudogroup in the case of non-degenerated fifth Painlev\'e equation $P_V$.

In the paper \cite{Kl4}, the author has shown that in the case of confluence of two regular singularities to an irregular one in non-autonomous Hamiltonian systems, such as is the case of the degeneration $P_{VI}\to P_{V}$, both the nonlinear Stokes phenomenon and the wild monodromy pseudogroup can be reconstructed from a parametric family of limit \emph{wild monodromy operators} to which the usual nonlinear monodromy operators accumulate along certain discrete sequences of the parameter of confluence. We will use this confluence approach to describe the nonlinear Stokes phenomenon in the fifth Painlevé equation $P_V$
on the other side of the Riemann--Hilbert correspondence: as operators acting on a ``wild character variety''.

This provides one of the very first results in the general program of study of  ``wild monodromy" actions on the character varieties of isomonodromic deformations of linear differential systems that was sketched in \cite{PR}.

The Riemann--Hilbert correspondence in Painlevé equations is a well established and fruitful method based on the fact that these equations
govern the isomonodromic (and iso-Stokes) deformations of certain linear differential systems (or connections) with meromorphic coefficients on $\CP^1$. 
There are two kinds of such isomonodromic linear systems that we will consider here, both leading ultimately to the same description
(this is not surprising since the two systems are related one to the other by a middle convolution and a Laplace transform, or by a Harnad duality \cite{Har94, Maz, Bo05, HF}): 
\begin{itemize}[wide=0pt, leftmargin=\parindent]
	\item[i)] $2\!\times\!2$ traceless linear differential systems with 4 Fuchsian singular points at $0,1,t,\infty\in\CP^1$, governed by the equation $P_{VI}$ with independent variable $t$, and their confluent degeneration to systems with 1 nonresonant irregular singularity of Poincar\'e rank 1 and 2 Fuchsian ones, governed by $P_V$, \\[1pt]
	
	\item[ii)] $3\!\times\!3$ systems in Birkhoff normal form with an irregular singularity of Poincaré rank 1 at the origin (of eigenvalues $0,1,t$) and a Fuchsian singularity at the infinity governed by the equation $P_{VI}$ with independent variable $t$, and their degeneration through confluence of eigenvalues, governed by $P_V$.
\end{itemize}

The usual Riemann--Hilbert correspondence between linear systems and their generalized linear monodromy representations (consisting of monodromy and Stokes data),
can be interpreted as a map between the space of local solutions of the given Painlevé equation with fixed values of parameters, and the space of generalized monodromy representations with fixed local multipliers. This latter space is called the \emph{(wild) character variety} \cite{Bo14}.
In this setting, the Riemann--Hilbert correspondence conjugates the transcendental flow of the Painlevé equations to a locally constant flow on the corresponding  character variety \cite{IIS}, and the nonlinear monodromy of the sixth Painlevé equation $P_{VI}$ then corresponds to an action of the
a pure-braid group $\Cal B_3$ on the character variety \cite{Dub96,DM,I,I2}. 

Our goal is to use the confluence from $P_{VI}$ to $P_{V}$ in order to describe the action of the nonlinear wild monodromy pseudogroup of the foliation of $P_V$ on the 
associated wild character variety of $P_V$.
In order to do that, we will need to describe the confluence procedure on both sides of the Riemann--Hilbert correspondence: 
on the Painlevé--Okamoto foliation on one side, and on the linear isomonodromic problem and on the associated character variety on the other side. 
This obviously brings certain level technicality to this paper. 
The part dealing with the nonlinear foliation by solutions of a nonautonomous Hamiltonian system has been already treated in \cite{Kl4}. 
We will recall these results in the Sections~\ref{section:wm-0}, 
where we introduce the nonlinear monodromy group of $P_{VI}$ and the nonlinear wild monodromy pseudogroup of  $P_{V}$,
and~\ref{section:wm-1a}, where we explain the confluence and the relation between these nonlinear monodromy (pseudo)groups.
 
The main part of this paper is devoted to the study of the confluence in the associated linear isomonodromy problem, and of the dynamics on the wild character variety.
In Section~\ref{section:wm-1} we recall the usual approach to $P_{VI}$ as an equation governing the isomonodromic deformations of $2\!\times\! 2$ linear systems with four Fuchsian singularities, and some classical results concerning the geometry of the character variety of $P_{VI}$ and the braid group action on it.
In Section \ref{section:wm-2} we will study in detail the confluence $P_{VI}\to P_V$ through the viewpoint of isomonodromic deformations,
and show that the (wild) character variety of the equation Painlevé V is obtained from the character varieties of $P_{VI}$ 
by birational changes of coordinates (in fact, a blow-down) depending on the parameter of confluence.
This part is fundamentally based on the theory of confluence of singularities in linear systems of Lambert, Rousseau \& Hurtubise \cite{LR,HLR}. 
An alternative approach is provided in the Appendix using isomonodromic deformations of $3\!\times\! 3$ linear systems in Birkhoff normal form with an irregular singularity of Poincaré rank 1 at the origin and a Fuchsian singularity at infinity, where the degeneration from $P_{VI}$ to $P_V$ happens through a confluence of eigenvalues, the description of which is based on the author's work \cite{Kl1}. 

Using these birational transformations we transfer the explicitly known pure-braid actions from the character variety of $P_{VI}$ to that of $P_V$, and then push them to the limit using the aforementioned result on their accumulation along discrete sequences of the parameter of confluence. 
This leads to our main result Theorem~\ref{theorem:wm-main},
which gives explicit formulas for the action of the nonlinear monodromy, of the nonlinear Stokes operators and of the nonlinear exponential torus of $P_V$ on its wild character variety.

It is expected that the nonlinear wild monodromy pseudogroup described in this paper should have a natural interpretation in terms of a differential Galois theory (e.g. the differential Galois groupoid of Malgrange \cite{Mal, Cas}), perhaps in a similar way to \cite{CL}, and that our results could be eventually applied to construct and classify certain special type solutions of $P_V$ in an analogy with the construction and classification of algebraic solutions of $P_{VI}$ \cite{DM,Bo05,Bo10,LT}. However this is well beyond the scope of this paper.

While the main motivation of this paper is to describe in precise terms the confluence $P_{VI}\to P_V$, and to recover the nonlinear action of the wild monodromy pseudogroup, there are several smaller results that are obtained along the way which are worth of independent attention.
For example, it is known that the irregular singularity of $P_V$ has a special pole free solution, ``tronqu\'ee'' solution, on each of the two sectors of normalizations,
which correspond to the sectorial center manifold of the saddle-node singularity of the foliation. What is perhaps not known, is that this pair of sectorial center manifold solutions unfolds to a single solution in the confluent family of $P_{VI}$, characterized by its asymptotics at both of the two confluent singularities,
and which is pole free on certain ``unfolded sectorial'' domain attached to the two singularities. This solution, both in $P_{VI}$ and at the limit in $P_{V}$, corresponds through the Riemann--Hilbert correspondence to a point on the intersection of two lines on the character variety (which is a cubic surface containing up to 27 lines in case of $P_{VI}$, resp. up to 21 lines in case of $P_V$). These kind of solutions, which seem to be new, might be expected to play a special role in physics, similar to the one that the ``bi-tronqu\'ee'' solutions play. 
Another side result worth of mentioning are Propositions~\ref{prop:wm-lines} and ~\ref{prop:wm-linesV} which give explicit formulas for all the lines on the character varieties of $P_{VI}$ and $P_V$ and their interpretations in terms of the isomonodromic problem. 

Finally, let us remark that our confluence approach can be well extended to the other Painlevé equations (with increasing complexity of the description, the further the equation is from $P_{VI}$ in the degeneration process), and some of this shall be done in future works.
As of now, a general theory of confluence in linear systems has presently been developed only for non-resonant irregular singularities \cite{HLR}, therefore allowing to deal with only about half of the isomonodromic systems associated to Painlevé equations. 
However, in the case of traceless linear $2\times 2$-systems with resonant irregular singularities, this theory has a natural generalization along the lines of \cite{Kl1}: this is a subject of a paper in preparation by the author.

\subsection*{Acknowledgment.}
I am most grateful to Emmanuel Paul and Jean-Pierre Ramis for their interest in this work, for many inspiring discussions that helped to shape parts of this paper, and for their great hospitality during my stay in Toulouse.
I am also indebted to Frank Loray for introducing me to the problem, to Alexey Glutsyuk for his interest and encouragement,
and to Christiane Rousseau who taught me the techniques of confluence.

\goodbreak

\section{Nonlinear monodromy and Stokes phenomenon in $P_{VI}$ and $P_V$}\label{section:wm-0}

\subsection{The Painlevé equations.}
The Painlevé equations originated from the effort of Painlev\'e \cite{Painleve} and Gambier \cite{Gam} to classify
all second order ordinary differential equations of type
$q''=R(q',q,t)$, with $R$ rational,
possessing the so called  \emph{Painlevé property} which controls the ramification points of solutions:

\smallskip\noindent
\textit{%
\textbf{Painlevé property:}
{Each germ of a solution can be meromorphically continued along any path avoiding the singular points of the equation (fixed singularities)}.
In other words, solutions cannot have any other movable singularities other than poles.
}
\smallskip

Painlevé and Gambier \cite{Gam} produced a list of 50 canonical forms of equations to which any such equation can be reduced.
Aside of equations solvable in terms of classical special functions, 
the list contained six new families of equations, $P_{I},\ldots,P_{VI}$, whose general solutions provided a new kind of special functions.
In many aspects they may be regarded as nonlinear analogues of the hypergeometric equations \cite{IKSY}.  

In a modern approach to the Painlevé equations, following on Okamoto's works, the traditional families $P_{II}, P_{III}, P_{V}$ are further divided to subfamilies by specification of some redundant parameters,
and are classified according to the affine Weyl group of their B\"acklund symmetries \cite{OkaVI, NY, Sakai}, as well as according to the type of the isomonodromic problem they control \cite{OO06, PS, CMR15}.
The equation $P_{VI}$ is a mother equation for the other Painlevé equations, which can be obtained through degeneration and confluence following the diagram \cite{OO06}
\newlength{\arrowlength}\settowidth{\arrowlength}{$\searrow$}
\[\begin{matrix}
& & & & P_{III}^{D_6} &\hskip-9pt\to\hskip-9pt & P_{III}^{D_7} & \hskip-9pt\to\hskip-9pt & P_{III}^{D_8}\\[-2pt]
& & &\hskip-9pt\nearrow\hskip-9pt & &\hskip-9pt\nearrow\hskip-\arrowlength\searrow\hskip-9pt & &\hskip-9pt\searrow\hskip-9pt &   \\[-2pt]
P_{VI}& \hskip-9pt\to\hskip-9pt & P_V & \hskip-6pt\to\hskip-6pt & P_V^{deg} & & P_{II}^{JM} & \hskip-6pt\to\hskip-6pt & \hskip-6pt P_I\\[-2pt]
& & &\hskip-9pt\searrow\hskip-9pt & &\hskip-9pt\searrow\hskip-\arrowlength\nearrow\hskip-9pt & &\hskip-8pt\nearrow\hskip-9pt&\\[-2pt]
& & & & P_{IV}& \hskip-9pt\to\hskip-9pt & P_{II}^{FN} & &
\end{matrix}
\]
according to which also the associated isomonodromy problems degenerate.
A good understanding of the degeneration procedures should allow to transfer information along the diagram.
The main obstacle is that as the nature of the singularities changes at the limit, it causes the naive limit of most local objects to diverge -- this is a common rule in  confluence problems. One therefore needs to find for each of the arrows in the above diagram some unified description that allows to deal with this divergence.
This article studies some aspects of the confluence $P_{VI}\to P_V$ through the Riemann--Hilbert correspondence.

Each of the Painlevé equations is equivalent to a time dependent \emph{Hamiltonian system}
\begin{align}\label{eq:wm-hamiltonianvectorfield}
\frac{dq}{dt}=\, \frac{\partial H_{\bullet}(q,p,t)\!}{\partial p},\qquad
\frac{dp}{dt}=-\frac{\partial H_{\bullet}(q,p,t)\!}{\partial q},\qquad \bullet=I,\ldots,VI,
\end{align}
from which it is obtained by reduction to the $q$-variable  \cite{Oka1}.

The general form of the sixth Painlevé equation is \cite{JM}
\begin{equation*}
\begin{split}
P_{VI}(\vartheta):\ \ 
q''&=\frac{1}{2}\Big(\frac{1}{q}+\frac{1}{q-1}+\frac{1}{q-t}\Big)(q')^2-
\Big(\frac{1}{t}+\frac{1}{t-1}+\frac{1}{q-t}\Big)q'\\
&\ +\frac{q(q-1)(q-t)}{2\,t^2(t-1)^2}
\Big[(\vartheta_\infty-1)^2-\vartheta_0^2\frac{t}{q^2}+\vartheta_1^2\frac{(t-1)}{(q-1)^2}+(1-\vartheta_t^2)\frac{t(t-1)}{(q-t)^2}\Big],
\end{split}
\end{equation*}
where $()'=\frac{d}{dt}$, and where $\vartheta=(\vartheta_0,\vartheta_t,\vartheta_1,\vartheta_\infty)\in\C^4$ are complex constants related to the eigenvalues of the associated isomonodromic problem \eqref{eq:wm-A}.
The Hamiltonian function $H_{VI}(q,p,t)$ of its associated Hamiltonian system \eqref{eq:wm-hamiltonianvectorfield} is given by
\begin{equation}\label{eq:wm-H_VI}
H_{VI}=\frac{q(q\!-\!1)(q\!-\!t)}{t(t\!-\!1)}\Big[p^2-\Big(\frac{\vartheta_0}{q}+\frac{\vartheta_1}{q\!-\!1}+\frac{\vartheta_t\!-\!1}{q\!-\!t} \Big)p 
 + \frac{(\vartheta_0\!+\!\vartheta_1\!+\!\vartheta_t\!-\!1)^2-(\vartheta_\infty\!-\!1)^2}{4q(q\!-\!1)}\Big].
\end{equation}
The Hamiltonian system of $P_{VI}$ has three simple (regular) singular points on the Riemann sphere $\CP^1$
at $t=0,1,\infty$.

The non-degenerate \emph{fifth Painlevé equation} $P_V$ (more precisely the fifth Painlevé equation with a parameter $\eta_1=-1$, the general form is obtained by scaling $t\mapsto -\frac{t}{\eta_1}$) is
\begin{equation*}
P_{V}(\tilde{\vartheta}):\ \ 
q''=\Big(\frac{1}{2q}+\frac{1}{q\!-\!1}\Big)(q')^2-
\frac{1}{\tilde t}q'+
\frac{(q\!-\!1)^2}{2\tilde t^2}\Big((\vartheta_\infty\!-1)^2 q-\frac{\vartheta_0^2}{q}\Big)+\tilde\vartheta_1\frac{q}{\tilde t}-\frac{q(q\!+\!1)}{2(q\!-\!1)},
\end{equation*}
where $()'=\frac{d}{d\tilde t}$ and $\tilde{\vartheta}=(\vartheta_0,\tilde{\vartheta_1},\vartheta_\infty)$, is obtained from $P_{VI}$ as a limit $\epsilon\to 0$ after the change of the independent variable and a substitution of the parameters
\begin{equation}\label{eq:wm-t}
t= 1+\epsilon\tilde t,\qquad \vartheta_t= \frac{1}{\epsilon},\quad \vartheta_1= -\frac{1}{\epsilon}+\tilde\vartheta_1,
\end{equation}
which sends the three singularities to $\tilde t=-\frac{1}{\epsilon},0,\infty$.
At the limit, the two simple singular points $-\frac{1}{\epsilon}$ and $\infty$ merge into a double (irregular) singularity at the infinity.

The change of variables \eqref{eq:wm-t}, changes the function $\epsilon\cdot H_{VI}$ to
\begin{equation*}
\begin{split}
H_{VI}^{conf}=\frac{q(q\!-\!1)(q\!-\!1\!-\!\epsilon\tilde t)}{\tilde t(1\!+\!\epsilon\tilde t)}\Big[p^2-&\Big(\frac{\vartheta_0}{q}+\frac{\tilde\vartheta_1\!-\!1}{q\!-\!1}+ \frac{(1\!-\!\epsilon)\tilde t}{(q\!-\!1)(q\!-\!1\!-\!\epsilon\tilde t)} \Big)p\\ 
+ &\frac{(\vartheta_0\!+\!\tilde\vartheta_1\!-\!1)^2\!-\!(\vartheta_\infty\!-\!1)^2}{4q(q\!-\!1)}\Big],
\end{split}
\end{equation*}
and the Hamiltonian system to
\begin{align}\label{eq:wm-PVIepsilon}
\frac{dq}{d\tilde t}=\, \frac{\partial H_{VI}^{conf}(q,p,\tilde t)}{\partial p},\qquad
\frac{dp}{d\tilde t}=-\frac{\partial H_{VI}^{conf}(q,p,\tilde t)}{\partial q},
\end{align}
whose limit $\epsilon\to 0$ is a Hamiltonian system of $P_V$, $H_V=\lim_{\epsilon\to 0}H_{VI}^{conf}$.

\subsection{Nonlinear monodromy of $P_{VI}$.}

Consider the foliation in the $(q,p,t)$-space given by the solutions of the Hamiltonian system of $P_{j}(\vartheta)$, $j=I,\ldots,VI$.
As general solutions may have many poles the flow of $P_{j}$ on $\C^2\times(\CP^1\sminus\Sing(P_j))$, where $\Sing(P_j)\subseteq \{0,1,\infty\}$ is the set of fixed singularities of $P_j(\vartheta)$, is not complete. 
Okamoto \cite{Oka2} has constructed a semi-compactification 
$\Cal M_{j}(\vartheta)$ of this space in form of a fibration over $\CP^1\sminus\Sing(P_j)$ (corresponding to the projection $(q,p,t)\mapsto t$) on which the foliation is analytic and transverse to the fibres.
We will skip the details of this construction here as we won't need them. We will denote 
\begin{equation}\label{eq:wm-Okamotospace}
	\Cal M_{j,t}(\vartheta):= \text{\emph{the Okamoto space of initial conditions of $P_j(\vartheta)$}},
\end{equation} 
the fiber of $\Cal M_{j}(\vartheta)$ above a point $t\in\CP^1\sminus\Sing(P_j)$. 
It is a complex surface sitting inside a compact rational surface as a complement of some anti-canonical divisor \cite{Oka2,Sakai},
and endowed with a symplectic structure given by the standard symplectic form
\begin{equation}\label{eq:wm-omega}
\omega=dq\wedge dp,
\end{equation}
in the local coordinate $(q,p)$.

The Painlevé property of $P_{j}$ means that for each path $t_0\xrightarrow{\gamma} t_1$ in the $t$-space $\CP^1\sminus\Sing(P_j)$ the flow induces a symplectomorphisms
$\Cal M_{j,t_0}(\vartheta)\to \Cal M_{j,t_1}(\vartheta)$ between the fibers corresponding  to analytic continuation of the solutions  of \eqref{eq:wm-hamiltonianvectorfield} along the path (see Figure \ref{figure:wm-1}).
In particular, for any given base-point $t_0\in\CP^1\sminus\Sing(P_j)\}$ the loops $\gamma\in \pi_1(\CP^1\sminus\Sing(P_j),t_0)$
induce a nonlinear \emph{monodromy} action which is a representation of the fundamental group of the base space into the group of symplectomorphisms of $\Cal M_{j,t_0}(\vartheta)$,
\begin{align*}
\pi_1(\CP^1\sminus\Sing(P_j),t_0)&\to\Aut_\omega(\Cal M_{j,t_0}(\vartheta)),\\
\gamma\mapsto \mfr M_\gamma(\cdot,t_0),
\end{align*}
analytically depending on $t_0$.

\begin{figure}[t]
\centering
\includegraphics[width=0.6\textwidth]{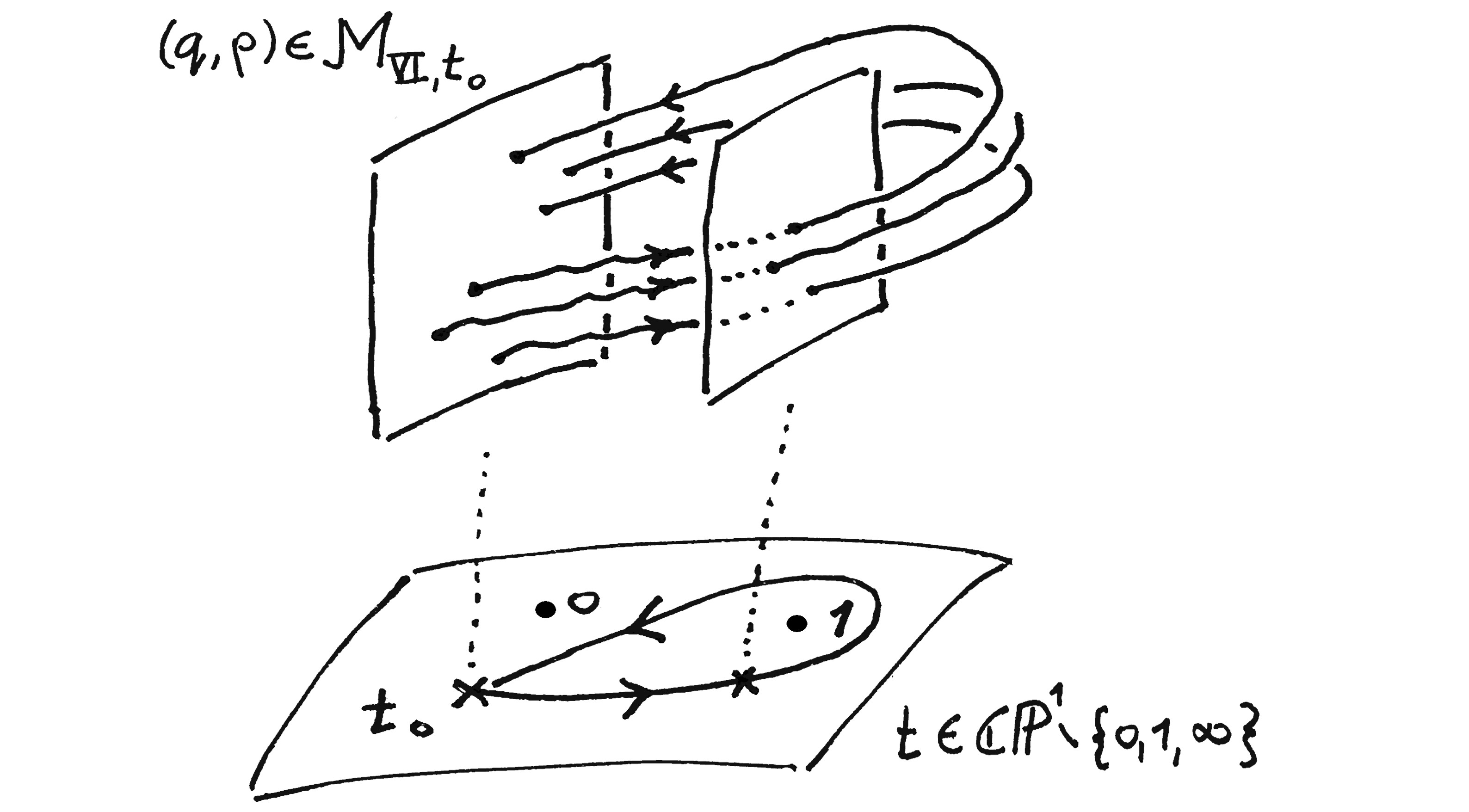}
\caption{The Painlevé flow and its monodromy in case of $P_{VI}$.}
\label{figure:wm-1}
\end{figure}

In case of $P_{VI}$ this nonlinear monodromy 
$\pi_1(\CP^1\sminus\{0,1,\infty\},t_0)\to\Aut_\omega(\Cal M_{VI,t_0}(\vartheta))\, $
carries (together with the analytic invariants of the singular fibers $t=0,1,\infty$ which are determined by the parameter $\vartheta$) a complete information about the foliation and thus about the equation. 
For example, algebraic solutions of $P_{VI}$
correspond to finite orbits of the monodromy action (see e.g. \cite{DM, Bo10, LT})).
The nonlinear monodromy also plays an important role in the differential Galois theory:
the Galois groupoid of Malgrange of the foliation is ``generated'' by the monodromy. 
Cantat and Loray \cite{CL} have showed the irreducibility of $P_{VI}$ in the sense of Malgrange (i.e. maximality of the Malgrange--Galois groupoid),
which then implies transcendentness of general solutions \cite{Cas}, by studying the action of this monodromy.
The important fact on which these studies are based is that  the nonlinear monodromy of $P_{VI}$ has a well known explicit representation of as a braid group action on a certain $\SL_2(C)$-character variety (more about this in Section~\ref{section:wm-1}).

On the other hand, for the other Painlevé equations, $P_I,\ldots,P_V$, the nonlinear monodromy operator doesn't carry enough information 
(in particular it is trivial for $P_I,P_{II}^{FN},P_{II}^{JM}$ and $P_{IV}$).
In case of $P_V$, the reason for this is that in the confluence $P_{VI}\to P_V$, the merging of the two singularities $-\frac{1}{\epsilon}, \infty$ 
in the confluent family \eqref{eq:wm-PVIepsilon} of $P_{VI}$ into a single singularity $\infty$ of $P_V$ means that an important part of the monodromy group is lost and therefore also the information carried by it.
We shall see that this lost information reappears in the nonlinear Stokes phenomenon at the irregular singularity.

\subsection{Nonlinear Stokes phenomenon of $P_{V}$.}
In the local coordinate $x=\tilde t^{-1}$ near $\tilde t=\infty$, the Hamiltonian system \eqref{eq:wm-hamiltonianvectorfield} of $P_V$ has the form
\begin{equation}\label{eq:wm-PVstokes} 
\mbox{\scriptsize$
\begin{aligned}
x^2\tfrac{d}{dx} q &= x\vartheta_0 + \big(1-x(2\vartheta_0\!+\!\tilde\vartheta_1\!-\!1)\big)q +x\Big((\vartheta_0\!+\!\tilde\vartheta_1\!-\!1)q^2-2pq+4pq^2-2pq^3\Big), \\[6pt]
x^2\tfrac{d}{dx}p &= x\tfrac{(\vartheta_0+\tilde\vartheta_1-1)^2-(\vartheta_\infty-1)^2}{4}-\big(1-x(2\vartheta_0\!+\!\tilde\vartheta_1\!-\!1)\big)p +x\Big(\!-\!2(\vartheta_0\!+\!\tilde\vartheta_1\!-\!1)pq+p^2-4p^2\!q+3p^2\!q^2\Big),\hskip-6pt
\end{aligned}$}
\end{equation}
with an irregular singularity at $x=0$.

\begin{theorem}[Takano \cite{Ta, Ta90}, Shimomura \cite{Shi,Shi2}, Yoshida \cite{Yo2}]\label{theorem:wm-normalization0}~
\begin{enumerate}[label=(\roman*), wide=0pt, leftmargin=\parindent]	
\item \textbf{Formal normalization:}
The above system \eqref{eq:wm-PVstokes} can be brought to a formal normal form
\begin{equation}\label{eq:wm-v1v2}
 x^2\tfrac{d}{dx}u=
 \big(1-(2\vartheta_0+\tilde\vartheta_1-1)x+4u_1u_2x\big)\left(\begin{smallmatrix} 1&0 \\[4pt] 0&-1 \end{smallmatrix}\right)u,
 \qquad u=\left(\begin{smallmatrix} u_1 \\[4pt]  u_2 \end{smallmatrix}\right),
\end{equation}
by means of a formal transversely symplectic (w.r.t. the canonical forms \eqref{eq:wm-omega} and $du_1\wedge du_2$) change of coordinates 
\[\left(\begin{smallmatrix} q \\[3pt] p\end{smallmatrix}\right)=\hat{\mbf\Psi}(u,x,0)=\sum_{k\geq 0}\psi^{(k0)}(u)x^k,\]
where $\psi^{(k0)}(u)$ are analytic on some polydisc $\sU=\{|u_1|,|u_2|<\delta_u\}$, $\delta_u>0$.

\item
\textbf{Sectoral normalization:}
The formal series $\hat{\mbf\Psi}$ of $x$ is divergent but Borel summable, with a pair of Borel sums $\mbf\Psi^{\uppie}(u,x,0)$ and $\mbf\Psi^{\downpie}(u,x,0)$
defined respectively above the sectors
\[x\in\begin{cases}
	\sX^{\uppie}(0)=\{|\arg x -\tfrac{\pi}{2}|<\pi-\eta, |x|<\delta_x\},\\
	\sX^{\downpie}(0)=\{|\arg x +\tfrac{\pi}{2}|<\pi-\eta, |x|<\delta_x\},
\end{cases}
\]
for some $0<\eta<\tfrac{\pi}{2}$ arbitrarily small and some $\delta_x>0$ (depending on $\eta$), and $u\in\sU$.
The sectorial transformations 
$\left(\begin{smallmatrix} q \\[3pt] p\end{smallmatrix}\right)=\mbf\Psi^\bullet(u,x,0)$, $\bullet=\uppie,\downpie$,
bring the system \eqref{eq:wm-PVstokes} to its formal normal form \eqref{eq:wm-v1v2}.

In particular, the formal transformation $\hat{\mbf\Psi}(u,x,0)$ and the sectorial transformations $\mbf\Psi^\bullet(u,x,0)$ satisfy the same 
$(\tfrac{\partial}{\partial u}, \tfrac{\partial}{\partial x})$-differential relations over the field of germs of meromorphic functions.
\end{enumerate}
\end{theorem}

\begin{figure}[t]
	\centering
	\includegraphics[width=0.35\textwidth]{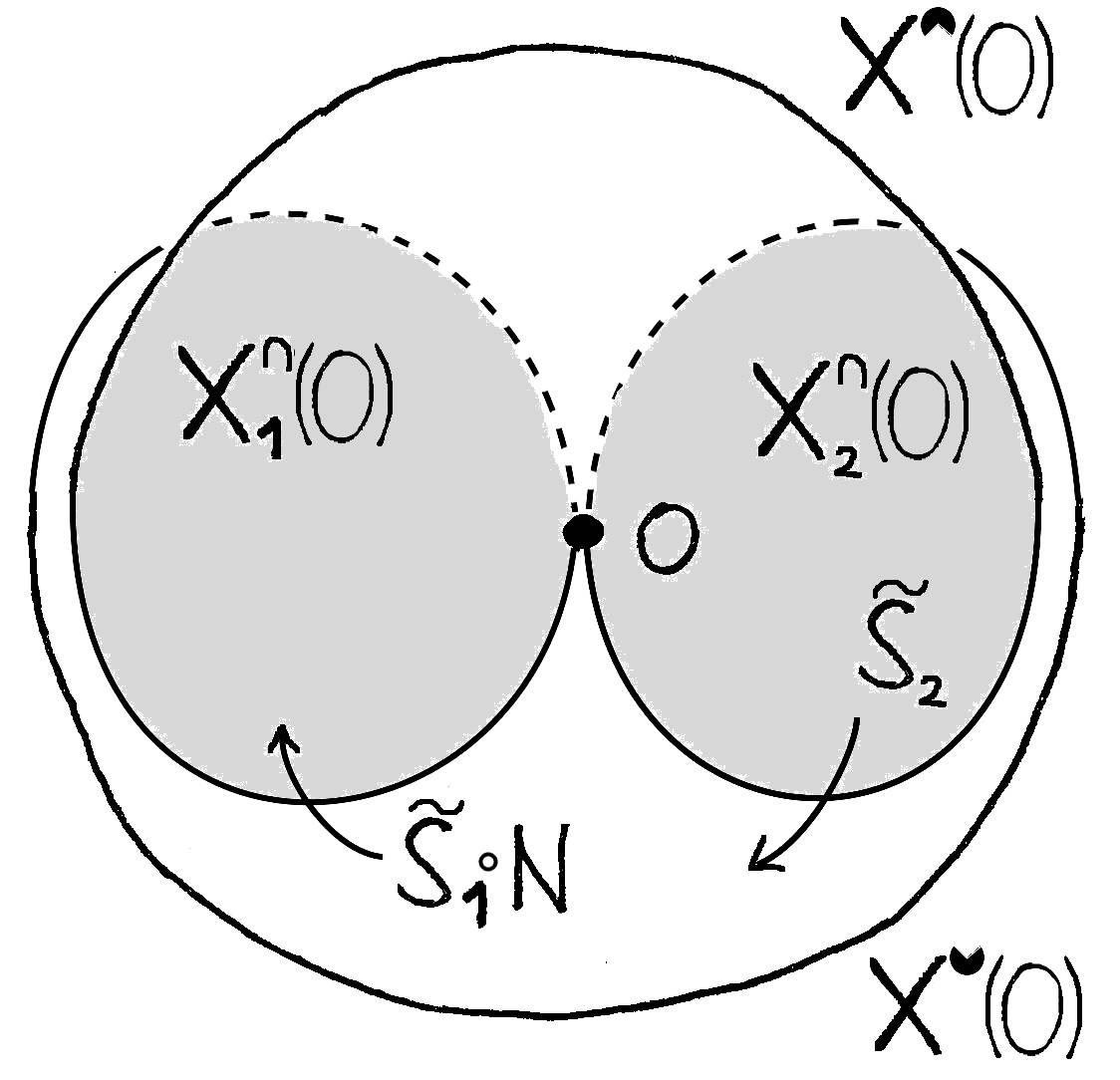}
	\caption{The sectorial domains $\sX^{\protect\uppie}(0)$ and $\sX^{\protect\downpie}(0)$ in the $x$-coordinate and the connecting transformations between the canonical solutions 
		$\left(q^{\protect\uppie}, p^{\protect\uppie}\right)\!(x,0;c)$ and 	$\left(q^{\protect\downpie}, p^{\protect\downpie}\right)\!(x,0;c)$ 
		on the left $\sX^{\protect\cap}_1(0)$ and right $\sX^{\protect\cap}_2(0)$ intersection sectors.}
	\label{figure:wm-sectors0}
\end{figure}

\paragraph{Canonical 2-parameter family of solutions:}
The system \eqref{eq:wm-v1v2}, which is Hamiltonian for the time-$x$-dependent Hamiltonian function
$\frac{\big(1-(2\vartheta_0+\tilde\vartheta_1-1)x\big)u_1u_2+2\big(u_1u_2\big)^2}{x^2}$
with respect to the standard symplectic form $du_1\wedge du_2$,
has a \emph{canonical 2-parameter family of solutions}
\begin{equation}\label{eq:wm-u0}
\begin{aligned}
u_1(x,0;c)&=c_1\,e^{-\tfrac{1}{x}}x^{-(2\vartheta_0+\tilde\vartheta_1-1)+4c_1c_2},\\
u_2(x,0;c)&=c_2\,e^{\,\tfrac{1}{x}}x^{(2\vartheta_0+\tilde\vartheta_1-1)-4c_1c_2},
\end{aligned}
\qquad c=\transp(c_1,c_2)\in\C^2,
\end{equation}
and an \emph{analytic first integral} 
\begin{equation}\label{eq:wm-h}
 h=u_1u_2=c_1c_2.
\end{equation}
We call the parameter $c=\transp(c_1,c_2)$ an \emph{initial condition}.

Let $u^{\uppie}(x,0;c)$, resp. $u^{\downpie}(x,0;c)$, be fixed branches of \eqref{eq:wm-u0} restricted to $\sX^{\uppie}(0)$, resp. $\sX^{\downpie}(0)$,
then we define\footnote{The notation $\mbf\Psi^\bullet(\cdot,x,0)$ stands for the function $u\mapsto \mbf\Psi^\bullet(u,x,0)$,
i.e. $\mbf\Psi^\bullet(\cdot,x,0)\circ u^\bullet(x,0;c)$ denotes the substitution $\mbf\Psi^\bullet(u,x,0)\big|_{u= u^\bullet(x,0;c)}$.}
\begin{equation}\label{eq:wm-qp0}
\left(\begin{smallmatrix} q^\bullet \\[3pt] p^\bullet\end{smallmatrix}\right)\!(x,0;c)=\mbf\Psi^\bullet(\cdot,x,0)\circ u^\bullet(x,0;c),\qquad \bullet=\uppie,\downpie,
\end{equation}
as the corresponding \emph{canonical 2-parameter family of solutions} to the Hamiltonian Painlevé system \eqref{eq:wm-PVstokes}.

\medskip

\paragraph{Sectoral center manifold solutions:}
\begin{corollary}\label{cor:wm-centermanifold0}
	The system \eqref{eq:wm-PVstokes} has a unique bounded analytic solution on each of the two sectors $\sX^{\bullet}(0)$, $\bullet=\uppie,\downpie$, given by 
	$\left(\begin{smallmatrix} q^\bullet \\[3pt] p^\bullet\end{smallmatrix}\right)\!(x,0;0)=\mbf\Psi^\bullet(0,x,0)$, corresponding to the initial condition $c=0$.
\end{corollary}

We call these solutions the ``\emph{sectorial center manifold}'' solutions, since they correspond to a sectorial center manifold of the saddle-node singularity of the foliation. Each of them can be characterized as the pole-free solution on its respective sector. These solutions are also known as the ``\emph{bi-tronqu\'ee}'' (double-truncated) solutions of $P_V$ \cite{AK}.

\medskip

\paragraph{Nonlinear exponential torus:} 
\begin{definition}
A (fibered) \emph{symmetry} of the system \eqref{eq:wm-v1v2} is a symplectic transformation $u\mapsto\phi(u,x)$ that preserves the system.
\end{definition}

\begin{proposition}[Symmetries of the formal normal form \hbox{\protect\cite[Proposition~31]{Kl4}}]\label{prop:wm-exptorus}
Any symmetry of \eqref{eq:wm-v1v2} that is bounded and analytic above one of the sectors $x\in\sX^\bullet(0)$, $\bullet=\uppie,\downpie$, $u\in\sU$, 
is in fact independent of $x$. It is given by the time-1 flow 
\begin{equation}\label{eq:wm-Ta}
\mbf T_{\alpha}:u \mapsto \left(\begin{smallmatrix}  e^{\alpha(u_1u_2)} & 0\\[3pt] 0&  e^{-\alpha(u_1u_2)} \end{smallmatrix}\right)u,\qquad
\end{equation}
of a Hamiltonian vector field 
\begin{equation}\label{eq:wm-xia}
\xi=\alpha(u_1u_2)\big(u_1\tfrac{\partial}{\partial u_1}-u_2\tfrac{\partial}{\partial u_2}\big),\qquad \text{with }\ \alpha(h)\ \text{ an analytic germ}.
\end{equation}
The Lie group of these symmetries is commutative and connected.
\end{proposition}

Following the terminology of the differential Galois theory of linear systems, the Lie group of symmetries \eqref{eq:wm-Ta} is called the \emph{nonlinear exponential torus}.
Its Lie algebra of infinitesimal symmetries \eqref{eq:wm-xia} is called the \emph{nonlinear infinitesimal torus}.

\begin{corollary}
The normalizing sectorial transformations $\hat{\mbf\Psi}$ and $\mbf\Psi^\bullet$ of Theorem~\ref{theorem:wm-normalization0} are unique up to a right composition with the same analytic symmetry $\mbf T_\alpha(u)$.
\end{corollary}

 The nonlinear \emph{exponential torus} \eqref{eq:wm-Ta}  acts on the  Painlevé system \eqref{eq:wm-PVstokes} by the sectorial isotropies
 \[\mfr T_\alpha^{\bullet}(q,p,x):=\mbf\Psi^{\bullet}(\cdot,x,0)\circ \mbf T_\alpha(u),\qquad\bullet=\uppie,\downpie,\]
 given by the time-1-flow of the vector field which is the sectorial pullback of \eqref{eq:wm-xia} by $(\mbf\Psi^{\bullet})^{\circ(-1)}$.
 Its right action on the  canonical solutions
 $\mfr T_\alpha^{\bullet}(\cdot,x)\circ \left(\!\begin{smallmatrix} q^\bullet \\[0pt] p^\bullet\end{smallmatrix}\!\right)\!(x,0;c)=
 \left(\!\begin{smallmatrix} q^\bullet \\[0pt] p^\bullet\end{smallmatrix}\!\right)\!(x,0;\cdot)\circ \mbf T_\alpha(c)$ is given by the corresponding action of 
 $\mbf T_\alpha$ on the initial condition parameter $c$,
  \begin{equation}\label{eq:wm-Tc}
 	\mbf T_\alpha: c \mapsto \left(\begin{smallmatrix}  e^{\alpha(c_1c_2)} & 0\\[3pt] 0&  e^{-\alpha(c_1c_2)} \end{smallmatrix}\right)c,\qquad
 	\alpha\in\Cal O(\C,0).
 \end{equation}
 In particular, the nonlinear
 \emph{formal monodromy} (monodromy of the formal solution $\hat{\mbf\Psi}(\cdot,x,0)\circ u(x,0;c)$), acting on the initial condition $c$ as 
 \begin{equation}\label{eq:wm-nlN}
 	\mbf N:=\mbf T_{2\pi i(-2\vartheta_0-\tilde\vartheta_1+1+4c_1c_2)}:
 c \mapsto \left(\begin{smallmatrix}  e^{2\pi i(-2\vartheta_0-\tilde\vartheta_1+4c_1c_2)} & \\[3pt] &  e^{-2\pi i(-2\vartheta_0-\tilde\vartheta_1+4c_1c_2)} \end{smallmatrix}\right)c,
 \end{equation}
 is an element of the exponential torus.

\medskip

\paragraph{Nonlinear Stokes operators:}
Let
\begin{equation*}
\sX^\cap_1(0)=\{|\arg x|< \tfrac{\pi}{2}-\eta, |x|<\delta_x\},\quad\text{and}\quad
\sX^\cap_2(0)=\{|\arg x+\pi|< \tfrac{\pi}{2}-\eta, |x|<\delta_x\},
\end{equation*}
be the left and right intersection sectors of the overlapping sectors $\sX^{\uppie}(0)$, $\sX^{\downpie}(0)$ (see Figure~\ref{figure:wm-sectors0}).
The two transition maps
\begin{equation}\label{eq:wm-stokes}
\begin{aligned}
\mfr S_1(q,p,x,0)  &=\mbf\Psi^{\downpie}(\cdot,x,0)\circ (\mbf\Psi^{\uppie})^{\circ(-1)}(q,p,x,0),\qquad x\in\sX^\cap_1(0)\subset\sX^{\uppie}(0),\\
\mfr S_2(q,p,x,0)&=\mbf\Psi^{\uppie}(\cdot,x,0)\circ (\mbf\Psi^{\downpie})^{\circ(-1)}(q,p,x,0),\qquad x\in\sX^\cap_2(0)\subset\sX^{\downpie}(0),
\end{aligned}
\end{equation}
are called the \emph{nonlinear Stokes operators} of the  Painlevé system acting on the foliation \eqref{eq:wm-PVstokes} while preserving the fibers $x=const$.
They are exponentially close to identity on the sectors of their definition $\sX^\cap_1(0)$, resp. $\sX^\cap_2(0)$.
Let us remark, that unlike the monodromy, these Stokes operators are defined only locally on the foliation.
Their action on the canonical solutions \eqref{eq:wm-qp0} is represented by a pair of transformations $\Tilde{\mbf  S}_{1}(c)$, $\Tilde{\mbf  S}_{2}(c)$ of the initial condition $c$ defined by
\begin{equation}\label{eq:wm-tildeS}
 \begin{aligned}
\qquad  \mfr S_1(\cdot,x,0)\circ \left(\!\begin{smallmatrix} q^{\uppie} \\[0pt] p^{\uppie}\end{smallmatrix}\!\right)\!(x,0;c)
&=\left(\!\begin{smallmatrix} q^{\uppie} \\[0pt] p^{\uppie}\end{smallmatrix}\!\right)\!(x,0;\cdot)\circ\Tilde{\mbf  S}_{1}(c),\\
  \mfr S_2(\cdot,x,0)\circ \left(\!\begin{smallmatrix} q^{\downpie} \\[0pt] p^{\downpie}\end{smallmatrix}\!\right)\!(x,0;c)
&=\left(\!\begin{smallmatrix} q^{\downpie} \\[0pt] p^{\downpie}\end{smallmatrix}\!\right)\!(x,0;\cdot)\circ\Tilde{\mbf  S}_{2}(c),
\end{aligned}
\end{equation}
which are symplectic w.r.t. the canonical form $dc_1\wedge dc_2$ and independent of $x$.
This pair $\big(\Tilde{\mbf  S}_{1}(c),\Tilde{\mbf  S}_{2}(c)\big)$ 
is well defined up to a simultaneous conjugation by some symmetry $\mbf T_\alpha$. It provides a local analytic invariant of the system \eqref{eq:wm-PVstokes} with respect to fiber-preserving transversely symplectic transformations (e.g. \cite[Theorem~36]{Kl4}). 

\begin{remark}\label{remark:wm-stokesoperators}
	The operators $\Tilde{\mbf S}_1$, resp. $\Tilde{\mbf S}_2$, are the nonlinear Stokes operators corresponding to the singular directions $-\pi$, resp. $0$,
	whereas $\mbf N^{\circ(-1)}\circ\Tilde{\mbf S}_2\circ\mbf N$ is the nonlinear Stokes operator corresponding to the singular direction $\pi$.
\end{remark}

\paragraph{Nonlinear monodromy operator:}
The nonlinear \emph{total monodromy} $\mfr M (q,p,x,0)$ given by the analytic continuation along a simple positive loop around the origin in the $x$-coordinate, and its representation by its action $\Tilde{\mbf M}^\bullet(c)$ on the initial condition of the solution  $\left(\begin{smallmatrix} q^\bullet \\[3pt] p^\bullet\end{smallmatrix}\right)$, $\bullet=\uppie,\downpie$, 
\begin{equation}\label{eq:wm-nlMx}
	\mfr M (\cdot,x,0)\circ \left(\begin{smallmatrix} q^\bullet \\[3pt] p^\bullet\end{smallmatrix}\right)\!(x,0;c)
=\left(\begin{smallmatrix} q^\bullet \\[3pt] p^\bullet\end{smallmatrix}\right)\!(e^{2\pi i}x,0;c)
=\left(\begin{smallmatrix} q^\bullet \\[3pt] p^\bullet\end{smallmatrix}\right)\!(x,0;\cdot)\circ\Tilde{\mbf M}^\bullet(c),
\end{equation}
is then expressed (see \cite{Kl4}) as the composition
\begin{equation}\label{eq:wm-nlM}
	\Tilde{\mbf M}^{\uppie}=\Tilde{\mbf S}_2\circ \Tilde{\mbf S}_1\circ {\mbf N},\qquad
\Tilde{\mbf M}^{\downpie}=\Tilde{\mbf S}_1\circ {\mbf N}\circ \Tilde{\mbf S}_2.
\end{equation}

\medskip

\paragraph{Nonlinear wild monodromy pseudogroup:}

\begin{definition}\label{def:wm-pseudogroup0}
The pseudogroup action on the foliation of $P_V$ that is generated by both the Stokes operators, the total monodromy operator, and the exponential torus
\[\big\langle \mfr S_1, \mfr S_2, \mfr M, \{\mfr T_\alpha^{\uppie}, \mfr T_\alpha^{\downpie}\mid \alpha\in\Cal O(\C,0)\} \big\rangle\]
is called the \emph{nonlinear wild monodromy pseudo-group}.
Its representation on the $c$-space of initial conditions is generated by
\[\big\langle \Tilde{\mbf S}_1, \Tilde{\mbf S}_2, \{\mbf T_\alpha\mid \alpha\in\Cal O(\C,0)\} \big\rangle.\]
\end{definition}

\smallskip 
\emph{The central problem of this paper is to relate this wild monodromy of $P_V$ to the monodromy of $P_{VI}$ and to describe its dynamics.}
\smallskip

It is not very surprising that the only monodromy operators of the confluent family of $P_{VI}$ that converge when $\epsilon\to 0$ are those corresponding to the loops in 
$\pi_1(\CP^1\sminus\{-\frac{1}{\epsilon},0,\infty\},t_0)$ that persist to the limit as loops in  $\pi_1(\CP^1\sminus\{0,\infty\},t_0)$,
while those corresponding to the vanishing loops diverge.
In fact, for each $\epsilon\neq 0$ the nonlinear monodromy group (of $P_{VI}$) is discretely generated, while for $\epsilon=0$ the nonlinear wild monodromy group (of $P_V$) is generated by a continuous family.  
However, the author's paper \cite{Kl4} shows that generators of the wild monodromy pseudogroup can be obtained through the accumulation of monodromy when $\epsilon\to 0$ along the sequences
$\{\epsilon_n\}_{n\in\pm\N}$,  
\begin{equation*}
\tfrac{1}{\epsilon_n}=\tfrac{1}{\epsilon_0}+n.
\end{equation*}
This will be explained in the next section.

\section{Unfolding of the nonlinear Stokes phenomenon}\label{section:wm-1a}

In this section we will summarize the results of \cite{Kl4} when applied to the confluence $P_{VI}\to P_V$.

\subsection{Sectoral normalization and the unfolded nonlinear Stokes operators}

In the coordinate 
\[x=\frac{1}{\tilde t}+\epsilon,\]
 the confluent Painlevé system \eqref{eq:wm-PVIepsilon} is written as 
\begin{equation}\label{eq:wm-PVIhamiltonianx}
\begin{aligned}
x(x-\epsilon)\frac{dq}{dx}=\, \frac{\partial H(q,p,x,\epsilon)\!}{\partial p},\qquad
x(x-\epsilon)\frac{dp}{dx}=-\frac{\partial H(q,p,x,\epsilon)\!}{\partial q},
\end{aligned}
\end{equation}
with
\begin{equation}\label{eq:wm-Hconf}
\begin{aligned}
H(q,p,x,\epsilon)=& -(1+\epsilon\tilde t) H_{VI}^{conf}(q,p,\tilde t,\epsilon)\\
=&\tfrac{(\vartheta_0+\tilde\vartheta_1-1)^2-(\vartheta_\infty-1)^2}{4}((x\!-\epsilon)q-x)+x\vartheta_0p\\
&+\big(1\!-\epsilon-(x\!-\epsilon)\vartheta_0-x(\vartheta_0\!+\!\tilde\vartheta_1\!-\!1)\big)qp+(2x\!-\epsilon)(qp)^2\\
&+(x\!-\epsilon)(\vartheta_0\!+\!\tilde\vartheta_1\!-\!1)q^2p-xqp^2-(x\!-\epsilon)q^3p^2.
\end{aligned}
\end{equation}
An essential tool in understanding the relation between the monodromy of this system for $\epsilon\neq 0$ and the wild monodromy of the limit system with $\epsilon=0$
is the following theorem  which is an unfolded generalization of Theorem~\ref{theorem:wm-normalization0}.

\goodbreak

\begin{theorem}[\hbox{\cite[Theorems 17 \& 43]{Kl4}}]\label{theorem:wm-normalization}~
\begin{enumerate}[label=(\roman*), wide=0pt, leftmargin=\parindent]

\item \textbf{Formal normalization:} 
The confluent Painlevé system \eqref{eq:wm-PVIhamiltonianx} can be brought to a formal normal form
\begin{equation}\label{eq:wm-normalform}
x(x\!-\epsilon)\tfrac{du}{dx}=
\big(1\!-\epsilon-(x\!-\epsilon)\vartheta_0-x(\vartheta_0\!+\!\tilde\vartheta_1\!-\!1)+2(2x\!-\epsilon)u_1u_2\big)\left(\begin{smallmatrix} 1 & 0 \\[6pt] 0 & -1 \end{smallmatrix}\right)u.
\end{equation}
by means of a formal transversely symplectic change of coordinates 
\begin{equation}\label{eq:wm-hatPhi}
 \left(\begin{smallmatrix} q \\[3pt] p\end{smallmatrix}\right)=\hat{\mbf\Psi}(u,x,\epsilon) =\sum_{k,l\geq 0}\psi^{(kl)}(u)\,x^k\epsilon^l,
\end{equation}
where $\psi^{(kl)}(u)$ are analytic on some fixed polydisc $\sU=\{|u_1|,|u_2|<\delta_u\}$, $\delta_u>0$.

The restriction of $\hat{\mbf\Psi}(u,x,\epsilon)$ to the strong invariant manifolds $x=0$:  $\hat{\mbf\Psi}(u,0,\epsilon)=\sum_{l\geq 0}\psi^{(0l)}(u)\,\epsilon^l$, 
and $x=\epsilon$:  $\hat{\mbf\Psi}(u,\epsilon,\epsilon)=\sum_{k,l\geq 0}\psi^{(kl)}(u)\,\epsilon^{k+l}$, are convergent for $(u,\epsilon)\in\sU\times\{|\epsilon|<\delta_\epsilon\}$ for some $\delta_\epsilon>0$.

\item
\textbf{Unfolded sectorial normalization:}
Let $\eta>0$ be some arbitrarily small constant, and let $\delta_x>\!\!>\delta_\epsilon>0$ denote the radii of small discs at the origin in the $x$-and $\epsilon$-space (depending on $\eta$).
Let 
\begin{equation}\label{eq:wm-sE}
\sE_\pm:=\{ |\epsilon|<\delta_\epsilon,\ |\arg(\pm\epsilon)|<\pi-2\eta \} 
\end{equation}
be two sectors in the $\epsilon$-space.
For $\epsilon\in \sE_\pm$, define a ``spiraling domain'' 
$\sX_\pm(\epsilon)$ (see Figure~\ref{figure:wm-monodromy})
as a simply connected ramified domain spanned by the complete real-time trajectories of the vector fields
\begin{equation}\label{eq:wm-xvectfield}
 e^{i\omega_\pm}x(x-\epsilon)\tfrac{\partial}{\partial x}
\end{equation}
that never leave the disc of radius $\delta_x$,
where $\omega_\pm$ is let vary in the interval
\begin{equation}\label{eq:wm-omegapm}
\left\{\!\!\begin{array}{l}   
\max\{0,\arg(\pm\epsilon)\}-\frac{\pi}{2}+\eta<\omega_\pm<\min\{0,\arg(\pm\epsilon)\}+\frac{\pi}{2}-\eta,\quad\text{for $\epsilon\neq 0$},\\[6pt]
|\omega_\pm|<\frac{\pi}{2}-\eta,\quad\text{for $\epsilon=0$.}
\end{array}\right.
\end{equation}
On this domain, there exists a bounded transversely symplectic change of coordinates 
\[\left(\begin{smallmatrix} q \\[3pt] p\end{smallmatrix}\right)=\mbf\Psi_\pm(u,x,\epsilon),\qquad u\in \sU,\ x\in\sX_\pm(\epsilon), \ \epsilon\in\sE_\pm,\]
analytic on the interior of the domain,
which brings the confluent Painlevé system \eqref{eq:wm-PVIhamiltonianx} to its formal normal form \eqref{eq:wm-normalform}.

When $\epsilon$ tends radially to $0$ with $\arg\epsilon=\beta$, then 
$\mbf\Psi_\pm(u,x,\epsilon)$ converges to $\mbf\Psi_\pm(u,x,0)$ uniformly on compact sets of the sub-domains 
$\lim_{\substack{\epsilon\to 0\\\arg\epsilon=\beta}}\sX_{\pm}(\epsilon)\subseteq \sX_\pm(0)$.
The limit domain $\sX_+(0)=\sX_-(0)$ consists of a pair of sectors $\sX^{\uppie}(0)$, $\sX^{\downpie}(0)$ with a common point at $0$,
and the transformation $\mbf\Psi_+(u,x,0)=\mbf\Psi_-(u,x,0)$ consists in fact of a pair of sectorial transformations $\mbf\Psi^{\uppie}(u,x,0),\ \mbf\Psi^{\downpie}(u,x,0)$ of Theorem~\ref{theorem:wm-normalization0} (it is a functional cochain in the terminology of \cite{MR1}).


The transformations $\mbf\Psi_\pm(u,x,\epsilon)$ are asymptotic to $\hat{\mbf\Psi}(u,x,\epsilon)$ when 
$\coprod_{\epsilon\in\sE_\pm}\sX_\pm(\epsilon)\ni(x,\epsilon)\to 0$,
and they satisfy the same $(\tfrac{\partial}{\partial u}, \tfrac{\partial}{\partial x}, \tfrac{\partial}{\partial\epsilon})$-differential relations with meromorphic coefficients.
\end{enumerate}
\end{theorem}

\begin{figure}[t]
	\centering
	\includegraphics[width=0.95\textwidth]{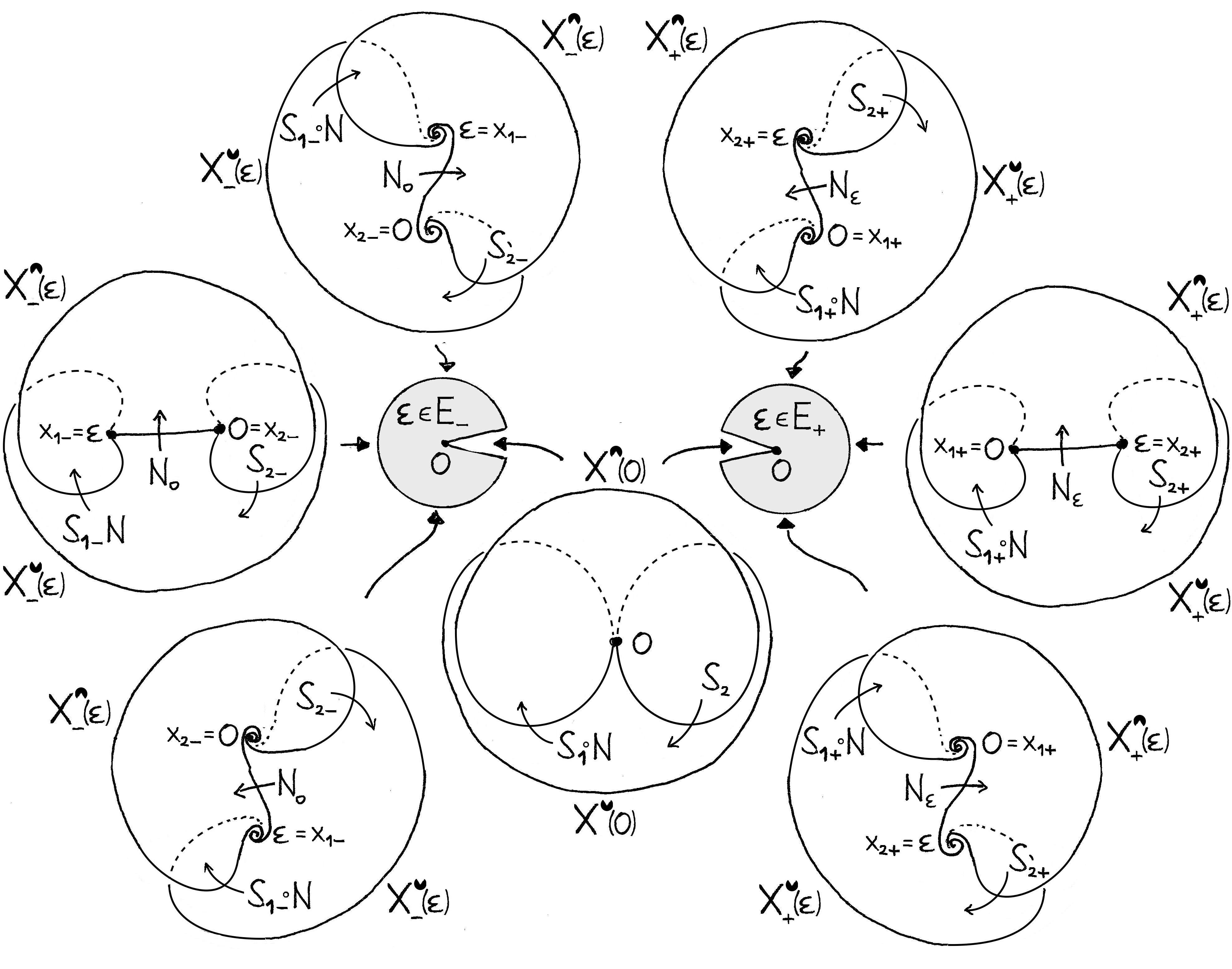}
	\caption{Connecting transformation between the general solutions
		$\left(q_\pm^\bullet, p_\pm^\bullet \right)\!(x,\epsilon;c) =\mbf\Psi_\pm(u^\bullet(x,\epsilon;c),x,\epsilon)$, 
		on the two parts $\sX_\pm^{\protect\uppie}(\epsilon),\sX_\pm^{\protect\downpie}(\epsilon)$ of the domain $\protect\sX_\pm(\epsilon)$.}
	\label{figure:wm-monodromy}
\end{figure}

\begin{remark}
	1)	Strictly speaking, the domains $\sX_{\pm}(\epsilon)$ are defined on the universal covering of $\{|x|<\delta_x\}\sminus\{0,\epsilon\}$.
	
	2) The definition of the domains $\sX_\pm(\epsilon)$ in Theorem~\ref{theorem:wm-normalization} is such that when $x$ approaches a singular point $x_{i\pm}(\epsilon)$, where
	\begin{equation}\label{eq:wm-x12}
		x_{1+}(\epsilon)=x_{2-}(\epsilon)=0, \qquad x_{1-}(\epsilon)=x_{2+}(\epsilon)=\epsilon, 
	\end{equation}
	from within the domain, then the corresponding components of the general solution \eqref{eq:wm-u} tend to
	\[u_i(x,\epsilon)\to \infty,\ u_{3-i}(x,\epsilon)=\tfrac{h}{u_i(x,\epsilon)}\to 0,\]
	when $x\to x_{i\pm}(\epsilon)$ along a real trajectory of \eqref{eq:wm-xvectfield}.
	
	3) Alternatively, the form of the domains $\sX_\pm(\epsilon)$ could be also understood from the point of view of exact WKB analysis of the system \eqref{eq:wm-PVIhamiltonianx}
	after a change of variable $x=\epsilon z$.
\end{remark}

\paragraph{Canonical 2-parameter family of solutions:}
The Hamiltonian \eqref{eq:wm-Hconf} satisfies $H\circ\hat{\mbf\Psi}(u,x,\epsilon)=G(u,x,\epsilon)+O(x(x\!-\!\epsilon))$, where
\[G(u,x,\epsilon)=\mbox{\scriptsize $\tfrac{(\vartheta_0+\tilde\vartheta_1-1)^2-(\vartheta_\infty-1)^2}{4}x+ \big(1\!-\epsilon-(x\!-\epsilon)\vartheta_0-x(\vartheta_0\!+\!\tilde\vartheta_1\!-\!1)\big)u_1u_2 + (2x\!-\epsilon)(u_1u_2)^2$},\]
and $\frac{G(u,x,\epsilon)}{x(x-\epsilon)}$ is a time-$x$-dependent Hamiltonian for the normal form \eqref{eq:wm-normalform}. The general solutions of \eqref{eq:wm-normalform} are of the form
\begin{equation}\label{eq:wm-u}
\begin{aligned}
u_1(x,\epsilon;c)&=c_1 E(c_1c_2,x,\epsilon),\\
u_2(x,\epsilon;c)&=c_2 E(c_1c_2,x,\epsilon)^{-1},
\end{aligned}
\qquad 
c=\transp(c_1,c_2)\in\C^2,
\end{equation}
where
\begin{equation}\label{eq:wm-E}
E(h,x,\epsilon)= 
\left\{\!\!\begin{array}{ll} 
x^{-\frac{1}{\epsilon}+1-\vartheta_0+2h} (x-\epsilon)^{\frac{1}{\epsilon}-\vartheta_0-\tilde\vartheta_1+2h}, & \text{for $\epsilon\neq 0$},\\[6pt]
e^{-\frac{1}{x}}x^{-2\vartheta_0-\tilde\vartheta_1+1+4h}, & \text{for $\epsilon=0$}.\\
\end{array}\right.
\end{equation}

In order for a branch of the solution $u(x,\epsilon;c)$ \eqref{eq:wm-u} to have a limit when $\epsilon\to 0$, one needs to further cut the domains $\sX_\pm(\epsilon)$, $\epsilon\neq 0$,
to two parts  
\begin{equation}\label{eq:wm-uppiedownpie}
\text{an upper part }\sX^{\uppie}_\pm(\epsilon),\qquad \text{and a lower part } \sX^{\downpie}_\pm(\epsilon),
\end{equation} 
corresponding to the two parts $\sX^{\uppie}(0)$ and $\sX^{\downpie}(0)$ of $\sX_+(0)=\sX_-(0)$,
by a cut in between the singular points $0$ and $\epsilon$ along a suitable trajectory of \eqref{eq:wm-xvectfield} (see Figure~\ref{figure:wm-monodromy}).
The two parts of $\sX_\pm(\epsilon)$ then intersect in two outer spiraling sectors 
\[\sX_{i\pm}^\cap(\epsilon)=\{x\in\sX_{\pm}(\epsilon): x_{i\pm}+e^{2\pi i}(x-x_{i\pm})\in\sX_{\pm}(\epsilon)\},\qquad i=1,2,\]
attached to the singularities $x_{i\pm}$ \eqref{eq:wm-x12}, and along the central cut between the singularities $\{x_{1\pm},x_{2\pm}\}=\{0,\epsilon\}$. 

We now take $E_\pm^{\uppie}(h,x,\epsilon)$ and $E_\pm^{\downpie}(h,x,\epsilon)$ as two branches of $E(h,x,\epsilon)$ \eqref{eq:wm-E} on the two parts \eqref{eq:wm-uppiedownpie} of the domain, that agree on the right 
intersection sector $\sX_{2\pm}^\cap$, and have a limit when $\epsilon\to 0$. 
They determine a pair of general solutions \eqref{eq:wm-u} of the model system
\[u_\pm^{\bullet}(x,\epsilon;c),\qquad \bullet=\uppie,\downpie,\] 
and a pair of \emph{canonical 2-parameter families of solutions} of the original system \eqref{eq:wm-PVIhamiltonianx}
\begin{equation}\label{eq:wm-qp}
	\left(\begin{smallmatrix} q_\pm^\bullet \\[3pt] p_\pm^\bullet\end{smallmatrix}\right)\!(x,\epsilon;c) 
	:=\mbf\Psi_\pm(u_\pm^{\bullet}(x,\epsilon;c),x,\epsilon),\qquad \bullet=\uppie,\downpie.
\end{equation}

\medskip

\paragraph{Unfolded center manifold solution:}
\begin{corollary}\label{cor:wm-centermanifold}
The system \eqref{eq:wm-PVIhamiltonianx} has a unique bounded analytic solution on the domain $\sX_\pm^\bullet(\epsilon)$, $\epsilon\in\sE_\pm$, $\bullet=\uppie,\downpie$, given by $\left(\begin{smallmatrix} q_\pm^\bullet \\[3pt] p_\pm^\bullet\end{smallmatrix}\right)\!(x,\epsilon;0)=\mbf\Psi_\pm(0,x,\epsilon)$ (which agrees between
$\sX_\pm^{\uppie}(\epsilon)$ and $\sX_\pm^{\downpie}(\epsilon)$ along the central intersection).
We call it the ``\emph{unfolded sectorial center manifold}'' of the foliation. 
\end{corollary}

\begin{remark}
	This kind of solution seems to be new in the literature. In our opinion this is an analogue of the bi-tronqu\'ee solution for $P_{VI}$.
	While we have obtained it by the confluence, this solution is simply characterized by the fact that it is bounded at both of the singular points when approached along certain logarithmic spiral. Due to the overall symmetry of $P_{VI}$ there should be in general one such solution for every selection of a pair of singular points together with suitable paths of approach.
\end{remark}

\paragraph{Unfolded exponential torus:}
\begin{proposition}[\hbox{\cite[Proposition~31]{Kl4}}]
	For $\epsilon\in\sE_\pm$, any symmetry of \eqref{eq:wm-normalform} that is bounded and analytic on $(u,x)\in\sU\times\sX_\pm(\epsilon)$ is of the form $\mbf T_{a_\epsilon}$ \eqref{eq:wm-Ta}
	with $a_\epsilon(h)$ analytic.
\end{proposition}

\begin{corollary}[\hbox{\cite[Corollary~32]{Kl4}}]
	The normalizing transformations $\hat{\mbf\Psi}$ and $\mbf\Psi_\pm$ of Theorem~\ref{theorem:wm-normalization} are unique up to a right composition with the same analytic symmetry $\mbf T_\alpha(u,\epsilon)$ where $\alpha(h,\epsilon)$ is an analytic germ in $(h,\epsilon)$ at the origin which is uniquely determined by the convergent series
	$\hat{\mbf\Psi}(u,0,\epsilon)$.
\end{corollary}

\paragraph{Unfolded nonlinear Stokes operators:}
The connecting transformations between the 2-parameter families of solutions \eqref{eq:wm-qp} 
$\left(\begin{smallmatrix} q_\pm^\bullet \\[3pt] p_\pm^\bullet\end{smallmatrix}\right)$ on the 3 intersections between $\sX_\pm^{\uppie}(\epsilon)$ and $\sX_\pm^{\downpie}(\epsilon)$, when represented by an action on the initial condition $c$, are as in Figure~\ref{figure:wm-monodromy}.

There, the operators $\mbf S_{1\pm}(c,\epsilon)$, $\mbf S_{2\pm}(c,\epsilon)$ are the representations of 
\emph{unfolded Stokes operators} $\mfr S_1(\cdot,x,\epsilon)$, $\mfr S_2(\cdot,x,\epsilon)$, defined in the same way as in
\eqref{eq:wm-stokes}, \eqref{eq:wm-tildeS} on the intersection sectors $\sX^\cap_{i\pm}(\epsilon)$, $i=1,2$,
and converge to the Stokes operators~\eqref{eq:wm-tildeS} when $\epsilon\to 0$ in the sector $\sE_\pm$
\[\mbf S_{i\pm}(c,\epsilon)\to \mbf S_{i\pm}(c,0)=\Tilde{\mbf S}_i(c)\ \text{ as }\ \sE_\pm\ni\epsilon\to 0.\]
On the other hand the  \emph{formal monodromies} (corresponding to the monodromies of the solutions \eqref{eq:wm-u} of the formal normal form \eqref{eq:wm-normalform})
\begin{equation}\label{eq:wm-N}
\begin{aligned}
\mbf N_0=\mbf T_{2\pi i(-\frac{1}{\epsilon}-\vartheta_0+2c_1c_2)}&:c\!\mapsto\!\!\left(\begin{smallmatrix}\!\! e^{2\pi i(-\frac{1}{\epsilon}-\vartheta_0+2c_1c_2)} \hskip-24pt & \\[3pt] &  e^{2\pi i(\frac{1}{\epsilon}+\vartheta_0-2c_1c_2)}\!\! \end{smallmatrix}\right)\! c,\\
\mbf N_\epsilon=\mbf T_{2\pi i(\frac{1}{\epsilon}-\vartheta_0-\tilde\vartheta_1+1+2c_1c_2)}&:c\!\mapsto\!\! \left(\begin{smallmatrix} \!\! e^{2\pi i(\frac{1}{\epsilon}-\vartheta_0-\tilde\vartheta_1+2c_1c_2)} \hskip-36pt & \\[3pt] & e^{2\pi i(-\frac{1}{\epsilon}+\vartheta_0+\tilde\vartheta_1-2c_1c_2)}\!\!\! \end{smallmatrix}\right)\!c,\\
\end{aligned}
\end{equation}
diverge when $\epsilon\to 0$, except for the total formal monodromy operator
\begin{equation}\label{eq:wm-formalmonodromy} 
\mbf N=\mbf N_0\circ \mbf N_\epsilon=\mbf T_{2\pi i(-2\vartheta_0-\tilde\vartheta_1+4c_1c_2)}.
\end{equation}

\paragraph{Decomposition of nonlinear monodromy operators:}
For $\epsilon\neq 0$, one can now represent the action of the \emph{nonlinear monodromy operators} $\mfr M_0(q,p,x,\epsilon)$ and $\mfr M_\epsilon(q,p,x,\epsilon)$ around the singular points $0$ and $\epsilon$ in positive direction by their action on the initial condition $c$ of the general solutions \eqref{eq:wm-qp},
\begin{equation*}
 \mfr M_{x_{i\pm}}(\cdot,x,\epsilon)\circ \left(\begin{smallmatrix} q_\pm^\bullet \\[3pt] p_\pm^\bullet\end{smallmatrix}\right)\!(x,\epsilon;c)= 
\left(\begin{smallmatrix} q_\pm^\bullet \\[3pt] p_\pm^\bullet\end{smallmatrix}\right)\!(x,\epsilon;\cdot)\circ\mbf M_{x_{i\pm}}^\bullet(c,\epsilon),\qquad
\bullet=\uppie,\downpie,
\end{equation*}
and express it as
\begin{equation}\label{eq:wm-Mdecomposition}
\begin{aligned}
\mbf M_{x_{1\pm}}^{\uppie}&=\mbf N_{x_{2\pm}}^{\circ(-1)}\circ \mbf S_{1\pm}\circ \mbf N,\qquad\qquad& \mbf M_{x_{1\pm}}^{\downpie}&=\mbf S_{1\pm}\circ \mbf N_{x_{1\pm}},\\
\mbf M_{x_{2\pm}}^{\uppie}&=\mbf S_{2\pm}\circ \mbf N_{x_{2\pm}}, & \mbf M_{x_{2\pm}}^{\downpie}&=\mbf N_{x_{2\pm}}\circ \mbf S_{2\pm},
\end{aligned}
\end{equation}
see Figure~\ref{figure:wm-monodromy}. 

\subsection{Accumulation of monodromy}\label{section:wm-1a2}

We can now formulate the essential result of \cite{Kl4} which allows to obtain a representation of the wild monodromy pseudogroup of the limit system as an accumulation of the monodromy pseudogroup of the system when $\epsilon\to 0$.

Let $\{\epsilon_n\}_{n\in\pm\N}$ be sequence in $\sE_\pm\sminus\{0\}$ defined by
\begin{equation}\label{eq:wm-epsilon_n}
 \tfrac{1}{\epsilon_n}=\tfrac{1}{\epsilon_0}+n,\qquad \epsilon_0\in\sE_\pm\sminus\{0\}, \ n\in\pm\N,
\end{equation}
along which the divergent exponential factor $e^{\frac{2\pi i}{\epsilon}}$ in the formal monodromy \eqref{eq:wm-N} stays constant.
Let
\[\tilde{\mfr M}_{x_{i\pm}}(q,p,x;\kappa):=\lim_{n\to\pm\infty}\mfr M_{x_{i\pm}}(q,p,x,\epsilon_n),\quad \kappa:=e^{\frac{2\pi i}{\epsilon_0}},\] 
and let $\Tilde{\mbf M}_{x_{i\pm}}^\bullet(c;\kappa)$ be the corresponding limit action \eqref{eq:wm-nlMx} on the initial conditions of the general solution 
$\left(\begin{smallmatrix} q_\pm^\bullet \\[3pt] p_\pm^\bullet \end{smallmatrix}\right)\!(x,0;c)$.
Then
\begin{equation}\label{eq:wm-wildmonodromy}
\begin{aligned}
\Tilde{\mbf M}_{0+}^{\uppie}(c;\kappa)&= \tilde{\mbf N}_{\epsilon}(\cdot;\kappa)^{\circ(-1)}\circ\tilde{\mbf S}_{1}\circ\mbf N(c),&\qquad\qquad
\Tilde{\mbf M}_{0+}^{\downpie}(c;\kappa)&= \tilde{\mbf S}_{1}\circ\tilde{\mbf N}_{0}(c;\kappa),& \\[-3pt]
\Tilde{\mbf M}_{\epsilon+}^{\uppie}(c;\kappa)&= \tilde{\mbf S}_{2}\circ\tilde{\mbf N}_{\epsilon}(c;\kappa),& 
\Tilde{\mbf M}_{\epsilon+}^{\downpie}(c;\kappa)&= \tilde{\mbf N}_{\epsilon}(\cdot;\kappa)\circ \tilde{\mbf S}_{2}(c),& \\[3pt] 
\Tilde{\mbf M}_{\epsilon-}^{\uppie}(c;\kappa)&= \tilde{\mbf N}_{0}(\cdot;\kappa)^{\circ(-1)}\circ \tilde{\mbf S}_{1}\circ \mbf N(c),&
\Tilde{\mbf M}_{\epsilon-}^{\downpie}(c;\kappa)&= \tilde{\mbf S}_{1}\circ\tilde{\mbf N}_{\epsilon}(c;\kappa),&\\[-3pt]
\Tilde{\mbf M}_{0-}^{\uppie}(c;\kappa)&= \tilde{\mbf S}_{2}\circ\tilde{\mbf N}_{0}(c;\kappa),&
\Tilde{\mbf M}_{0-}^{\downpie}(c;\kappa)&= \tilde{\mbf N}_{0}(\cdot;\kappa)\circ \tilde{\mbf S}_{2}(c),& 
\end{aligned}
\end{equation}
where 
\begin{equation}\label{eq:wm-wildformalmonodromy}
\begin{aligned}
\tilde{\mbf N}_0(\cdot;\kappa)=\mbf T_{2\pi i(-\vartheta_0+2h)-\log\kappa} &:c\!\mapsto\!\!\left(\begin{smallmatrix}\!\! \frac{1}{\kappa} e^{2\pi i(-\vartheta_0+2c_1c_2)} \hskip-28pt & \\[3pt] & \kappa e^{2\pi i(\vartheta_0-2c_1c_2)}\!\! \end{smallmatrix}\right)\! c,\\
\tilde{\mbf N}_\epsilon(\cdot;\kappa)=\mbf T_{2\pi i(-\vartheta_0-\tilde\vartheta_1+2h)+\log\kappa}&:c\!\mapsto\!\! \left(\begin{smallmatrix} \!\!\kappa e^{2\pi i(-\vartheta_0-\tilde\vartheta_1+2c_1c_2)} \hskip-40pt & \\[3pt] & \frac{1}{\kappa} e^{2\pi i(\vartheta_0+\tilde\vartheta_1-2c_1c_2)}\!\!\! \end{smallmatrix}\right)\! c,
\end{aligned}
\end{equation}
and $\mbf N(c)=\tilde{\mbf N}_0(\cdot;\kappa)\circ\tilde{\mbf N}_\epsilon(c;\kappa)$, are elements of the nonlinear exponential torus. 
(The subscripts $0\pm$, $\epsilon\pm$ in \eqref{eq:wm-wildmonodromy}, \eqref{eq:wm-wildformalmonodromy} are purely symbolical and no longer related to the parameter $\epsilon$.) 

In order to express the \emph{nonlinear Stokes operators} from the monodromy one can substitute in \eqref{eq:wm-wildmonodromy}
$e^{2\pi i(-\vartheta_0+2c_1c_2)}$ for $\kappa$ in $\Tilde{\mbf M}_{0\pm}^{\bullet}$ to kill the factor  $\tilde{\mbf N}_0(\kappa)$, 
resp. $e^{2\pi i(\vartheta_0+\tilde\vartheta_1-2c_1c_2)}$ in $\Tilde{\mbf M}_{\epsilon\pm}^{\bullet}$ to kill the factor $\tilde{\mbf N}_\epsilon$,
hence e.g.
\begin{equation}\label{eq:wm-stokesops}
	\begin{aligned}
		\tilde{\mbf S}_1(c)&=\Tilde{\mbf M}_{0+}^{\downpie}\big(c;e^{2\pi i(-\vartheta_0+2h(c))}\big),\\ 
		\tilde{\mbf S}_2(c)&=\Tilde{\mbf M}_{\epsilon+}^{\downpie}\big(c;e^{2\pi i(\vartheta_0+\tilde\vartheta_1-2h(c))}\big)=
		\Tilde{\mbf M}_{\epsilon+}^{\uppie}\big(c;e^{2\pi i(\vartheta_0+\tilde\vartheta_1-2h(c))}\big),\\
		\mbf N^{\circ(-1)}\circ\tilde{\mbf S}_1\circ\mbf N(c)&=\Tilde{\mbf M}_{0+}^{\uppie}\big(c;e^{2\pi i(-\vartheta_0+2h(c))}\big),
	\end{aligned}
\end{equation}
where $h(c)=c_1c_2$.

\begin{proposition}\label{prop:wm-accumulation}
The wild monodromy pseudogroup of $P_V$ (Definition~\ref{def:wm-pseudogroup0}) is generated by the limit operators 
\[\big\langle\tilde{\mfr M}_{0\pm}(q,p,x;e^{\alpha(\mfr h^{\bullet})}),\ \tilde{\mfr M}_{\epsilon\pm}(q,p,x;e^{\alpha(\mfr h^{\bullet})})\,\big|\,\alpha\in\Cal O(\C,0) \big\rangle,\qquad
\text{where }\ \mfr h^{\bullet}\circ\Psi^{\bullet}=u_1u_2.\]
Its representation on the $c$-space of initial conditions over $\sX^\bullet(0)$, $\bullet=\uppie,\downpie$, is generated by,
\[\big\langle\tilde{\mbf M}_{0\pm}^{\bullet}(c;e^{\alpha(h(c))}),\ \tilde{\mbf M}_{\epsilon\pm}^{\bullet}(c;e^{\alpha(h(c))})\,\big|\,\alpha\in\Cal O(\C,0) \big\rangle,\qquad
\text{where }\  h(c)=c_1c_2.\] 	
\end{proposition}

\begin{proof}
Let for example $\sX_\pm^\bullet=\sX_+^{\uppie}$. Then by \eqref{eq:wm-wildmonodromy}, \eqref{eq:wm-wildformalmonodromy} both 	
$\tilde{\mbf M}_{0+}^{\uppie}(c;e^{\alpha(h)})$ and $\tilde{\mbf M}_{\epsilon+}^{\uppie}(c;e^{\alpha(h)})$ belong to the representation of the wild monodromy pseudogroup. Conversely, one can express $\mbf T_\alpha=\tilde{\mbf M}_{0+}^{\uppie}(\cdot;e^{-\alpha(\cdot)})\circ(\tilde{\mbf M}_{0+}^{\uppie})^{\circ(-1)}(c;e^{\alpha(h)})$,
and $\tilde{\mbf S}_1(c)$, $\tilde{\mbf S}_2(c)$ using \eqref{eq:wm-stokesops}.
\end{proof}	
	
Also the vector field  $(c_1\tfrac{\partial}{\partial c_1}-c_2\tfrac{\partial}{\partial c_2})$, which is Hamiltonian vector field of $h=c_1c_2$ with respect to $dc_1\wedge dc_2$, which is in a sense an \emph{``infinitesimal generator'' of the exponential torus} \eqref{eq:wm-Tc}, can be expressed as e.g.
\begin{equation}\label{eq:wm-inftorus}
\begin{aligned}
\dot c 
&=-\big(\kappa\tfrac{d}{d\kappa}\tilde{\mbf N}_{0}(\,\cdot\,;\kappa)\big)\circ \tilde{\mbf N}_{0}(c;\kappa)^{\circ(-1)}
=-\big(\kappa\tfrac{d}{d\kappa}\tilde{\mbf M}_{0+}^{\uppie}(\,\cdot\,;\kappa)\big)\circ \tilde{\mbf M}_{0+}^{\uppie}(c;\kappa)^{\circ(-1)}.
\end{aligned}
\end{equation}
Among its Hamiltonians, which are determined only up to an additive constant, the function $h=c_1c_2$ is characterized by vanishing on the center manifold solution
of Corollary~\ref{cor:wm-centermanifold}.

\section{The character variety of $P_{VI}$ and the nonlinear monodromy action on it}\label{section:wm-1}

In this section we recall the usual approach to $P_{VI}$
through isomonodromic  deformations of $2\!\times\!2$ traceless linear systems with four Fuchsian singularities on $\CP^1$,
the Riemann-Hilbert correspondence between Fuchsian systems and their monodromy representations, 
the character variety of $P_{VI}$ and the modular group action on it, and gather some facts that will serve us later. 
Our main reference for this part are the articles of Iwasaki \cite{I} and of Inaba, Iwasaki and Saito \cite{IIS}.

\begin{notation}
A triple of indices $(i,j,k)$ will always denote a permutation of $(0,t,1)$, and a quadruple $(i,j,k,l)$ will denote a permutation of $(0,t,1,\infty)$.
\end{notation}

\subsection{Isomonodromic deformations of $2\!\times\!2$ systems and the Riemann--Hilbert correspondence}

The sixth Painlevé equation $P_{VI}(\vartheta)$ with a parameter 
\[\vartheta=(\vartheta_0,\vartheta_t,\vartheta_1,\vartheta_\infty)\in\C^4\]
governs isomonodromic deformations of traceless $2\!\times\!2$ linear differential systems with four Fuchsian singularities on $\CP^1$
\begin{equation}\label{eq:wm-A}
\frac{d\phi}{d z}=\Big[\frac{A_0(t)}{z}+\frac{A_t(t)}{z-t} +\frac{A_1(t)}{z-1}\Big]\phi 
\end{equation}
with the residue matrices $A_l\in\mfr{sl}_2(\C)$ having $\pm\tfrac{\vartheta_l}{2}$ as eigenvalues.
In general (if each $A_l$ is semi-simple and the system is irreducible), one can write 
\[A_i=\begin{pmatrix} v_i+\tfrac{\vartheta_i}{2} &  -u_iv_i \\ \tfrac{v_i+\vartheta_i}{u_i} & -v_i- \tfrac{\vartheta_i}{2}\end{pmatrix},
\quad i=0,t,1,\qquad 
-A_0-A_t-A_1=A_\infty=\begin{pmatrix} \tfrac{\vartheta_\infty}{2} & 0 \\ 0 &  -\tfrac{\vartheta_\infty}{2}\end{pmatrix},\]
for some functions $u_i(t),v_i(t)$.
The isomonodromicity of such system is expressed by the Schlesinger equations
\begin{equation}\label{eq:wm-schlesinger}
\frac{dA_0}{dt}=\frac{[A_t,A_0]}{t},\qquad
\frac{dA_t}{dt}=\frac{[A_0,A_t]}{t}+\frac{[A_1,A_t]}{t-1},\qquad
\frac{dA_1}{dt}=\frac{[A_t,A_1]}{t-1},
\end{equation}
corresponding to the integrability conditions on the logarithmic connection in variables $(z,t)$ 
\begin{equation}\label{eq:wm-connection} 
	\nabla(z,t)=d-\Big[A_0(t)\,d\log(z)+A_t(t)\,d\log(z\!-\!t)+A_1(t)\,d\log(z\!-\!1)\Big]
\end{equation} 
on the trivial rank 2 vector bundle.
Denoting $[A(z,t)]_{ij}$ the $(i,j)$-component of the matrix of the system \eqref{eq:wm-A},
then if the system is irreducible the 1-form $[A(z,t)]_{12}\,dz$ is non-null, and so it must have a unique zero at some point $z=q(t)$ 
\[q(t)=\frac{-t[A_0]_{12}}{t[A_t]_{12}+[A_1]_{12}}.\] 
This point is an apparent singularity of the second order linear ODE solved by the first component of any solution $\phi(z,t)$ of \eqref{eq:wm-A}.
Denoting 
\[p(t)=[A(q,t)]_{11}+\frac{\vartheta_0}{2q}+\frac{\vartheta_t}{2(q-t)}+\frac{\vartheta_1}{2(q-1)},\]
then the Schlesinger equations \eqref{eq:wm-schlesinger} are equivalent to the Hamiltonian system $\eqref{eq:wm-hamiltonianvectorfield}$ of $P_{VI}$ \cite{JM,Jim},
whose the Hamiltonian function \eqref{eq:wm-H_VI} is given by
\[H_{VI}\big(q(t),p(t),t\big)=\tr\Big[\big(\tfrac{A_0(t)}{t}+\tfrac{A_1(t)}{t-1}\big)A_t(t)\Big]-\tfrac{\vartheta_0\vartheta_t}{2t}-\tfrac{\vartheta_t\vartheta_1}{2(t-1)}.\]
The \emph{tau function} $\tau_{VI}$ of $P_{VI}$ is defined by
\[\tfrac{d}{dt}\log\tau_{VI}(t)=\tr\Big[\big(\tfrac{A_0(t)}{t}+\tfrac{A_1(t)}{t-1}\big)A_t(t)\Big],\]
which is the coefficient of $\tfrac{1}{z-t}$ in $\tfrac12\tr A(z,t)^2$.

Choosing a germ of a fundamental matrix solution $\Phi(z,t)$ of the system \eqref{eq:wm-A} near some nonsingular point $z_0$, one has a linear monodromy representation (anti-homomorphism)
\[\rho:\pi_1(\CP^1\sminus\{0,t,1,\infty\},z_0)\to\SL_2(\C),\] 
such that the analytic continuation $\Phi(z,t)$ along a path $\gamma$ defines another fundamental matrix solution $\Phi(z,t)\rho(\gamma)$.
The conjugation class of such monodromy representation in $\SL_2(\C)$ is independent of the choice of  $\Phi$.
The \emph{isomonodromic condition} \eqref{eq:wm-schlesinger} on the system \eqref{eq:wm-A} is equivalent to the conjugation class of the monodromy being locally constant with respect to $t$, or equivalently to the
existence of a fundamental matrix solution $\Phi(z,t)$ whose actual monodromy is locally independent of $t$ \cite{Bol}.

The \emph{Riemann--Hilbert correspondence} in this setting is given by the monodromy map between the space of linear systems \eqref{eq:wm-A} with prescribed poles and local eigenvalue data $\pm\tfrac{\vartheta_l}{2}$ modulo global gauge transformations (conjugation by $\SL_2(\C)$) on one side,
and the space of monodromy representations with prescribed local exponents $e_l,e_l^{-1}$
\begin{equation}
e_l:=e^{\pi \vartheta_l},\quad l\in\{0,t,1,\infty\},
\end{equation}
modulo the adjoint action (conjugation) of $\SL_2(\C)$ on the other side (see \cite{IIS} for much more precise setting of the correspondence).
Therefore it can be also translated as a correspondence between solutions of $P_{VI}$ and equivalence classes of monodromy representations.

\subsection{The character variety of $P_{VI}$}

Given a representation (anti-homomorphism)
\begin{equation}\label{eq:PV-rho}
 \rho:\pi_1(\CP^1\sminus\{0,t,1,\infty\},z_0)\to\SL_2(\C),\qquad \rho(\gamma_i\gamma_j)=\rho(\gamma_j)\rho(\gamma_i),
\end{equation}
(where $\gamma_i\gamma_j$ denotes the concatenation of paths, i.e. the path following first $\gamma_i$ and then $\gamma_j$),
let $\gamma_0,\gamma_t,\gamma_1,\gamma_\infty$
be simple loops in the $z$-space around $0,t,1,\infty$ respectively such that $\gamma_0\gamma_t\gamma_1\gamma_\infty=\id$ (see Figure \ref{figure:wm-loops}),
and denote $M_l=\rho(\gamma_l)$ the corresponding monodromy matrices
\[M_\infty M_1M_tM_0=I.\]

\begin{figure}[t]
	\centering
	\begin{tikzpicture} [scale=0.5] 
		\draw (3,3.5) node[above] {$z_0$};
		\filldraw (0,0) circle (2pt);
		\filldraw (2,0) circle (2pt);
		\filldraw (4,0) circle (2pt);
		\filldraw (6,0) circle (2pt);
		\draw (0,-.6) node[below] {$0$};
		\draw (2,-.6) node[below] {$t$};
		\draw (4,-.6) node[below] {$1$};
		\draw (6,-.6) node[below] {$\infty$};
		\begin{scope}[thick,decoration={markings, mark=at position 0.5 with {\arrow{>}}}] 
			\draw[postaction={decorate}] (3,3.5) -- (80:0.5) arc(80:400:0.5) -- cycle;
			\draw[postaction={decorate}] (3,3.5) -- ($(2,0)+(100:0.5)$) arc(100:420:0.5) -- cycle;
			\draw[postaction={decorate}] (3,3.5) -- ($(4,0)+(120:0.5)$) arc(120:440:0.5) -- cycle;
			\draw[postaction={decorate}] (3,3.5) -- ($(6,0)+(140:0.5)$) arc(140:460:0.5) -- cycle;
		\end{scope}
	\end{tikzpicture}
	\caption{The loops  $\gamma_0,\gamma_t,\gamma_1,\gamma_\infty\in\pi_1(\CP^1\sminus\{0,t,1,\infty\},z_0)$.}
	\label{figure:wm-loops}
\end{figure} 

The conjugacy class of an irreducible monodromy representation is completely determined by its trace coordinates 
by a theorem of Fricke, Klein and Vogt (cf. \cite{Mag}). 
These coordinates are given by the four parameters
\begin{equation}\label{eq:wm-a}
a_l=\tr(M_l)=e_l+\tfrac{1}{e_l}=2\cos(\pi\vartheta_l),\quad l=0,t,1,\infty, 
\end{equation}
and the three variables
\begin{equation}\label{eq:wm-X}
X_0=\tr(M_1M_t),\quad X_t=\tr(M_0M_1),\quad X_1=\tr(M_tM_0), 
\end{equation}
satisfying the \emph{Fricke relation}
\begin{equation}\label{eq:wm-Fricke}
F(X,\theta(a))=0,
\end{equation}
where
\begin{equation}\label{eq:wm-F}
 F(X,\theta):=X_0X_tX_1+X_0^2+X_t^2+X_1^2-\theta_0X_0-\theta_tX_t-\theta_1X_1+\theta_\infty,
\end{equation}
with
\[\theta_i(a)=a_ia_\infty+a_ja_k, \ \text{ for } i=0,t,1,\quad\text{and}\quad \theta_\infty(a)=a_0a_ta_1a_\infty+a_0^2+a_t^2+a_1^2+a_\infty^2-4.\]

\begin{definition}
We call \emph{the character variety of $P_{VI}$} the complex cubic surface 
\begin{equation}\label{eq:wm-S}
 \Cal S_{VI}(\theta)=\{X\in\C^3: F(X,\theta)=0\}.
\end{equation}
\end{definition}

In this setting, the Riemann--Hilbert correspondence 
can be seen as a map between the Hamiltonian flow of the Painlevé system 
(defined on the Okamoto fibration $\Cal M_{VI}(\vartheta)\to\CP^1\sminus\{0,1,\infty\}$)
on one side and a locally constant flow on the character variety on the other side
\[RH_{VI,t}: \Cal M_{VI,t}(\vartheta)\to \Cal S_{VI}(\theta).\]
Under this correspondence the Okamoto space of initial conditions $\Cal M_{VI,t}(\vartheta)$ is a minimal resolution of singularities of $\Cal S_{VI}(\theta)$
\cite{IIS}.

The character variety $\Cal S_{VI}(\theta)$ is equipped with a natural algebraic symplectic form 
\begin{equation}\label{eq:wm-Omega}
\omega_{\Cal S_{VI}}=\frac{dX_t\wedge dX_0}{2\pi i F_{\!X_1}}=\frac{dX_1\wedge dX_t}{2\pi i F_{\!X_0}}=\frac{dX_0\wedge dX_1}{2\pi i F_{\!X_t}},
\end{equation}
where 
\[\,F_{\!X_i}=\tfrac{dF}{dX_i}=X_jX_k+2X_i-\theta_i.\] 
The Poisson bracket associated to $-2\pi i\,\omega_{\Cal S_{VI}}$ is the \emph{Goldman bracket}
\[\{X_i,X_j\}=F_{\!X_k}, \quad\text{$(i,j,k)$ a cyclic permutation of $(0,t,1)$.}\]

\begin{proposition}\label{prop:wm-omega}
The standard symplectic form $\omega$ \eqref{eq:wm-omega} on the $(q,p)$-space corresponds through the Riemann-Hilbert correspondence to the symplectic form \eqref{eq:wm-Omega}. 
\end{proposition}

\begin{proposition}[\emph{Jimbo's asymptotic formula} \cite{Jim}]\label{prop:wm-Jimbo}
Given a monodromy representation $\rho$ as above and its associated coordinate $X=(X_0,X_t,X_1)$ \eqref{eq:wm-X} on the character variety, 
then the solution of $P_{VI}(\vartheta)$ corresponding to a point $X\in\Cal S_{VI}(\theta)$ has the following asymptotics when $t\to 0$ in the sector $|\arg t|<\pi$:
\begin{equation}\label{eq:wm-Jimbo}
\begin{aligned}
t^{\sigma_1-1}q&=\alpha(X,\vartheta)+ O(t^{\sigma_1})+ O(t^{1-\sigma_1})\\
t^{1-\sigma_1}p&=\frac{\vartheta_0+\vartheta_t-\sigma_1}{2\alpha(X,\vartheta)}+O(t^{\sigma_1})+O(t^{1-\sigma_1}),
\end{aligned}
\end{equation}
where $\sigma_1\neq 0$ is such that $2\cos\pi\sigma_1=X_1$ and $0\leq\Re\sigma_1<1$, 
\[\alpha=\frac{(\vartheta_0+\vartheta_t+\sigma_1)(-\vartheta_0+\vartheta_t+\sigma_1)(\vartheta_\infty+\vartheta_1+\sigma_1)\cdot d(\sigma_1,\vartheta)}
{4\sigma_1^2\,(\vartheta_\infty+\vartheta_1-\sigma_1)\cdot c(\sigma_1,\vartheta)\,[a(\sigma_1,X_0,\vartheta)+b(\sigma_1,X_1,\vartheta)]},\]
with
\begin{align*}
a&=\tfrac{1}{4}e^{\pi i\sigma_1}[2i\sin\pi\sigma_1\cdot X_0-\theta_t]=\tfrac{1}{4}[(e^{\pi i\sigma_1}X_1-2)X_0-\theta_t],\\
b&=\tfrac{1}{4}[2i\sin\pi\sigma_1\cdot X_t+\theta_0]=\tfrac{1}{4}[(2e^{\pi i\sigma_1}-X_1)X_t+\theta_0],\\
c&=\frac{\Gamma(1-\sigma_1)^2 \Gamma\big(1+\tfrac{\vartheta_0+\vartheta_t+\sigma_1}{2}\big) \Gamma\big(1+\tfrac{-\vartheta_0+\vartheta_t+\sigma_1}{2}\big)
\Gamma\big(1+\tfrac{\vartheta_\infty+\vartheta_1+\sigma_1}{2}\big) \Gamma\big(1+\tfrac{-\vartheta_\infty+\vartheta_1+\sigma_1}{2}\big)}
{\Gamma(1+\sigma_1)^2 \Gamma\big(1+\tfrac{\vartheta_0+\vartheta_t-\sigma_1}{2}\big) \Gamma\big(1+\tfrac{-\vartheta_0+\vartheta_t-\sigma_1}{2}\big)
\Gamma\big(1+\tfrac{\vartheta_\infty+\vartheta_1-\sigma_1}{2}\big) \Gamma\big(1+\tfrac{-\vartheta_\infty+\vartheta_1-\sigma_1}{2}\big)},\\
d&=4\sin\tfrac{\pi(\vartheta_0+\vartheta_t-\sigma_1)}{2} \sin\tfrac{\pi(-\vartheta_0+\vartheta_t-\sigma_1)}{2}
\sin\tfrac{\pi(\vartheta_\infty+\vartheta_1-\sigma_1)}{2} \sin\tfrac{\pi(-\vartheta_\infty+\vartheta_1-\sigma_1)}{2},
\end{align*}
under the assumption that
\[\vartheta_0,\ \vartheta_t, \vartheta_1, \vartheta_\infty, 
\tfrac{\vartheta_0+\vartheta_t\pm\sigma_1}{2},\ \tfrac{-\vartheta_0+\vartheta_t\pm\sigma_1}{2},\
\tfrac{\vartheta_\infty+\vartheta_1\pm\sigma_1}{2},\ \tfrac{-\vartheta_\infty+\vartheta_1\pm\sigma_1}{2}\ \notin\Z, \]
see \cite[p.191]{Bo05}. 	
\end{proposition}

\begin{proof}[Proof of Proposition \ref{prop:wm-omega}]
We will use the Jimbo's asymptotic formula \eqref{eq:wm-Jimbo}. 
Since the point $X\in\Cal S_{VI}(\theta)$ corresponding to a given solution $\left(q(t),p(t)\right)$ of $P_{VI}$ is locally independent of $t$, we can take limit $t\to 0$ and ignore higher order terms. Restricting to the subvariety where $\Re\sigma_1\neq0$, we have
\begin{equation*}
\omega=dq\wedge dp=\frac{d\sigma_1\wedge d\alpha}{2\alpha}
=\frac{dX_1\wedge(da+db)}{4(a+b)\pi\sin\pi\sigma_1}
=-\frac{dX_1\wedge(e^{\pi i\sigma_1}dX_0+dX_t)}{2\pi i(e^{\pi i\sigma_1}F_{\!X_t}-F_{\!X_0})}=\frac{dX_1\wedge dX_t}{2\pi i F_{\!X_0}},
\end{equation*}
using that 
\begin{equation}\label{eq:wm-ab}
4(a+b)=e^{\pi i\sigma_1}F_{\!X_t}-F_{\!X_0},	
\end{equation}
and the identity $\frac{dX_1\wedge dX_0}{F_{\!X_t}}=\frac{dX_t\wedge dX_1}{F_{\!X_0}}$.
\end{proof}

\subsection{Lines and singularities of $\Cal S_{VI}(\theta)$}\label{paragraph:wm-singularitiesSVI}

The projective completion of the character variety  $\Cal S_{VI}(\theta)$ in $\CP^3$ is a compact cubic surface. In the classical theory of classification of cubic surfaces
a major role is played by the configurations of complex lines inside the surface.\footnote{The author is grateful to E.~Paul and J.P.~Ramis for pointing out the major significance of these lines in the theory of Painlev\'e equations and for illuminating discussions on this subject.} We have: 
 
\begin{proposition}[Lines of $\Cal S_{VI}(\theta)$]\label{prop:wm-lines}
The Fricke polynomial $F(X,\theta)$  \eqref{eq:wm-F} can be decomposed as
\begin{align*}
F(X,\theta)=&\,
(X_k-\tfrac{e_i}{e_j}-\tfrac{e_j}{e_i})(F_{\!X_k}-X_k+\tfrac{e_i}{e_j}+\tfrac{e_j}{e_i})\\
&-\tfrac{1}{e_ie_j}(e_iX_i+e_jX_j-a_\infty-e_ie_ja_k) (e_iX_j+e_jX_i-a_k-e_ie_ja_\infty),\\[3pt]
=&\, (X_k-e_ie_j-\tfrac{1}{e_ie_j})(F_{\!X_k}-X_k+e_ie_j+\tfrac{1}{e_ie_j})\\
&-\tfrac{1}{e_ie_j}(e_ie_jX_i+X_j-e_ja_\infty-e_ia_k) (e_ie_jX_j+X_i-e_ja_k-e_ia_\infty),\\[3pt]
=&\, (X_k-\tfrac{e_k}{e_\infty}-\tfrac{e_\infty}{e_k})(F_{\!X_k}-X_k+\tfrac{e_k}{e_\infty}+\tfrac{e_\infty}{e_k})\\
&-\tfrac{1}{e_ke_\infty}(e_\infty X_i+e_kX_j-a_i-e_ke_\infty a_j) (e_k X_i+e_\infty X_j-a_j-e_k e_\infty a_i),\\[3pt]
=&\, (X_k-e_k e_\infty-\tfrac{1}{e_k e_\infty})(F_{\!X_k}-X_k+e_k e_\infty+\tfrac{1}{e_k e_\infty})\\
&-\tfrac{1}{e_ke_\infty}(X_j+e_ke_\infty X_i-e_ka_i-e_\infty a_j) (X_i+e_ke_\infty X_j-e_ka_j-e_\infty a_i).
\end{align*}
In particular, the following 24 lines (counted with multiplicity) are contained in $\Cal S_{VI}(\theta)$:
\begin{align*}
&\{X_k=\tfrac{e_i}{e_j}+\tfrac{e_j}{e_i}, \quad e_iX_i+e_jX_j=a_\infty +e_ie_ja_k\},\\
&\{X_k=\tfrac{e_i}{e_j}+\tfrac{e_j}{e_i}, \quad e_iX_j+e_jX_i=a_k +e_ie_ja_\infty\},\\
&\{X_k=e_ie_j+\tfrac{1}{e_ie_j}, \quad X_i+e_ie_jX_j=e_ja_k+e_ia_\infty\},\\
&\{X_k=e_ie_j+\tfrac{1}{e_ie_j}, \quad X_j+e_ie_jX_i=e_ja_\infty+e_ia_k\},\\
&\{X_k=\tfrac{e_k}{e_\infty}+\tfrac{e_\infty}{e_k},\quad  e_\infty X_i+e_kX_j=a_i+e_ke_\infty a_j\},\\
&\{X_k=\tfrac{e_k}{e_\infty}+\tfrac{e_\infty}{e_k},\quad  e_k X_i+e_\infty X_j=a_j+e_k e_\infty a_i\},\\
&\{X_k=e_k e_\infty+\tfrac{1}{e_k e_\infty},\quad X_i+e_ke_\infty X_j=e_ka_j+e_\infty a_i\},\\
&\{X_k=e_k e_\infty+\tfrac{1}{e_k e_\infty},\quad X_j+e_ke_\infty X_i=e_ka_i+e_\infty a_j\}.
\end{align*}
For each pair $l,m\in\{0,t,1,\infty\}$ each of the two planes 
\begin{equation}\label{eq:wm-lines}
\begin{cases} X_n=e_le_m+\tfrac{1}{e_le_m},\\ X_n=\tfrac{e_l}{e_m}+\tfrac{e_m}{e_l},\end{cases}\qquad
\text{with }\ (l,m,n)=\begin{cases}(i,j,k) \\ (k,\infty,k) \end{cases},
\end{equation}
intersects $\Cal S_{VI}(\theta)$ at 2 lines. The resulting 4 lines correspond to the reducibility of the pair of matrices $\{M_l,M_m\}$, i.e. to the existence of common invariant space for the pair.
\end{proposition}

\begin{remark}\label{remark:wm-mixedbasis}
To be more precise, if for each $l=0,t,1,\infty$ the monodromy matrix $M_l$ is diagonalizable, then there exists a pair of invariant subspaces of the space of solutions of \eqref{eq:wm-A}, giving rise to a basis of solutions (sometimes called Levelt basis) with respect to which $M_l$ is diagonal. 
When each of these basic solutions is analytically continued towards the base-point $z_0$ of $\pi_1(\CP^1\sminus\{0,t,1,\infty\},z_0)$ along the path encircled by the loop $\gamma_i$ defining the monodromy (Figure~\ref{figure:wm-loops}),
then for every pair $\{M_l,M_m\}$ there are 4 different possibi lities how one  can form a \emph{mixed basis} out of the two pairs of solutions. 
Each of the 4 lines at the intersection of $\Cal S_{VI}(\theta)$ with the planes \eqref{eq:wm-lines} correspond to the degeneracy of one of these 4 mixed bases.
\end{remark}

The projective completion of $\Cal S_{VI}(\theta)$ in $\CP^3$ contains 3 additional lines at infinity, giving the total of 27 lines provided by the classical Cayley--Salmon theorem \cite{Cay69,BW79}. They are all distinct if and only if $\Cal S_{VI}(\theta)$ is non-singular.

\begin{remark}
By the classic theory of cubic surfaces, there are 45 tri-tangent planes, i.e. containing 3 lines forming a triangle, and each line belongs to exactly 5 of these planes. The above 12 decompositions of the cubic corresponds to the $3\times 4$ planes that contain one of the 3 lines at infinity, the fifth plane being the plane at infinity.
\end{remark}

\smallskip

\paragraph{Singular points of $\Cal S_{VI}(\theta)$.}
The surface $\Cal S_{VI}(\theta)$ is simply connected (cf. \cite{CL}), and it may or may not have singular points depending on $a$, but it never has more than 4 singular points \cite[Corollary 4.6]{Obl04}.
The singularities that appear correspond to unstable monodromy representations, which are of two kinds:
\begin{itemize}
\item Either $M_l=\pm I$ for some $l\in\{0,t,1,\infty\}$, hence $e_l=\pm 1$. 

\smallskip
If $l=i\in\{0,t,1\}$, then $\displaystyle\ a_i=\pm 2,\quad X_i=\pm a_\infty,\quad X_j=\pm a_k,\quad X_k=\pm a_j.$

\smallskip
If $l=\infty$, then $\displaystyle \ a_\infty=\pm 2,\quad X_i=\pm a_i,\ i=0,t,1.$

\item Or the representation is reducible, in which case $\,M_l=\left(\begin{smallmatrix}e_l^{\delta_l} & *\\ 0&e_l^{-\delta_l}\end{smallmatrix}\right),\,$ $l=0,t,1,\infty$, 
for some quadruple of signs
$(\delta_0,\delta_t,\delta_1)\in\{\pm 1\}^3$, $\delta_\infty=1$, and
\[e_0^{\delta_0}e_t^{\delta_t}e_1^{\delta_1}e_\infty=1,\qquad X_i=e_j^{\delta_j}e_k^{\delta_k}+e_j^{-\delta_j}e_k^{-\delta_k}.\]
\end{itemize}
The surface is therefore singular if and only if 
\[\prod_{l\in\{0,t,1,\infty\}}\!\!\!\!(a_l^2-4)\cdot w(a)=0,\]
where
\begin{align*}
w(a)&:=(a_0\!+a_t\!+a_1\!+a_\infty)(a_0\!+a_\infty\!-a_t\!-a_1)(a_t\!+a_\infty\!-a_1\!-a_0)(a_1\!+a_\infty\!-a_0\!-a_t)\,-\\
&\qquad -(a_0a_\infty\!-a_ta_1)(a_ta_\infty\!-a_1a_0)(a_1a_\infty\!-a_0a_t)\\
&=\frac{(e_0e_te_1e_\infty-1)(e_0e_t-e_1e_\infty)(e_te_1-e_0e_\infty)(e_1e_0-e_te_\infty)}{(e_0e_te_1e_\infty)^4}\!\!\prod_{l\in\{0,t,1,\infty\}}\!\!\!\!(e_l-e_ie_je_k),
\end{align*}
see \cite{I2}.
All the singularities of the projective completion of $\Cal S_{VI}(\theta)$ are contained in its finite part, where they are situated on the intersection of several lines.

The \emph{singular locus} of $\Cal S_{VI}(\theta)$ corresponds through the Riemann--Hilbert correspondence to so called \emph{Riccati solutions}  of $P_{VI}$ \cite{IIS}.

\subsection{The braid group action on $\Cal S_{VI}(\theta)$}

The nonlinear \emph{monodromy action} on the space of $\SL_2(\C)$-monodromy representations, 
is given by the action of moving $t$ along loops in $\CP^1\sminus\{0,1,\infty\}$ 
while keeping the representation constant.
When $t$ returns to the initial position $t_0$, the loops generating $\pi_1(\CP^1\sminus\{0,t,1,\infty\},z_0)$
will not be the same as before. It induces an automorphism of the fundamental group through which it acts on the space of monodromy representations.
The movement of $t$ can be also seen as an action of the pure-braid group $\Cal P_3$ on three strands $(0,t,1)$,
generated by the pure braids $\beta_{0t}^2, \beta_{t1}^2\in\Cal P_3$ (Figure \ref{figure:wm-purebraids}).
\begin{figure}[h!]
\begin{center}
\begin{tikzpicture} [scale=0.25] 
 \draw (-3,1.5) node {$\beta_{0t}^2:$};
 \draw (0,-1) node {$0$};
 \draw (2,-1) node {$t$};
 \draw (4,-1) node {$1$};
 \draw (0,4) node {$0$};
 \draw (2,4) node {$t$};
 \draw (4,4) node {$1$};
 \draw (-0.6,1.5) .. controls +(0,-1) and +(0,1) .. (2,0);
 \draw[white,line width=4pt] (0,1.5) .. controls +(0,-1) and +(0,1) .. (0,0);
 \draw (0,1.5) .. controls +(0,-1) and +(0,1) .. (0,0);
 \draw[<-] (0,3) .. controls +(0,-1) and +(0,1) .. (0,1.5);
 \draw[white,line width=4pt] (2,3) .. controls +(0,-1) and +(0,1) .. (-0.6,1.5);
 \draw[<-] (2,3) .. controls +(0,-1) and +(0,1) .. (-0.6,1.5);
 \draw[<-] (4,3) -- (4,0);
\end{tikzpicture}\qquad\qquad
\begin{tikzpicture} [scale=0.25] 
 \draw (-3,1.5) node {$\beta_{t1}^2:$};
 \draw (0,-1) node {$0$};
 \draw (2,-1) node {$t$};
 \draw (4,-1) node {$1$};
 \draw (0,4) node {$0$};
 \draw (2,4) node {$t$};
 \draw (4,4) node {$1$};

 \draw (4,1.5) .. controls +(0,-1) and +(0,1) .. (4,0);
  \draw[white,line width=4pt] (4.6,1.5) .. controls +(0,-1) and +(0,1) .. (2,0);
  \draw (4.6,1.5) .. controls +(0,-1) and +(0,1) .. (2,0);
  \draw[<-] (2,3) .. controls +(0,-1) and +(0,1) .. (4.6,1.5);
  \draw[white,line width=4pt] (4,3) .. controls +(0,-1) and +(0,1) .. (4,1.5);
  \draw[<-] (4,3) .. controls +(0,-1) and +(0,1) .. (4,1.5);
  \draw[<-] (0,3) -- (0,0);
\end{tikzpicture}
\end{center}
\vskip-12pt\caption{Elementary pure braids.}
\label{figure:wm-purebraids} 
\end{figure}
These actions were considered by Dubrovin \cite{Dub96,Dub99} and described in detail by Dubrovin and Mazzocco \cite{DM, Maz01} and Iwasaki \cite{I},
and their dynamics was further studied by Cantat \& Loray \cite{CL} in connection to the problem of transcendentness of $P_{VI}$.

It is advantageous to consider the elementary monodromy actions as square iterates of ``half-monodromy'' actions.
That way one considers the action of the whole braid group $\Cal B_3$ on three strands $(0,t,1)$, generated by two braids  $\beta_{0t}$, $\beta_{t1}$
(Figure \ref{figure:wm-braids}). 
However such ``half-monodromy'' actions don't act on the given Painlev\'e foliation, but rather on a whole set of such foliations since they change the parameter $\vartheta$ by permutation of its components.
As such there are different ways how to define such actions and how to interpret them. 

Instead of the four singularities $(0,t,1,\infty)$, let us consider an ordered quadruple of distinct points 
$(t_i,t_j,t_l,t_m)$ in $\CP^1$, where $(i,j,l,m)$ is a cyclic permutation of $(0,t,1,\infty)$.
The action of the braid $\beta_{ij}$ consists in turning the two  points $t_i,t_j$, around each other by a half turn while fixing $t_l,t_m$.
\begin{figure}[h!]
\vskip-6pt
\begin{center}
\begin{tikzpicture} [scale=0.25] 
	\draw (-3,1.5) node {$\beta_{ij}:$};
	\draw (0,-1) node {$t_i$};
	\draw (2,-1) node {$t_j$};
	\draw (4,-1) node {$t_l$};
	\draw (6,-1) node {$t_m$};
	\draw (0,4) node {$t_j'$};
	\draw (2,4) node {$t_i'$};
	\draw (4,4) node {$t_l'$};
	\draw (6,4) node {$t_m'$};
	\draw[<-] (0,3) .. controls +(0,-1) and +(0,1) .. (2,0);
	\draw[white,line width=4pt] (2,3) .. controls +(0,-1) and +(0,1) .. (0,0);
	\draw[<-] (2,3) .. controls +(0,-1) and +(0,1) .. (0,0);
	\draw[<-] (4,3) -- (4,0);
	\draw[<-] (6,3) -- (6,0);
\end{tikzpicture}
\end{center}
\vskip-12pt\caption{Elementary braid $\beta_{ij}$, $(i,j,l,m)$ is a cyclic permutation of $(0,t,1,\infty)$.}
\label{figure:wm-braids} 
\end{figure}

The braids act on the fundamental group $\pi_1(\CP^1\sminus\{t_i,t_j,t_l,t_m\},z_0)$ by transforming the loops, \
$\beta:\gamma\mapsto\gamma^\beta,$ 

\hbox{\hskip0.1\textwidth
\vtop{\begin{tikzpicture} [scale=0.25]
	
	\node at (-5,0)[circle,fill,inner sep=1pt]{}; \draw (-5,-1) node {$t_i$};
	\node at (-3,0)[circle,fill,inner sep=1pt]{}; \draw (-3,-1) node {$t_j$}; 
	\node at (-1,0)[circle,fill,inner sep=1pt]{}; \draw (-1,-1) node {$t_l$};
	\node at (1,0)[circle,fill,inner sep=1pt]{}; \draw (1,-1) node {$t_m$}; 
	\draw (-2,6) node {$z_0$};  \draw[-] (-5,0) -- (-2,5); \draw[-] (-3,0) -- (-2,5);  \draw[-] (-1,0) -- (-2,5); \draw[-] (1,0) -- (-2,5);
	\draw[->] (3,2) -- (6,2); \draw (4.5,3) node {$\beta_{ij}$};
	\node at (9,0)[circle,fill,inner sep=1pt]{}; \draw (9,-1) node {$t_j'$};
    \node at (11,0)[circle,fill,inner sep=1pt]{}; \draw (11.5,-1) node {$t_i'$}; 
   	\node at (13,0)[circle,fill,inner sep=1pt]{}; \draw (13,-1) node {$t_l'$};
    \node at (15,0)[circle,fill,inner sep=1pt]{}; \draw (15,-1) node {$t_m'$}; 
	\draw (12,6) node {$z_0$}; 
	\draw[-] (9,0) -- (12,5); \draw[-] (7.5,0) .. controls +(0,1.5) and +(-1,-1) .. (12,5); \draw[-] (11,0) .. controls +(0,-3) and +(0,-3) .. (7.5,0);
	\draw[-] (13,0) -- (12,5); \draw[-] (15,0) -- (12,5);
	\end{tikzpicture}}	
\hskip-0.9\textwidth	
\vbox{\begin{equation*}
	\begin{aligned}
	\beta_{ij}:\
	\gamma_i &\mapsto \gamma_i'=\gamma_i\gamma_j\gamma_i^{-1} ,\qquad\\
	\gamma_j &\mapsto \gamma_j'=\gamma_i,\\
	\gamma_l &\mapsto \gamma_l'=\gamma_l,\\
	\gamma_m &\mapsto \gamma_m'=\gamma_m,\\
	\end{aligned}
	\end{equation*}}
}
\noindent
(in the above picture we draw just the connecting paths from $z_0$ to the singularities $i,j$ that are encircled by the loops $\gamma_i$, $\gamma_j$),
preserving the relation $\gamma_i\gamma_j\gamma_l\gamma_m=\id$.

This in turn induces an action $\beta_*:\rho\mapsto\rho^\beta$ on the monodromy representation \eqref{eq:PV-rho} defined by $\quad \rho^\beta(\gamma^\beta)=\rho(\gamma)$,
and satisfying
$(\beta\beta')_*=\beta_*\circ\beta'_*:\rho\mapsto\rho^{\beta\beta'}=(\rho^{\beta})^{\beta'}$
is given by
\begin{equation}\label{eq:wm-beta}
\begin{aligned}
(\beta_{ij})_*:\
M_i &\mapsto \rho^{\beta_{ij}}(\gamma_i)=M_j , &\qquad\qquad
	(\beta_{ji})_*=(\beta_{ij}^{-1})_*:\ M_i &\mapsto M_i^{-1}M_jM_i,\\ 
M_j &\mapsto \rho^{\beta_{ij}}(\gamma_j)=M_jM_{i}M_j^{-1},&
	M_j &\mapsto M_i,\\
M_l &\mapsto \rho^{\beta_{ij}}(\gamma_l)= M_l,&
	M_l &\mapsto  M_l,\\
M_m &\mapsto \rho^{\beta_{ij}}(\gamma_m)= M_m,&
	M_m &\mapsto M_m.
\end{aligned}
\end{equation}
%

\emph{Then the pure braid actions of the two square iterates  $\beta_{ij}^{\circ 2}$ and $\beta_{lm}^{\circ 2}$
are conjugated, and therefore  induce the same action on the character variety.}
However the braid actions of $\beta_{ij}$ and $\beta_{lm}$ are different.

Now for either of $\{i,j\}=\begin{cases}\{0,t\}\\\{t,1\}\end{cases}$ let $k$ be the third finite index, $\{i,j,k\}=\{0,t,1\}$.
Then the map induced by $\beta_{ij}$ between the character varieties is given by
\begin{equation}\label{eq:wm-gij}
\begin{aligned}
g_{ij}:\ \Cal S_{VI}(\theta)&\to \Cal S_{VI}\big(g_{ij}(\theta)\big),\hskip-36pt&&&&\\[4pt]
a_i&\mapsto a_j, \quad& X_i&\mapsto X_j-F_{\!X_j},    \quad& F_{\!X_i}&\mapsto -F_{\!X_j}, \\
a_j&\mapsto a_i, \quad& X_j&\mapsto X_i,\quad& F_{\!X_j}&\mapsto F_{\!X_i}-F_{\!X_j}X_k, \\
a_k&\mapsto a_k, \quad& X_k&\mapsto X_k,	\quad& F_{\!X_k}&\mapsto F_{\!X_k}-F_{\!X_j}X_i,\\
a_\infty&\mapsto a_\infty, &&&&
\end{aligned}
\end{equation}
and hence $(\theta_i,\theta_j,\theta_k,\theta_\infty)\mapsto(\theta_j,\theta_i,\theta_k,\theta_\infty)$ where $\theta$ are the coefficients in \eqref{eq:wm-F}.
Moreover in this notation:
\[g_{ij}^{\circ -1}=g_{ji}.\]

On the other hand for either of $\{l,m\}=\begin{cases}\{1,\infty\}\\\{\infty,0\}\end{cases}$ let $k=\begin{cases}1\\ 0\end{cases}$ be the finite of them, and
$\{i,j\}=\begin{cases}\{0,t\}\\\{t,1\}\end{cases}$ the remainig two indices.
Then the induced map by $\beta_{lm}$ between the character varieties is given by
\begin{equation}\label{eq:wm-glm}
	\begin{aligned}
		g_{lm}:\ \Cal S_{VI}(\theta)&\to \Cal S_{VI}\big(g_{lm}(\theta)\big),\hskip-36pt&&&&\\[4pt]
		a_i&\mapsto a_i, \quad& X_i&\mapsto X_j-F_{\!X_j},    \quad& F_{\!X_i}&\mapsto -F_{\!X_j}, \\
		a_j&\mapsto a_j, \quad& X_j&\mapsto X_i,\quad& F_{\!X_j}&\mapsto F_{\!X_i}-F_{\!X_j}X_k, \\
		a_l&\mapsto a_m, \quad& X_k&\mapsto X_k,	\quad& F_{\!X_k}&\mapsto F_{\!X_k}-F_{\!X_j}X_i,\\
		a_m&\mapsto a_l, &&&&
	\end{aligned}
\end{equation}
and hence $(\theta_i,\theta_j,\theta_k,\theta_\infty)\mapsto(\theta_j,\theta_i,\theta_k,\theta_\infty)$.
It is the same as $g_{ij}$ \eqref{eq:wm-gij} except for the way it acts on $a$, where it differs from $g_{ij}$  by composition with the involutive permutation \[(a_i,a_j,a_l,a_m)\mapsto (a_j,a_i,a_m,a_l)\] 
which preserves $\theta(a)$ and commutes with both $g_{ij}$ and $g_{lm}$. 
This subtle difference between $g_{ij}$ \eqref{eq:wm-gij} and $g_{lm}$ \eqref{eq:wm-glm} will be amplified during the confluence.
Nevertheless, the square iterates of both actions are equal, 
\[g_{0t}^{\circ 2}=g_{1\infty}^{\circ 2}, \qquad g_{t1}^{\circ 2}=g_{\infty0}^{\circ 2}.\]

Namely (for the sake of reference):
\begin{equation}\label{eq:wm-ginfty0}
	\begin{aligned}
		g_{t1},\, g_{\infty 0}:\ 		
		\theta_0&\mapsto \theta_0, \quad& X_0&\mapsto X_0,	\quad& F_{\!X_0}&\mapsto F_{\!X_0}-F_{\!X_1}X_t,\\
		\theta_t&\mapsto \theta_1, \quad& X_t&\mapsto X_1-F_{\!X_1},    \quad& F_{\!X_t}&\mapsto -F_{\!X_1}, \\
		\theta_1&\mapsto \theta_t, \quad& X_1&\mapsto X_t,\quad& F_{\!X_1}&\mapsto F_{\!X_t}-F_{\!X_1}X_0, \\
		\theta_\infty&\mapsto \theta_\infty, &&&&
	\end{aligned}
\end{equation}
$g_{t1}: a\mapsto (a_0,a_1,a_t,a_\infty)$, $g_{\infty0}: a\mapsto (a_\infty,a_t,a_1,a_0)$.


\goodbreak

\begin{proposition}[Dubrovin \& Mazzocco \cite{DM}, Iwasaki \cite{I,I2}]\label{prop:wm-modular}~
\begin{enumerate}[wide=0pt, leftmargin=\parindent]
\item  
For any permutation $(i,j,k)$ of $(0,t,1)$ the above \emph{half-monodromy} actions $g_{ij}:\Cal S_{VI}(\theta)\to \Cal S_{VI}\big(g_{ij}(\theta)\big)$
preserve the Fricke relation \eqref{eq:wm-Fricke}: $F\circ g_{ij}=F$, and therefore also the 2-form $\omega_{\Cal S_{VI}}$. 
The group $\Gamma=\langle g_{0t},g_{t1}\rangle$ generated by the actions of the braids $\beta_{0t}$ and $\beta_{t1}$ (generators of $\Cal B_3$)
is isomorphic to the \emph{modular group} $\PSL_2(\Z)$, with the standard generators
\begin{equation*}
S= g_{0t}^{\circ 2}\circ g_{t1}, \quad T=g_{0t}^{\circ(-1)}, \qquad\text{satisfying}\qquad S^{\circ 2}=\id=(T\circ S)^{\circ 3}. 
\end{equation*}

\item
The action of the \emph{monodromy group of $P_{VI}$} on the character variety $\Cal S_{VI}(\theta)$ is induced by the action of the pure braids $\beta_{0t}^2, \beta_{t1}^2\in\Cal P_3$ on the monodromy representations.
It is isomorphic to the  \emph{principal congruence subgroup} of the modular group
\[\Gamma(2)=\langle g_{0t}^{\circ 2},g_{t1}^{\circ 2}\rangle\subseteq\Aut_{\omega_{\Cal S_{VI}}}(\Cal S_{VI}(\theta)),\]
where
\begin{equation}\label{eq:wm-gij2}
\begin{aligned}
	g_{ij}^{\circ 2}:	  X_i&\mapsto X_i-F_{\!X_i}+X_kF_{\!X_j},	\ & 
	\scriptstyle F_{\!X_i}&\begin{scriptstyle}\mapsto -F_{\!X_i}+X_kF_{\!X_j}\end{scriptstyle}, \\
	X_j&\mapsto X_j-F_{\!X_j},		\quad& 
	\scriptstyle F_{\!X_j}&\begin{scriptstyle}\mapsto -F_{\!X_j}-X_kF_{\!X_i}+X_k^2F_{\!X_j}\end{scriptstyle}, \\
	X_k&\mapsto X_k,    			\quad& 
	\scriptstyle F_{\!X_k}&\begin{scriptstyle}\mapsto F_{\!X_k}-X_iF_{\!X_j}-X_jF_{\!X_i}+F_{\!X_j}F_{\!X_i}+X_kX_jF_{\!X_j}-X_kF_{\!X_j}^2\end{scriptstyle}.\hskip-12pt
\end{aligned}	
\end{equation}

while preserving the parameter $a=(a_0,a_t,a_1,a_\infty)$.

The fixed points of this $\Gamma(2)$-action are exactly the singularities of $\Cal S_{VI}$, and
its restriction on the smooth locus of $\Cal S_{VI}(\theta)$ represents faithfully the nonlinear monodromy action on the non-Riccati
locus of space of initial conditions $\Cal M_{VI,t_0}(\vartheta)$ (i.e. on the initial conditions corresponding to non-Riccati solutions of $P_{VI}$). 
\end{enumerate}
\end{proposition}

As $t$ varies in $\CP^1\sminus\{0,1,\infty\}$ the character varieties define a fibration above $\CP^1\sminus\{0,1,\infty\}$ with fibers isomorphic $\Cal S_{VI}(\theta)$, for which the coordinates $X=(X_0,X_t,X_1)$ \eqref{eq:wm-X} are local trivializations.
The transformation maps $g_{0t}^{\circ2}$, $g_{t1}^{\circ2}$ are then the gluing maps when $t$ makes a round around $0$ or $1$, see Figure~\ref{figure:wm-glueing}.

\begin{figure}[t]
	\centering
	\begin{tikzpicture} [scale=0.25]
	\draw[dashed, ultra thick] (-20,0) -- (-6,0);
	\node at (-6,0)[circle,fill,inner sep=2pt]{}; \draw (-6,-1.5) node {$t\!=\!0$};
	\draw[->, thick] (-16.5,-2) .. controls +(-0.5,1) and +(-0.5,-1) .. (-16.5,2); \draw (-18,-1.5) node {$g_{0t}^{\circ 2}$};
	\draw[dashed, ultra thick] (6,0) -- (20,0);
	\node at (6,0)[circle,fill,inner sep=2pt]{}; \draw (6,-1.5) node {$t\!=\!1$};
	\draw[->, thick] (16.5,2) .. controls +(0.5,-1) and +(0.5,1) .. (16.5,-2); \draw (18.2,1.5) node {$g_{t1}^{\circ 2}$};
	
	\begin{scope}[shift={(0,2)}]
	\node at (-2,0)[circle,fill,inner sep=1pt]{}; \draw (-2,-1) node {$0$};
	\node at (0,0)[circle,fill,inner sep=1pt]{}; \draw (0,-1) node {$t$}; 
	\node at (2,0)[circle,fill,inner sep=1pt]{}; \draw (2,-1) node {$1$};
	\draw (0,3.6) node {$z_0$};  \draw[-] (-2,0) -- (0,3); \draw[-] (0,0) -- (0,3); \draw[-] (2,0) -- (0,3);
	\end{scope}
	
	\begin{scope}[shift={(13,-7)}]
	\node at (-2,0)[circle,fill,inner sep=1pt]{}; \draw (-2,-1) node {$0$};
	\node at (0,0)[circle,fill,inner sep=1pt]{}; \draw (0,-1) node {$1$};
	\node at (2,0)[circle,fill,inner sep=1pt]{}; \draw (2.5,-1) node {$t$}; 
	\draw (0,3.6) node {$z_0$}; 
	\draw[-] (-2,0) -- (0,3); 
	\draw[-] (0,0) -- (0,3); 
	\draw[-] (-1,0) .. controls +(1,3) and +(-1,-3) .. (0,3); \draw[-] (2,0) .. controls +(0,-3) and +(-1,-3) .. (-1,0);
	\end{scope}
	
	\begin{scope}[shift={(-13,-7)}]
	\node at (-2,0)[circle,fill,inner sep=1pt]{}; \draw (-2.5,-1) node {$t$}; 
	\node at (0,0)[circle,fill,inner sep=1pt]{}; \draw (0,-1) node {$0$};
	\node at (2,0)[circle,fill,inner sep=1pt]{}; \draw (2,-1) node {$1$};
	\draw (0,3.6) node {$z_0$}; 
	\draw[-] (2,0) -- (0,3); 
	\draw[-] (0,0) -- (0,3); 
	\draw[-] (1,0) .. controls +(-1,3) and +(1,-3) .. (0,3); \draw[-] (-2,0) .. controls +(0,-3) and +(1,-3) .. (1,0);
	\end{scope}
	
	\begin{scope}[shift={(-13,5)}]
	\node at (-2,0)[circle,fill,inner sep=1pt]{}; \draw (-2,-1) node {$t$};
	\node at (0,0)[circle,fill,inner sep=1pt]{}; \draw (0.2,-1) node {$0$}; 
	\node at (2,0)[circle,fill,inner sep=1pt]{}; \draw (2,-1) node {$1$};
	\draw (0,3.6) node {$z_0$}; 
	\draw[-] (2,0) -- (0,3); 
	\draw[-] (-2,0) -- (0,3); 
	\draw[-] (-3.5,0) .. controls +(0.5,1) and +(-2,-1.5) .. (0,3); \draw[-] (0,0) .. controls +(-1,-3) and +(-1,-2) .. (-3.5,0);
	\end{scope}
	
	\begin{scope}[shift={(13,5)}]
	\node at (-2,0)[circle,fill,inner sep=1pt]{}; \draw (-2,-1) node {$0$};
	\node at (0,0)[circle,fill,inner sep=1pt]{}; \draw (-0.2,-1) node {$1$}; 
	\node at (2,0)[circle,fill,inner sep=1pt]{}; \draw (2,-1) node {$t$};
	\draw (0,3.6) node {$z_0$}; 
	\draw[-] (-2,0) -- (0,3); 
	\draw[-] (2,0) -- (0,3); 
	\draw[-] (3.5,0) .. controls +(-0.5,1) and +(2,-1.5) .. (0,3); \draw[-] (0,0) .. controls +(1,-3) and +(1,-2) .. (3.5,0);
	\end{scope}
	\end{tikzpicture}
	\caption{The paths around which are taken the loops $\gamma_0,\gamma_t,\gamma_1$ defining the monodromy representation $\rho$ 
	and therefore the coordinate $X$ on $\Cal S_{VI}(\theta)$ in dependence on $t\in \CP^1\sminus\{0,1,\infty\}$, and the corresponding transition maps $g_{0t}^{\circ2},\ g_{t1}^{\circ2}:\Cal S_{VI}(\theta)\to \Cal S_{VI}(\theta)$.}
	\label{figure:wm-glueing}
\end{figure}

\section{The confluence $P_{VI}\to P_{V}$ and the character varieties}\label{section:wm-2}

 We will study the degeneration of the $\SL_2(\C)$-isomonodromic problem \eqref{eq:wm-A} of $P_{VI}$ to the one of $P_V$ \cite{JM} using the description of confluence in linear systems by Hurtubise, Lambert \& Rousseau \cite{LR,HLR} in order to understand the degeneration of the character variety of $P_{VI}$ to the wild character variety of $P_V$. 
 The goal is to be able to consider the sequential limits \eqref{eq:wm-epsilon_n} in the parameter $\epsilon$  
 of the braid group actions on the character varieties of $P_{VI}$, 
 in order to apply the results of Section~\ref{section:wm-1a2} on their accumulation to the generators of the wild monodromy pseudogroup of $P_V$, which will be done in Section~\ref{section:wm-2wild}.

The wild character variety of $P_V$ in the form of a generalized Fricke formula was constructed by van der Put \& Saito \cite{PS}, as well as by Chekhov, Mazzocco \& Rubtsov \cite{CMR15} who also describe the confluence but from a very different point of view. 
The idea of sequential limits (discretization) in the parameter $\epsilon$ from $P_{VI}$ to $P_V$ was previously exploited by Kitaev \cite{Kit} in relation to the asymptotics of the corresponding Riemann--Hilbert problem and the tau function. But our approach is different.

\subsection{Confluence of isomonodromic systems}

The substitution
\begin{equation}\label{eq:wm-tt}
t=1+\epsilon\tilde t,\quad 
\vartheta_t= \tfrac{1}{\epsilon},
\quad \vartheta_1= -\tfrac{1}{\epsilon}+\tilde\vartheta_1, 
\end{equation}  
in the system \eqref{eq:wm-A} with
\begin{equation}\label{eq:wm-tildev1}
\begin{aligned}
v_t&=\tfrac{\tilde v_1}{\epsilon\tilde t},& v_1&=-\tfrac{\tilde v_1}{\epsilon\tilde t}-v_0+\kappa_2,\\
u_t&=\tilde u_1, & u_1&=\tilde u_1+\epsilon\tilde t\tfrac{u_0v_0-\tilde u_1(v_0-\kappa_2)}{\tilde v_1+\epsilon\tilde t(v_0-\kappa_2)},
\end{aligned}
\quad\text{where}\quad \kappa_2=-\tfrac{\vartheta_0+\tilde\vartheta_1+\vartheta_\infty}{2},		
\end{equation}
gives a parametric family (depending on the parameter $\epsilon$) of isomonodromic deformations:
\begin{equation}\label{eq:wm-tildeA}
\frac{d\phi}{d z}=\Big[\frac{\tilde A_0(\tilde t)}{z}+
\frac{\tilde A_1^{(0)}(\tilde t)+(z-\!1\!-\!\epsilon\tilde t)\tilde A_1^{(1)}(\tilde t)}
{(z-\!1)(z-\!1\!-\!\epsilon\tilde t)}\Big]\phi,
\end{equation}
where
\begin{align*}
\tilde A_0&=A_0,\\
\tilde A_1^{(0)}&=\epsilon\tilde tA_t=\mbox{\scriptsize$\begin{pmatrix} \tilde v_1+\tfrac{\tilde t}{2}\!\!\! &  -\tilde u_1\tilde v_1 \\[3pt] 
\tfrac{\tilde v_1+t}{\tilde u_1} & \! -\tilde v_1-\tfrac{\tilde t}{2}\end{pmatrix}$},\\
\tilde A_1^{(1)}&=A_t+A_1=-A_0-A_\infty=\mbox{\scriptsize$\begin{pmatrix} -v_0-\tfrac{\vartheta_0+\vartheta_\infty}{2} \!\!\! &  u_0v_0\\[3pt]  -\tfrac{v_0+\vartheta_0}{u_0} &  \!\!\! v_0+\tfrac{\vartheta_0+\vartheta_\infty}{2}\end{pmatrix}$},
\end{align*}
which then have well defined limits when $\epsilon\to 0$.
The matrix $\tilde A_1^{(0)}$ has eigenvalues  $\pm\frac{\tilde t}{2}$,
therefore the matrix function $\frac{\tilde A_1^{(0)}(\tilde t)+(z-\!1\!-\!\epsilon\tilde t)\tilde A_1^{(1)}(\tilde t)}
{(z-\!1)(z-\!1\!-\!\epsilon\tilde t)}$ 
can be diagonnalized on a uniform neighborhood of $z=1$ for $\epsilon$ small (for fixed $\tilde t\neq 0$),
with eigenvalues
\begin{equation}\label{eq:wm-formalinvariants}
\pm\tfrac12\left(\tfrac{\vartheta_1}{z-1}+\tfrac{\vartheta_t}{z-t}\right)+O(1)=\pm\tfrac{\tilde t+\tilde\vartheta_1(z-1-\epsilon\tilde t)}{2(z-1)(z-1-\epsilon\tilde t)}+O(1),
\end{equation}
which form the set of local formal invariants for the confluent pair of singularities $\{1,1+\epsilon\tilde t\}$.

The connection \eqref{eq:wm-connection} becomes
\[d-\tilde A(z,t)dz+\tfrac{\tilde A_1^{(0)}(t)}{\tilde t(z-1-\epsilon\tilde t)}d\tilde t,\]
the flatness of which is the isomonodromicity condition on \eqref{eq:wm-tildeA}. 
The variables $(q,p)$ of the confluent Painlev\'e system \eqref{eq:wm-PVIepsilon} are defined as before by
\[q=-(1+\epsilon\tilde t)\frac{[\tilde A_0]_{12}}{[\tilde A_1^0]_{12}+[\tilde A_1^1]_{12}},\qquad 
p=[\tilde A(q,t)]_{11}+\tfrac{\vartheta_0}{2q} +\tfrac{\tilde t}{2(q-1)(q-1-\epsilon\tilde t)}+\tfrac{\tilde\vartheta_1}{2(q-1)}.\]

\subsection{Confluence on character varieties}
The confluence of singularities in linear systems has been studied by many authors, including Garnier \cite{Gar}, Ramis \cite{Ra1}, Sch\"{a}fke \cite{Sch2}, Duval \cite{Duv}, Glutsyuk \cite{Gl,Gl1}, Zhang \cite{Zh}, etc..
Here we will use a description following from a theorem of sectorial normalization for unfolding of non-resonant irregular singularities due to Hurtubise, Lambert and Rousseau \cite{LR, HLR}, and in the case of Poincaré rank 1 also due to Parise \cite{Pa}.

The local analytic invariants of the limit system \eqref{eq:wm-tildeA} with $\epsilon=0$ are usually expressed in terms of a pair of Stokes matrices.
The work \cite{LR, HLR} shows that one can define ``unfolded Stokes matrices'' also for small $\epsilon\neq 0$.  
More precisely, for $\epsilon$ in each of the sectors $\sE_+,\sE_-$ \eqref{eq:wm-sE} (the same sectors as those for normalization of the confluent family of nonlinear Painlev\'e equations!), the system \eqref{eq:wm-tildeA} has certain privileged fundamental solution matrix with respect to which the two monodromies $M_t,M_1$ are one upper triangular and the other lower triangular.
This allows to decompose each of the matrices $M_t,M_1$ as a product of a diagonal ``formal monodromy matrix'' (with divergent limit $\epsilon\to 0
$) and of a unipotent ``unfolded Stokes matrix'' (with convergent limit $\epsilon\to 0
$).

In fact the two columns of this privileged fundamental solution matrix form a \emph{mixed basis of solutions} (cf. Remark~\ref{remark:wm-mixedbasis}):
the first column, which is an ``eigen-solution'' for the eigenvalue $e_t$ of $M_t$,
is characterized as a \emph{subdominant solution} which vanishes when $z$ approaches $t=1+\epsilon\tilde t$ along some suitable curve, while
the second column, which is an ``eigen-solution'' for the eigenvalue $e_1$ of $M_1$, is a \emph{subdominant solution} which vanishes when $z$ approaches $1$ along some suitable curve.
Here the suitable curves of approach of the singular points $1+\epsilon\tilde t$ and $1$ are the real time trajectories of the vector field
\[\frac{dz}{d\tilde t}=e^{i\omega_\pm}\tfrac{(z-1)(z-1-\epsilon\tilde t)}{\tilde t+\tilde\vartheta_1(z-1-\epsilon\tilde t)},\quad \tilde t\in\R,\]
or equivalently, leaves of the \emph{horizontal foliation} of the meromorphic differential form
$e^{-i\omega_\pm}\tfrac{\tilde t+\tilde\vartheta_1(z-1-\epsilon\tilde t)}{(z-1)(z-1-\epsilon\tilde t)}dz$ which expresses the formal invariants \eqref{eq:wm-formalinvariants},
where $\omega_\pm$ is as in \eqref{eq:wm-omegapm} (in particular, for $\epsilon\in\{|\arg(\pm\epsilon)|<\frac{\pi}{2}-\eta\}\subset\sE_\pm$ one can take $\omega_\pm=0$).
When appropriately normalized, such mixed solution basis has a well-defined limit when $\epsilon\to 0$, $\epsilon\in\sE_\pm$, which is the canonical sectorial basis of solutions at the limit irregular singularity. 

In order to simplify the description we will consider only the confluence in the sector $\epsilon\in\sE_+$.
Then for $0\neq\epsilon$ we have a monodromy representation 
\[\rho_+:\pi_1\left(\C\sminus\{0,1+\epsilon\tilde t,1\},z_0\right)\to\SL_2(\C)\] 
with respect to this privileged mixed basis of solutions which is of the form: 
\begin{equation}\label{eq:wm-confluentM}
\begin{aligned}
M_{0+}&=\left(\begin{smallmatrix} \alpha & \beta \\ \gamma & \delta \end{smallmatrix}\right), \\
M_{t+}&=N_tS_{2+}=\left(\begin{smallmatrix} e_t & e_ts_{2+} \\ 0 & \frac{1}{e_t} \end{smallmatrix}\right),\\
M_{1+}&=S_{1+}N_1=\left(\begin{smallmatrix} e_1 & 0 \\ e_1s_{1+} & \frac{1}{e_1} \end{smallmatrix}\right),\\
M_{\infty+}&=(S_{1+}N_1N_tS_{2+}M_{0+})^{-1}=
\left(\begin{smallmatrix} e_te_1(\beta s_{1+}+\delta s_{1+}s_{2+})+\frac{\delta_+}{e_te_1}, & -e_te_1(\beta+\delta s_{2+}) \\ 
	-e_te_1(\alpha s_{1+}+\gamma s_{1+}s_{2+})-\frac{\gamma}{e_te_1}, & e_te_1(\alpha+\gamma s_{2+}) \end{smallmatrix}\right),
\end{aligned}
\end{equation}
see Figure~\ref{figure:wm-confluentrho}, where 
\[S_{1+}(\epsilon)=\left(\begin{smallmatrix} 1 & 0 \\[3pt] s_{1+}(\epsilon) & 1 \end{smallmatrix}\right), \ S_{2+}(\epsilon)=\left(\begin{smallmatrix} 1 & s_{2+}(\epsilon) \\[3pt] 0 & 1 \end{smallmatrix}\right),\]
are \emph{unfolded Stokes matrices} of the family \eqref{eq:wm-tildeA}, which tend to the \emph{Stokes matrices} of the limit system $\lim_{\sE_+\ni\epsilon\to 0}S_{i+}(\epsilon)=S_{i}(0)$, $i=1,2$,
and where
\[N_i(\epsilon)=\left(\begin{smallmatrix} e_i & 0 \\ 0 & \frac{1}{e_i} \end{smallmatrix}\right),\ \ i=t,1,\qquad
N=N_tN_1=\left(\begin{smallmatrix} e_te_1 & 0 \\ 0 & \frac{1}{e_te_1} \end{smallmatrix}\right),\]
with 
\[e_t=(M_{t+})_{11}=e^{\frac{\pi i}{\epsilon}},\quad e_1=(M_{1+})_{11}=e^{\pi i\tilde\vartheta_1-\frac{\pi i}{\epsilon}},\]
are the \emph{formal monodromy matrices} around the points $t=1+\epsilon t$ and $1$ for $\epsilon\neq 0$.
\emph{Such monodromy representation is determined uniquely up to conjugation by diagonal matrices.}

\begin{figure}[t]
\centering	
\includegraphics[width=.6\textwidth]{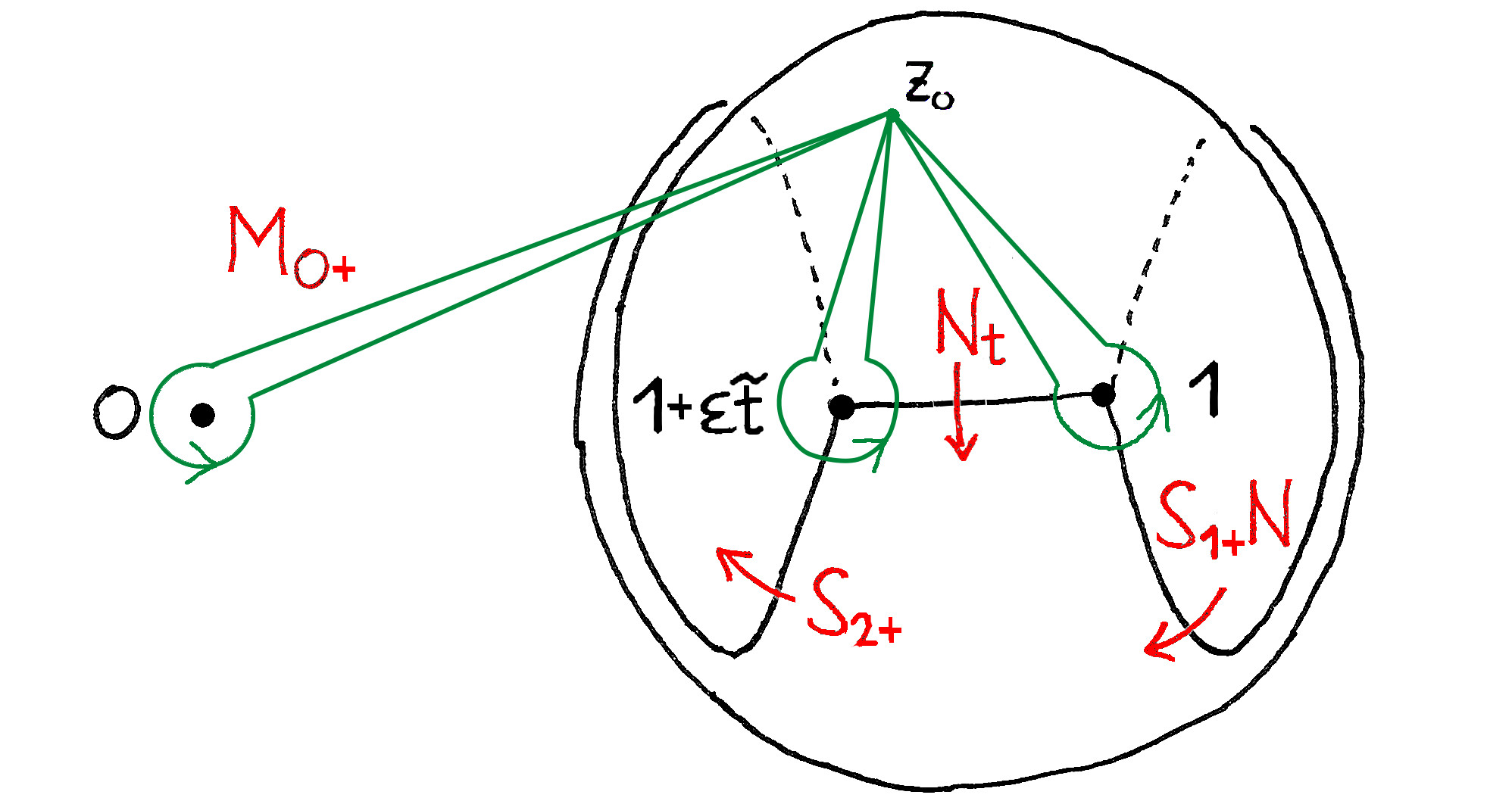}
\caption{Monodromy of the system \eqref{eq:wm-tildeA} when $\epsilon\to 0+$.}
\label{figure:wm-confluentrho}
\end{figure} 

The monodromy matrices are subject to the conditions
\begin{align*}
1&=\det\,M_{0+}=\alpha\delta-\gamma\beta,\\
a_0&=\tr\,M_{0+}=\alpha+\delta, \\
a_\infty&=\tr\,M_{\infty+}=\tfrac{\delta}{e_te_1}+e_te_1(\alpha+\beta s_{1+}+\gamma s_{2+}+\delta s_{1+}s_{2+}),\\
\tilde e_1:=e^{\pi i\tilde\vartheta_1}&=[M_{1+}M_{t+}]_{11}=e_te_1.
\end{align*}
The trace coordinates $X_i$ \eqref{eq:wm-X} for $0\neq\epsilon\in\sE_+$ are given by
\begin{equation}\label{eq:wm-XX}
\begin{aligned}
X_0&=\tr(M_{1+}M_{t+})=e_te_1+\tfrac{1}{e_te_1}+e_te_1s_{1+}s_{2+},\\
X_t&=\tr(M_{1+}M_{0+})=e_1(\alpha +\beta s_{1+})+\tfrac{\delta}{e_1},\\
X_1&=\tr(M_{0+}M_{t+})=e_t(\alpha +\gamma s_{2+})+\tfrac{\delta}{e_t},\\
\end{aligned}
\end{equation}
Only the parameters
\[a_0=2\cos(\pi\vartheta_0),\qquad \tilde e_1=e_te_1=e^{\pi i\tilde\vartheta_1},\qquad a_\infty=2\cos(\pi\vartheta_\infty),\]
have well defined limits when $\epsilon\to 0$, 
while $e_t=e^{\frac{\pi i}{\epsilon}}$ and $e_1=e^{\pi i\tilde\vartheta_1-\frac{\pi i}{\epsilon}}$ diverge.
Therefore the coordinate $X_0$ passes well to the limit, but not $X_t,X_1$ which need be replaced by new coordinates.
The new coordinates should be functions of $M_{0+},S_{1+},S_{2+},N,M_{\infty+}$ invariant by diagonal conjugation
(since the decomposition \eqref{eq:wm-confluentM} for $\epsilon\neq 0$, as well as the pair of Stokes matrices $S_{1+},S_{2+}$ for $\epsilon=0$, are determined only up to conjugation by diagonal matrices).
Following \cite{PS}, we choose them as the lower diagonal elements of $M_{\infty+}$ and $M_{0+}$:
\begin{equation}\label{eq:wm-UW}
\begin{aligned}
\pXone&=\tr (M_{1+}M_{t+0})=X_0,\\
\pXzero&=[M_{0+}]_{22}=\delta,\\
\pXinf&=[M_{\infty+}]_{22}=e_te_1(\alpha +\gamma s_{2+}).
\end{aligned}
\end{equation}

A substitution in the identity 
\[(e_te_1)^2s_{1+}s_{2+}(\alpha\delta-\beta\gamma-1)=0,\]
gives the Fricke relation in the new coordinates $\pX=(\pXzero,\pXone,\pXinf)$
\[\pF(\pX,\ptheta(\tilde a))=0,\]
where
\begin{equation}\label{eq:wm-tildeF}
\pF(\pX,\ptheta):=\pXzero\pXone\pXinf+\pXzero^2+\pXinf^2-\pthetazero\pXzero-\pthetaone\pXone-\pthetainf\pXinf+\pthetat=0
\end{equation}
where $\ptheta=(\pthetazero,\pthetaone,\pthetainf,\pthetat)$ is a function of the parameter 
\[\tilde{a}=(a_0,\tilde e_1,a_\infty),\] 
independent of $\epsilon$,
\begin{equation}\label{eq:wm-tildetheta}
\pthetazero=a_0+\tilde e_1a_\infty,\quad
\pthetaone=\tilde e_1,\quad
\pthetainf=a_\infty+\tilde e_1a_0,\quad
\pthetat=1+\tilde e_1a_0 a_\infty+\tilde e_1^2.
\end{equation}
For $\epsilon=0$, the relation \eqref{eq:wm-tildeF} for the character variety of $P_V$ was derived in \cite[section 3.2]{PS}.

\begin{definition}
The \emph{wild character variety of $P_V$} is the complex cubic surface 
\begin{equation}
 \Cal S_{V}(\ptheta)=\{\pX\in\C^3: 
 \pF(\pX,\ptheta)=0\},
\end{equation}
where $\pF$ is \eqref{eq:wm-tildeF}, endowed with the algebraic symplectic form 
\begin{equation}\label{eq:wm-tildeomega}
\tilde\omega_{\Cal S_{V}}=\frac{d\pXzero\wedge d\pXone}{2\pi i\pF_{\!\pXinf}}=\frac{d\pXinf\wedge d\pXzero}{2\pi i\pF_{\!\pXone}}=\frac{d\pXone\wedge d\pXinf}{2\pi i\pF_{\!\pXzero}}.
\end{equation}
\end{definition}

Denote
\begin{align*}
\pF_{\!\pXone}&:=\tfrac{\partial\pF}{\partial \pXone}=\pXinf\pXzero-\pthetaone, \\
\pF_{\!\pXzero}&:=\tfrac{\partial\pF}{\partial \pXzero}=\pXone\pXinf+2\pXzero-\pthetazero, \\
\pF_{\!\pXinf}&:=\tfrac{\partial\pF}{\partial \pXinf}=\pXone\pXzero+2\pXinf-\pthetainf.
\end{align*}

The trace coordinates $X$ \eqref{eq:wm-XX} on the character variety
$\Cal S_{VI}(\theta)$ for $\epsilon\neq0$ and the new coordinates $\pX$ \eqref{eq:wm-UW} on the wild character variety 
$\Cal S_{V}(\ptheta)$ are related by the following birational transformations:

\begin{theorem}[$\epsilon\neq 0$]\label{theorem:wm-Phi}
The cubic surfaces $\Cal S_{VI}(\theta)$ and $\Cal S_{V}(\ptheta)$ are birationally equivalent through the change of variables
\[X=\Phi_+(\pX),\]
where
\begin{align*}
\Phi_+(\cdot,a_0,e_t,e_1,a_\infty): \Cal S_{V}(\ptheta)&\to \Cal S_{VI}(\theta)\\
\pX &\mapsto X 
\end{align*}
is given by
\begin{equation}\label{eq:wm-XXXUW}
\begin{aligned}
\Phi_+:\
X_0&=\pXone,& 
F_{\!X_0}\circ\Phi_+&= \tfrac{\pXzero}{e_t}\big(\tfrac{\pF_{\!\pXzero}}{e_1}-\tfrac{\pF_{\!\pXinf}}{e_t}\big)-\tfrac{\pF_{\!\pXone}}{e_te_1}(\pXone-\tfrac{e_t}{e_1}-\tfrac{e_1}{e_t}) 
\\
X_t&=\tfrac{\pXzero}{e_1}+\tfrac{\pXinf-\pF_{\!\pXinf}}{e_t}, &
F_{\!X_t}\circ\Phi_+&= \tfrac{\pF_{\!\pXzero}}{e_1}-\tfrac{\pF_{\!\pXinf}}{e_t}, 
\\
X_1&=\tfrac{\pXinf}{e_1}+\tfrac{\pXzero}{e_t}, &
F_{\!X_1}\circ\Phi_+&= \tfrac{\pF_{\!\pXzero}}{e_t}+\tfrac{\pF_{\!\pXinf}}{e_1} -\tfrac{\pXone\pF_{\!\pXinf}}{e_t}.
\end{aligned}
\end{equation}
The inverse map 
$\Phi_+^{\circ(-1)}: \Cal S_{VI}(\theta) \dashrightarrow \Cal S_{V}(\ptheta)$, $X\mapsto\pX_+$, 
is given by
\begin{equation}\label{eq:wm-UWXXX}
\begin{aligned}
\hskip-24pt\Phi_+^{\circ(-1)}\!:\ \pXone&=X_0,
\\
\pXzero&=-\frac{e_tX_t+e_1X_1-a_\infty-e_te_1a_0}{X_0-\tfrac{e_t}{e_1}-\tfrac{e_1}{e_t}}
=-\frac{e_te_1\big(X_0-\tfrac{e_t}{e_1}-\tfrac{e_1}{e_t}-F_{\!X_0}\big)}{e_1X_t+e_tX_1-a_0-e_te_1a_\infty},\hskip-24pt\\
\pXone&=X_0,\\
\pXinf&=
-\frac{e_1X_t+e_tX_1-a_0-e_te_1a_\infty-e_1F_{\!X_t}}{X_0-\tfrac{e_t}{e_1}-\tfrac{e_1}{e_t}},
\end{aligned}
\end{equation}
(cf. Proposition \ref{prop:wm-lines}), and is singular on the line:
\begin{equation}\label{eq:wm-lineL0}
L_0:=\{X_0=\tfrac{e_t}{e_1}+\tfrac{e_1}{e_t},\ \ e_1X_t+e_tX_1=\pthetazero\}.
\end{equation}
The two Fricke relations are related by
\[F\circ\Phi_+=\tfrac{-1}{e_te_1}(X_0-\tfrac{e_t}{e_1}-\tfrac{e_1}{e_t})\cdot \pF.\]
The restriction 
\[\Phi_+: \Cal S_{V}(\ptheta)\to \Cal S_{VI}(\theta)\sminus L_0\]
is an isomorphism.
The pull-back of the symplectic form $\omega_{\Cal S_{VI}}$ \eqref{eq:wm-Omega} by $\Phi_+$ is the symplectic form \eqref{eq:wm-tildeomega}.
\end{theorem}

\begin{proof} 
From \eqref{eq:wm-XX} and \eqref{eq:wm-UW} by a direct calculation. 
\end{proof}

\begin{remark}[$\epsilon\neq 0$]
	In the trace coordinates \eqref{eq:wm-a} on the space of monodromy representations, the eigenvalues $e_i$ and $\tfrac{1}{e_i}$ of $M_i$ are interchangeable.
	On the other hand in the above confluent description of the monodromy data 
	this is no longer true for $i=t,1$.
	Indeed, during the confluence we are fixing an appropriately normalized mixed basis of solutions 
	consisting of one ``eigen-solution'' for the eigenvalue $e_t$ of $M_t$ and another for $e_1$ of $M_1$.
	The general theory of confluence \cite{HLR} tells us that this mixed basis tends, when $\epsilon\to 0$ in the sector $\sE_+$, to the appropriate sectorial basis 
	at the irregular singularity of the limit system. This means that, for small enough $\epsilon$, the chosen mixed basis can never be degenerate.
	On the other hand, the singular line $L_0$ \eqref{eq:wm-lineL0} for $\epsilon\neq 0$ corresponds by Remark~\ref{remark:wm-mixedbasis} precisely to the monodromy representations for which our mixed basis degenerates, so there is no place for them in our confluent picture.
\end{remark}

\begin{remark}\label{remark:wm-I}
A very simple way to obtain the coordinates $\pX$ on the wild character variety $\Cal S_{V}(\ptheta)$ is by taking the following limit:
\begin{enumerate}[label=(\roman*), wide=0pt, leftmargin=\parindent]
\item When $\epsilon\to 0$ in a sector $\eta<\arg\epsilon<\pi-\eta$, $\eta>0$, then $e_t\to\infty$, $e_1\to 0$, hence $\frac{a_t}{a_1}\to \tilde e_1$,
	\begin{equation*}
	\Big(\frac{X_t}{a_1},X_0,	\frac{X_1}{a_1} \Big)\to \big(\pXzero,\pXone,\pXinf\big),\qquad
	\Big(\frac{\theta_t}{a_1},\frac{\theta_0}{a_1^2},\frac{\theta_1}{a_1},\frac{\theta_\infty}{a_1^2}\Big)\to 
	\big(\pthetazero,\pthetaone,\pthetainf,\pthetat\big),
	\end{equation*}
	and $\frac{1}{a_1^2}F(X,\theta)\to \pF(\pX,\ptheta)$.
\item When $\epsilon\to 0$ in a sector $-\pi+\eta<\arg\epsilon<-\eta$, $\eta>0$, then $e_t\to 0$, $e_1\to \infty$, hence $\frac{a_1}{a_t}\to \tilde e_1$,
	\begin{equation*}
	\Big(\frac{X_1}{a_t}, X_0, \frac{X_t-F_{\!X_t}}{a_t} \Big)\to \big(\pXzero,\pXone,\pXinf\big),\qquad
	\Big(\frac{\theta_1}{a_t}, \frac{\theta_0}{a_t^2}, \frac{\theta_t}{a_t}, \frac{\theta_\infty}{a_t^2}\Big)\to 
	\big(\pthetazero,\pthetaone,\pthetainf,\pthetat\big),
\end{equation*}
	$\frac{X_t}{a_t}\to \pXinf-\pF_{\!\pXinf}$, and $\frac{1}{a_t^2}F(X,\theta)\to \pF(\pX,\ptheta)$.
\end{enumerate}
These two limits are related by the action of the half-monodromy operator $g_{1t}=g_{t1}^{\circ(-1)}$ \eqref{eq:wm-ginfty0} on the left side (and identity on the right side).
This limiting procedure should be related to Glutsyuk's description of confluence \cite{Gl,Gl1} and has been well observed before (for example \cite{CMR15}).
Whenever it leads to a convergent limit it can be used as a simple way to obtain limit formulas. However 
it doesn't allow to treat the confluence when $\epsilon\to 0$ along the sequences $\{\epsilon_n\}_{n\in\N}$ with $\frac{1}{\epsilon_n}=\frac{1}{\epsilon_0}+n$,
that accumulate to $0$ tangent to $\R_{>0}$,
which is essential for us in order to obtain the generators of the wild monodromy.
\end{remark}

\subsection{The total monodromy action on $\Cal S_V(\ptheta)$}

Both of the two different half-monodromy operators $g_{t1}$ and $g_{\infty 0}$ \eqref{eq:wm-ginfty0}, satisfying $g_{t1}^{\circ 2}=g_{\infty 0}^{\circ 2}$, and acting on the space of monodromy representations
associated to $P_{VI}$, pass well to the confluent limit if properly interpreted.
However, the action $g_{\infty 0}$ (which differs from $g_{t1}$ in how it acts on $(a_0,a_t,a_1,a_\infty)$) seems to be the one that is better suited:
this has to do with the fact that its action is on the points $z=0,\infty$, thus away from where the confluence happens.

\begin{proposition}\label{prop:wm-gt1}
\begin{enumerate}[label=(\roman*), wide=0pt, leftmargin=\parindent]
\item	
The pullback of the half-monodromy operators $g_{ij}:\Cal S_{VI}(\theta)\to\Cal S_{VI}(g_{ij}(\theta))$, $(i,j)=(t,1),(\infty,0)$, and their inverses $g_{ji}=g_{ij}^{\circ(-1)}$, by the transformation $\Phi_+$ \eqref{eq:wm-UWXXX} is given by 
\[\tilde g_{ij}=\Phi_+^{\circ(-1)}\circ g_{ij}\circ\Phi_+:\Cal S_{V}(\ptheta)\to \Cal S_{V}(\tilde g_{ij}(\ptheta))\]
\begin{align*}
\tilde g_{t1}:\ 
a_0&\mapsto a_0,& 	
\pthetazero&\mapsto \tfrac{\pthetainf}{\pthetaone},&
\pXzero&\mapsto \tfrac{1}{\pthetaone}\big(\pXinf-\pF_{\!\pXinf}\big), & 
\scriptstyle\pF_{\!\pXzero}&\begin{scriptstyle}\mapsto -\tfrac{1}{\pthetaone}\pF_{\!\pXinf}\end{scriptstyle},\\
\tilde e_1 & \mapsto \tfrac{1}{\tilde e_1},  & 
\pthetaone&\mapsto \tfrac{1}{\pthetaone},&
\pXone&\mapsto \pXone,  &  
\scriptstyle\pF_{\!\pXone}&\begin{scriptstyle}\mapsto \pF_{\!\pXone}-\pF_{\!\pXinf}\pXzero\end{scriptstyle},\\
a_\infty&\mapsto a_\infty, & 
\pthetainf&\mapsto \tfrac{\pthetazero}{\pthetaone},&
\pXinf&\mapsto \tfrac{1}{\pthetaone}\pXzero,	& 
\scriptstyle\pF_{\!\pXinf}&\begin{scriptstyle}\mapsto \tfrac{1}{\pthetaone}\big(\pF_{\!\pXzero}-\pF_{\!\pXinf}\pXone\big)\end{scriptstyle},\\
&&
\pthetat&\mapsto \tfrac{\pthetat}{\pthetaone^2}.&
&&&
\end{align*}

\begin{align*}
\tilde g_{1t}:\ 
a_0&\mapsto a_0, & 
\pthetazero&\mapsto \tfrac{\pthetainf}{\pthetaone},&
\pXzero&\mapsto \tfrac{1}{\pthetaone}\pXinf,	& 
\scriptstyle\pF_{\!\pXzero}&\begin{scriptstyle}\mapsto \tfrac{1}{\pthetaone}\big(\pF_{\!\pXinf}-\pF_{\!\pXzero}\pXone\big)\end{scriptstyle},\\
\tilde e_1 & \mapsto \tfrac{1}{\tilde e_1}, & 
\pthetaone&\mapsto \tfrac{1}{\pthetaone},&
\pXone&\mapsto \pXone,  &  
\scriptstyle\pF_{\!\pXone}&\begin{scriptstyle}\mapsto \pF_{\!\pXone}-\pF_{\!\pXzero}\pXinf\end{scriptstyle},\\
a_\infty&\mapsto a_\infty, & 	
\pthetainf&\mapsto \tfrac{\pthetazero}{\pthetaone},&
\pXinf&\mapsto \tfrac{1}{\pthetaone}\big(\pXzero-\pF_{\!\pXzero}\big), & 
\scriptstyle\pF_{\!\pXinf}&\begin{scriptstyle}\mapsto -\tfrac{1}{\pthetaone}\pF_{\!\pXzero}\end{scriptstyle}\\
&&
\pthetat&\mapsto \tfrac{\pthetat}{\pthetaone^2}.&
&&&
\end{align*}

\begin{align*}
\tilde{g}_{\infty 0}:\ 
a_0&\mapsto a_\infty, &
\pthetazero&\mapsto \pthetainf,&
\pXzero&\mapsto \pXinf-\pF_{\!\pXinf}, & 
\scriptstyle\pF_{\!\pXzero}&\begin{scriptstyle}\mapsto -\pF_{\!\pXinf}\end{scriptstyle},\\
\tilde e_1 & \mapsto \tilde e_1,&	
\pthetaone&\mapsto \pthetaone,&
\pXone&\mapsto \pXone,  & 
\scriptstyle\pF_{\!\pXone}&\begin{scriptstyle}\mapsto \pF_{\!\pXone}-\pF_{\!\pXinf}\pXzero\end{scriptstyle},\\
a_\infty&\mapsto a_0,  & 
\pthetainf&\mapsto \pthetazero,&
\pXinf&\mapsto \pXzero,	& 
\scriptstyle\pF_{\!\pXinf}&\begin{scriptstyle}\mapsto \pF_{\!\pXzero}-\pF_{\!\pXinf}\pXone\end{scriptstyle}\\
&&
\pthetat&\mapsto \pthetat.&
&&&
\end{align*}

\begin{align*}
\tilde{g}_{0\infty}:\ 
a_0&\mapsto a_\infty,  & 
\pthetazero&\mapsto \pthetainf,&
\pXzero&\mapsto \pXinf,	& 
\scriptstyle\pF_{\!\pXzero}&\begin{scriptstyle}\mapsto \pF_{\!\pXinf}-\pF_{\!\pXzero}\pXone\end{scriptstyle},\\
\tilde e_1 & \mapsto \tilde e_1,&	
\pthetaone&\mapsto \pthetaone,&
\pXone&\mapsto \pXone,  & 
\scriptstyle\pF_{\!\pXone}&\begin{scriptstyle}\mapsto \pF_{\!\pXone}-\pF_{\!\pXzero}\pXinf\end{scriptstyle},\\
a_\infty&\mapsto a_0, & 		
\pthetainf&\mapsto \pthetazero,&
\pXinf&\mapsto \pXzero-\pF_{\!\pXzero}, & 
\scriptstyle\pF_{\!\pXinf}&\begin{scriptstyle}\mapsto -\pF_{\!\pXzero}\end{scriptstyle}\\
&&
\pthetat&\mapsto \pthetat.&
&&&
\end{align*}

\item
The pullback $\tilde g_{t1}^{\circ 2}=\tilde g_{\infty0}^{\circ 2}\in\Aut_{\omega_{\Cal S_{V}}}(\Cal S_{V}(\ptheta))$ of the monodromy operator $g_{t1}^{\circ 2}=g_{\infty0}^{\circ 2}\in\Aut_{\omega_{\Cal S_{VI}}}(\Cal S_{VI}(\theta))$ by the transformation $\Phi_+$ \eqref{eq:wm-UWXXX}
is given by the square iterate of the above operators $\tilde g_{t1}$, $\tilde g_{\infty0}$
\begin{equation*}
\begin{aligned}
\tilde g_{\infty0}^{\circ 2}:\ 
\pXzero&\mapsto \pXzero-\pF_{\!\pXzero}+\pXone\pF_{\!\pXinf}, &
	\scriptstyle\pF_{\!\pXzero}&\begin{scriptstyle}\mapsto-\pF_{\!\pXzero}+\pXone\pF_{\!\pXinf} \end{scriptstyle},\\[0pt]
\pXone&\mapsto \pXone, &
	\scriptstyle\pF_{\!\pXone}&\begin{scriptstyle}\mapsto\pF_{\!\pXone}-\pXzero\pF_{\!\pXinf}-\pXinf\pF_{\!\pXzero}+\pF_{\!\pXzero}\pF_{\!\pXinf}+\pXone\pXinf\pF_{\!\pXinf}-\pXone\pF_{\!\pXinf}^2\end{scriptstyle},\\[0pt]
\pXinf&\mapsto \pXinf-\pF_{\!\pXinf},&
	\scriptstyle\pF_{\!\pXinf}&\begin{scriptstyle}\mapsto -\pF_{\!\pXinf}-\pXone\pF_{\!\pXzero}+\pXone^2\pF_{\!\pXinf}\end{scriptstyle},
	\end{aligned}
\end{equation*}
and its inverse
\begin{equation*}
\begin{aligned}
\tilde g_{0\infty}^{\circ 2}:\  
\pXzero&\mapsto \pXzero-\pF_{\!\pXzero}, &
	\scriptstyle\pF_{\!\pXzero}&\begin{scriptstyle}\mapsto-\pF_{\!\pXzero}-\pXone\pF_{\!\pXinf}+\pXone^2\pF_{\!\pXzero}\end{scriptstyle},\\[0pt]
\pXone&\mapsto \pXone, &
	\scriptstyle\pF_{\!\pXone}&\begin{scriptstyle}\mapsto\pF_{\!\pXone}-\pXzero\pF_{\!\pXinf}-\pXinf\pF_{\!\pXzero}+\pF_{\!\pXzero}\pF_{\!\pXinf}+\pXone\pXzero\pF_{\!\pXzero}-\pXone\pF_{\!\pXzero}^2\end{scriptstyle},\\[0pt]
\pXinf&\mapsto \pXinf-\pF_{\!\pXinf}+\pXone\pF_{\!\pXzero},&\ 
	\scriptstyle\pF_{\!\pXinf}&\begin{scriptstyle}\mapsto -\pF_{\!\pXinf}+\pXone\pF_{\!\pXzero}\end{scriptstyle}.	
	\end{aligned}
\end{equation*}
They preserve the Fricke relation: $\pF\circ \tilde g_{\infty0}^{\circ 2}=\pF.$
\end{enumerate}
\end{proposition}

The ``half-monodromy'' action $\tilde g_{t1}$ was previously described in \cite{PR}, and the monodromy action $g_{t1}^{\circ 2}=g_{\infty0}^{\circ 2}$ was considered in \cite{PS,PR}.

\begin{proof}
One way to obtain the action $\tilde g_{t1}$ is by the limiting procedure of Remark~\ref{remark:wm-I}. For example in (i), for $\Im\epsilon>0$:
\begin{equation*}
\begin{array}{rcl}
\big(\tfrac{X_t}{a_1},X_0,\tfrac{X_1}{a_1}\big) & \xmapsto{g_{t1}} &\big(\tfrac{X_1-F_{\!X_1}}{a_t},X_0,\tfrac{X_t}{a_t}\big) 	\\[6pt]
\Big\downarrow \epsilon\to 0 && \qquad\quad\Big\downarrow\epsilon\to 0 \\[6pt]
\big(\pXzero,\pXone,\pXinf\big) & \xmapsto{\tilde g_{t1}} &\big(\tfrac{\pXinf-\pF_{\!\pXinf}}{\tilde e_1},\pXone,\tfrac{\pXzero}{\tilde e_1}\big), 	\\
\end{array}	
\end{equation*}
and similarly in (ii), for $\Im\epsilon<0$.	

A different way	to obtain the action $\tilde g_{t1}$ is to look on how the braid $\beta_{t1}$ acts on the monodromy representation $\rho_+$ \eqref{eq:wm-confluentM}.
	After a conjugation by a matrix $PM_{1+}^{-1}$ the action \eqref{eq:wm-beta} is written as 
	\begin{equation*}
	\begin{aligned}
	(\beta_{t1})_*:\
	M_{0+} &\mapsto \rho^{\beta_{t1}}(\gamma_0)=PM_{1+}^{-1}M_{0+}M_{1+}P ,\qquad\\
	M_{t+} &\mapsto \rho^{\beta_{t1}}(\gamma_t)=PM_{1+}P,\\
	M_{1+} &\mapsto \rho^{\beta_{t1}}(\gamma_1)=PM_{0+}P,\\
	M_{\infty+} &\mapsto \rho^{\beta_{t1}}(\gamma_0)=PM_{1+}^{-1}M_{\infty+}M_{1+}P ,
	\end{aligned}
	\end{equation*}
	where $P=\left(\begin{smallmatrix} 0 & 1 \\[3pt] 1 & 0 \end{smallmatrix}\right)$, so that $\rho^{\beta_{t1}}(\gamma_t)$ is upper-triangular, and  $\rho^{\beta_{t1}}(\gamma_1)$ is lower-triangular. Therefore $g_{t1}$ maps $(e_t,e_1)\mapsto(\frac{1}{e_1},\frac{1}{e_t})$, and we have 
	\begin{align*}
	\tilde e_1&\mapsto\tfrac{1}{\tilde e_1},\\
	\pXone&\mapsto\pXone,\\
	\pXzero&\mapsto\big(M_{1+}^{-1}M_{0+}M_{1+}\big)_{11}=\alpha+\beta s_{1+}=\tfrac{\pXinf-\pF_{\!\pXinf}}{\tilde e_1},\\
	\pXinf&\mapsto\big(M_{1+}^{-1}M_{\infty+}M_{1+}\big)_{11}=\tfrac{\delta}{e_te_1}=\tfrac{\pXzero}{\tilde e_1}.
	\end{align*}

Likewise, the formula for the action of $\tilde g_{\infty 0}$ can be obtained from  $g_{\infty 0}$ by the the limiting procedure of Remark~\ref{remark:wm-I}. 
\end{proof}

\subsection{Lines and singularities of $\Cal S_V(\ptheta)$}

\begin{proposition}[Lines of $\Cal S_{V}(\ptheta)$] \label{prop:wm-linesV}
	The polynomial $\pF(\pX,\ptheta)$  \eqref{eq:wm-tildeF} can be decomposed as
	\begin{align*}
	\pF(\pX,\ptheta)
	&=(\pXone-\tilde e_1-\tfrac{1}{\tilde e_1})\pF_{\!\pXone}
	\ + \ \tilde e_1(\pXinf+\tfrac{\pXzero}{\tilde e_1}-a_\infty)(\pXzero+\tfrac{\pXinf}{\tilde e_1}-a_0),\\
	&=(\pXone-e_0e_\infty-\tfrac{1}{e_0e_\infty})\pF_{\!\pXone}
	\ + \ (e_0\pXinf+\tfrac{\pXzero}{e_\infty} -\tilde e_1-\tfrac{e_0}{e_\infty}) (\tfrac{\pXinf}{e_0}+ e_\infty \pXzero-\tilde e_1-\tfrac{e_\infty}{e_0}),\\
	&=(\pXone-\tfrac{e_0}{e_\infty}-\tfrac{e_\infty}{e_0})\pF_{\!\pXone}
	\ + \ (e_0\pXinf+e_\infty \pXzero-\tilde e_1-e_0e_\infty)(\tfrac{\pXinf}{e_0}+\tfrac{\pXzero}{e_\infty}-\tilde e_1-\tfrac{1}{e_0e_\infty}),\\
	&=(\pXzero-e_0)(\pF_{\!\pXzero}-\pXzero+e_0)
	\ + \ (\pXinf-\tfrac{\tilde e_1}{e_0})(e_0 \pXone+\pXinf-\tilde e_1 e_0-a_\infty),\\
	&=(\pXzero-e_0)(\tfrac{\tilde e_1}{e_0} \pXone+\pXzero-\tfrac{1}{e_0}-\tilde e_1 a_\infty)
	\ + \  (\pXinf-\tfrac{\tilde e_1}{e_0})(\pF_{\!\pXinf}-\pXinf+\tfrac{\tilde e_1}{e_0}),\\	
	&=(\pXzero-\tfrac{1}{e_0})(\pF_{\!\pXzero}-\pXzero+\tfrac{1}{e_0})
	\ + \  (\pXinf-\tilde e_1 e_0)(\tfrac{\pXone}{e_0}+\pXinf-\tfrac{\tilde e_1}{e_0}-a_\infty),\\
	&=(\pXzero-\tfrac{1}{e_0})(\tilde e_1 e_0 \pXone+\pXzero- e_0-\tilde e_1 a_\infty)
	 \ + \   (\pXinf-\tilde e_1 e_0)(\pF_{\!\pXinf}-\pXinf+\tilde e_1 e_0),\\
	&=(\pXinf-e_\infty)(\pF_{\!\pXinf}-\pXinf+e_\infty)
	\ + \ (\pXzero-\tfrac{\tilde e_1}{e_\infty})(e_\infty \pXone+\pXzero-\tilde e_1 e_\infty-a_0),\\
	&=(\pXinf-e_\infty)(\tfrac{\tilde e_1}{e_\infty} \pXone+\pXinf-\tfrac{1}{e_\infty}-\tilde e_1 a_0)
	\ + \  (\pXzero-\tfrac{\tilde e_1}{e_\infty})(\pF_{\!\pXzero}-\pXzero+\tfrac{\tilde e_1}{e_\infty}),\\
	&=(\pXinf-\tfrac{1}{e_\infty})(\pF_{\!\pXinf}-\pXinf+\tfrac{1}{e_\infty})
	\ + \ (\pXzero-\tilde e_1 e_\infty)(\tfrac{\pXone}{e_\infty}+\pXzero-\tfrac{\tilde e_1}{e_\infty}-a_0),\\
	&=(\pXinf-\tfrac{1}{e_\infty})(\tilde e_1 e_\infty \pXone+\pXinf- e_\infty-\tilde e_1 a_0)
	\ + \  (\pXzero-\tilde e_1 e_\infty)(\pF_{\!\pXzero}-\pXzero+\tilde e_1 e_\infty).
\end{align*}
	defining thus 18 lines on $\Cal S_{V}(\ptheta)$ (note that some lines appear twice in the above decomposition).
\end{proposition}

\begin{remark}
Each of the monodromies $M_0,M_\infty$ (assuming diagonalizable) define a pair of invariant eigenspaces of solutions, while each of the Stokes matrices $S_{1+},S_{2+}$
(assuming nontrivial) define one invariant eigenspaces of solutions.  	
As in Remark~\ref{remark:wm-mixedbasis}, the lines in $\Cal S_{V}(\ptheta)$ correspond to degeneration of certain mixed bases of solutions. 
More about this in a future work.
\end{remark}

The projective completion of $\Cal S_{V}(\ptheta)$ in $\CP^3$ contains 3 additional lines at infinity, giving a total 21 lines, and there is a singularity of type $A_1$ at the infinity (see \cite{BW79}). The surface $\Cal S_{V}(\ptheta)$ can have additional singularities, which happens if and only if some the 18 lines coincide.

\begin{proposition}[Singular points of $\Cal S_{V}(\ptheta)$]\label{prop:singularitiesV}
	The affine cubic variety $\Cal S_{V}(\ptheta)$ has singular points if and only if 
	\begin{equation*}
	(a_0^2-4)(a_\infty^2-4)\tilde w(\ptheta)=0,
	\end{equation*}
	where
	\begin{align*}
	\tilde w(\tilde a)&=(a_0^2+\tilde a_1^2+a_\infty^2-a_0\tilde a_1a_\infty-4), \quad \text{with}\quad \tilde a_1=\tilde e_1+\tfrac{1}{\tilde e_1},\\
	&=\frac{(e_0\tilde e_1e_\infty-1)(e_0\tilde e_1-e_\infty)(\tilde e_1e_\infty-e_0)(\tilde e_1-e_0e_\infty)}{(e_0\tilde e_1e_\infty)^2},
	\end{align*}
	(cf. \cite[section 3.2.2]{PS}).
	The corresponding possible singularities are the following:
	\begin{itemize}
		\item if $a_\infty=\pm 2$: 
		$\pXzero=\pm\tilde e_1,\quad \pXone=\pm a_0,\quad \pXinf=\pm 1$,
		\item if $a_0=\pm 2$: 
		$\pXzero=\pm 1,\quad \pXone=\pm a_\infty,\quad \pXinf=\pm\tilde e_1$,
		\item if $\tilde e_1=e_0^{\delta_0}e_\infty^{\delta_\infty}$, $(\delta_0,\delta_\infty)\in\{\pm1\}^2$: 
		$\pXzero=e_0^{\delta_0},\quad \pXone=\tilde e_1+\frac{1}{\tilde e_1},\quad \pXinf=e_\infty^{\delta_\infty}$.
	\end{itemize}
	Setting $\pX=(\tfrac{x_0}{v},\tfrac{x_1}{v},\tfrac{x_\infty}{v})$,
	the projective completion of $\Cal S_{V}(\ptheta)$ in $\CP^3$ has also a singularity at the point $(x_0:x_1:x_\infty:v)=(0:1:0:0)$ for any value of the parameters. 
\end{proposition}

\begin{proof}
	By Theorem~\ref{theorem:wm-Phi}, the surface $\Cal S_{V}(\ptheta)$ is isomorphic to $\Cal S_{VI}(\theta)\sminus L_0$,
	one can therefore use the description of the singular points of $\Cal S_{VI}(\theta)$ given in  Section~\ref{paragraph:wm-singularitiesSVI}.
\end{proof}

\subsection{The center manifold solution}

\begin{proposition}\label{prop:wm-centermanifold}
For $\epsilon\in\sE_+$ let $\pX$  be the coordinate \eqref{eq:wm-UW} on $\Cal S_{V}(\ptheta)$ which for $\epsilon\neq 0$ is given by the monodromy representation $\rho_+$ depending on $x=\frac{1}{\tilde t}+\epsilon=\frac{\epsilon t}{t-1}$ as in Figure~\ref{figure:wm-glueingx}.
Then the upper ``sectorial center manifold'' solution $\Psi_+^{\uppie}(0,x,\epsilon)$ of Corollaries~\ref{cor:wm-centermanifold} and~\ref{cor:wm-centermanifold0}
over the domain $\sX_+^{\uppie}$ corresponds to the point
\begin{equation}\label{eq:wm-sectorialcm}
	(\pXzero,\pXone,\pXinf)=(\tfrac{1}{e_0},\ \tilde e_1+e_0a_\infty-e_0^2\tilde e_1,\ e_0\tilde e_1)\quad\text{for}\ 
x\in\sX_+^{\uppie}(\epsilon),\ \epsilon\in\sE_+.
\end{equation}
\end{proposition}

	
%
%

The proof of Proposition~\ref{prop:wm-centermanifold} is based on the following reformulation of the Jimbo's asymptotic formula.

\begin{proposition}[Jimbo's formula for the confluent system]\label{prop:wm-Jimboconfluent}
For $\epsilon\in\sE_+\sminus\{0\}$ let $\pX$  be the coordinate \eqref{eq:wm-UW} on $\Cal S_{V}(\ptheta)$ given by the monodromy representation $\rho_+$ depending on $x=\frac{1}{\tilde t}+\epsilon=\frac{\epsilon t}{t-1}$ as in Figure~\ref{figure:wm-glueingx}.
Let $\transp(q(x),p(x))=\transp(q_+^{\uppie}(x),p_+^{\uppie}(x))$ be the solution of the confluent Painlev\'e system \eqref{eq:wm-PVIhamiltonianx} corresponding to a point $\pX\in \Cal S_{V}(\ptheta)$ over the domain $x\in\sX_+^{\uppie}$, which corresponds (up to analytic extension) to the upper half-plane in Figure~\ref{figure:wm-glueingx}.
\begin{enumerate}
\item When $x\to 0$, $x\in\sX_+^{\uppie}(\epsilon)$, $\epsilon\in\sE_+$:
	\begin{align*}
	q&\sim\alpha(\vartheta,X)\left(\frac{x}{x-\epsilon}\right)^{1-\sigma_1}+\ O\big(\big(\tfrac{x}{x-\epsilon}\big)^{2-2\sigma_1}\big), \\
	p&\sim\frac{\vartheta_0+\vartheta_t-\sigma_1}{2\alpha(\vartheta,X)}\left(\frac{x}{x-\epsilon}\right)^{\sigma_1-1}+\ O(1),
	\end{align*}
	where $\sigma_1$, defined by $X_1=e^{\pi i\sigma_1}+e^{-\pi i\sigma_1}$, and  $\alpha(X,\vartheta)$ are as in Proposition~\ref{prop:wm-Jimbo}, and $(X,\vartheta)$ is related to $(\pX,\tilde\vartheta,\epsilon)$ by 
	the birational transformation \eqref{eq:wm-XXXUW} and \eqref{eq:wm-tt}.
	
\item When $x\to \epsilon$, $x\in\sX_+^{\uppie}(\epsilon)$, $\epsilon\in\sE_+$:
	\begin{align*}
	q&\sim\frac{1}{\alpha(\vartheta',X')}\left(\frac{x-\epsilon}{x}\right)^{\sigma_1'-1}+\ O(1), \\
	p&\sim\alpha(\vartheta',X')\frac{\vartheta_0+\vartheta_1+\sigma_1'-2}{2}\left(\frac{x-\epsilon}{x}\right)^{1-\sigma_1'}
	+\ O\big(\big(\tfrac{x-\epsilon}{x}\big)^{2-2\sigma_1'}\big),
	\end{align*}
	where $(X',\vartheta')=g_{\infty 0}(X,\theta)$, i.e.
	\[\vartheta'=(\vartheta_\infty,\vartheta_t,\vartheta_1,\vartheta_0), \qquad X'=(X_0,\, X_1-F_{\!X_1},\, X_t),\] 
	and $(X,\vartheta)$ are related to $(\pX,\tilde\vartheta,\epsilon)$ by \eqref{eq:wm-XXXUW} and \eqref{eq:wm-tt}.
\end{enumerate} 
\end{proposition}

\begin{proof}[Proof of Proposition~\ref{prop:wm-centermanifold}]
By Proposition~\ref{prop:wm-Jimboconfluent}, in order for the solution $\transp(q_+^{\uppie}(x),p_+^{\uppie}(x))$  to be bounded at $x=0$ we need to have $\sigma_1=\vartheta_0+\vartheta_t$, while $\alpha(\vartheta,X)\neq 0,\infty$. Using \eqref{eq:wm-ab} let us write $\alpha(\vartheta,X)=\frac{A(\vartheta,\sigma_1)}{e^{\pi i\sigma_1}F_{\!X_t}-F_{\!X_0}}$ for some function $A$. 
This gives us the following condition:
\begin{equation}\label{eq:wm-cond1}
X_1=e_0e_t+\tfrac{1}{e_0e_t},	\qquad e_0e_t F_{\!X_t}-F_{\!X_0}\neq0.
\end{equation}
Similarly, in order for the solution $\transp(q_+^{\uppie}(x),p_+^{\uppie}(x))$ to be bounded at $x=\epsilon$ we need to have 
$\sigma_1'=2-\vartheta_0-\vartheta_1$, while $\alpha(\vartheta',X')\neq 0,\infty$, meaning that $e^{\pi i\sigma_1'} F'_{\!X'_t}-F'_{\!X'_0}\neq0$. 
This gives us the following condition:
\begin{equation}\label{eq:wm-cond2}
X_t=e_0e_1+\tfrac{1}{e_0e_1},	\qquad e_0e_1 F_{\!X_1}-F_{\!X_0}\neq0.
\end{equation}
By Proposition~\ref{prop:wm-lines}, the identity in \eqref{eq:wm-cond2} means  that either
\[\begin{cases} \text{(a)} & e_0e_1X_0+X_1-e_1a_\infty-e_0a_t=0,\\
	\text{(b)} & e_0e_1X_1+X_0-e_0a_\infty-e_1a_t=0,\end{cases}\]
from which we have respectively
\[\begin{cases} \text{(a)} & X_0=\tfrac{1}{e_te_1}+\tfrac{a_\infty}{e_0}-\tfrac{1}{e_0^2e_te_1},\\
	\text{(b)} & X_0=e_te_1+e_0a_\infty-e_0^2e_te_1.\end{cases} \]
The case (a) doesn't satisfy neither the condition \eqref{eq:wm-cond1} nor \eqref{eq:wm-cond2}.
In the case (b) using the formulas \eqref{eq:wm-UWXXX} we obtain that
\[\pXzero=\tfrac{1}{e_0},\quad \pXone=e_te_1+e_0 a_\infty-e_0^2e_te_1,\quad  \pXinf=e_0e_te_1.\]
\end{proof}

\begin{figure}[t]
	\centering
	\begin{tikzpicture} [scale=0.25]
	\draw[dashed, ultra thick] (-20,0) -- (20,0);
	\node at (-6,0)[circle,fill,inner sep=2pt]{}; \draw (-6,-1.5) node {$x\!=\!0$};
	\node at (6,0)[circle,fill,inner sep=2pt]{}; \draw (6,-1.5) node {$x\!=\!\epsilon$};
	\draw[<->, thick] (-16.5,2) .. controls +(-0.5,-1) and +(-0.5,1) .. (-16.5,-2); \draw (-18,1.5) node {$\id$};	
	\draw[->, thick] (0,2) -- (0,-2); \draw (0,1.5) node[left] {$\tilde g_{0t}^{\circ 2}$};
	\draw[->, thick] (16.5,-2) .. controls +(0.5,1) and +(0.5,-1) .. (16.5,2); \draw (16.5,-1.5) node[right] {$\tilde g_{t1}^{\circ 2}=\tilde g_{\infty 0}^{\circ 2}$};
	
	\begin{scope}[shift={(-13,6)}]
	\node at (-3,0)[circle,fill,inner sep=1pt]{}; \draw (-3.2,-1) node {$0$};
	\node at (-1,0)[circle,fill,inner sep=1pt]{}; \draw (-1,-1) node {$t$}; 
	\node at (1,0)[circle,fill,inner sep=1pt]{}; \draw (1,-1) node {$1$};
	\node at (3,0)[circle,fill,inner sep=1pt]{}; \draw (3.2,-1) node {$\infty$};
	\draw (0,3.6) node {$z_0$};  
	\draw[-] (-3,0) -- (0,3); 
	\draw[-] (-1,0) -- (0,3); 
	\draw[-] (1,0) -- (0,3);
	\draw[-] (3,0) -- (0,3); 
	\end{scope}

	\begin{scope}[shift={(0,6)}]
	\node at (-3,0)[circle,fill,inner sep=1pt]{}; \draw (-3.5,-1) node {$t$}; 
	\node at (-1,0)[circle,fill,inner sep=1pt]{}; \draw (-1,-1) node {$0$};
	\node at (1,0)[circle,fill,inner sep=1pt]{}; \draw (1,-1) node {$1$};
	\node at (3,0)[circle,fill,inner sep=1pt]{}; \draw (3.2,-1) node {$\infty$};
	\draw (0,3.6) node {$z_0$}; 
	\draw[-] (-3,0) .. controls +(0,-3) and +(0,-3) .. (0,0) -- (0,3); 
	\draw[-] (-1,0) -- (0,3); 
	\draw[-] (1,0) -- (0,3); 
	\draw[-] (3,0) -- (0,3); 
	\end{scope}
	
	\begin{scope}[shift={(13,6)}]
	\node at (-3,0)[circle,fill,inner sep=1pt]{}; \draw (-3.2,-1) node {$0$};
	\node at (-1,0)[circle,fill,inner sep=1pt]{}; \draw (-1,-1) node {$1$};
	\node at (1,0)[circle,fill,inner sep=1pt]{}; \draw (1.5,-1) node {$t$}; 
	\node at (3,0)[circle,fill,inner sep=1pt]{}; \draw (3.2,-1) node {$\infty$};
	\draw (0,3.6) node {$z_0$}; 
	\draw[-] (-3,0) -- (0,3); 
	\draw[-] (-1,0) -- (0,3); 
	\draw[-] (1,0) .. controls +(0,-3) and +(-1,-3) .. (-2,0) -- (0,3); 
	\draw[-] (3,0) -- (0,3); 
	\end{scope}
	
	\begin{scope}[shift={(-13,-7)}]
	\node at (-3,0)[circle,fill,inner sep=1pt]{}; \draw (-3.2,-1) node {$0$};
	\node at (-1,0)[circle,fill,inner sep=1pt]{}; \draw (-1,-1) node {$t$}; 
	\node at (1,0)[circle,fill,inner sep=1pt]{}; \draw (1,-1) node {$1$};
	\node at (3,0)[circle,fill,inner sep=1pt]{}; \draw (3.2,-1) node {$\infty$};
	\draw (0,3.6) node {$z_0$};  
	\draw[-] (-3,0) -- (0,3); 
	\draw[-] (-1,0) -- (0,3); 
	\draw[-] (1,0) -- (0,3);
	\draw[-] (3,0) -- (0,3); 
	\end{scope}
	
	\begin{scope}[shift={(0,-7)}]
	\node at (-3,0)[circle,fill,inner sep=1pt]{}; \draw (-3.2,-1) node {$t$};
	\node at (-1,0)[circle,fill,inner sep=1pt]{}; \draw (-0.7,-1) node {$0$}; 
	\node at (1,0)[circle,fill,inner sep=1pt]{}; \draw (1,-1) node {$1$};
	\node at (3,0)[circle,fill,inner sep=1pt]{}; \draw (3.2,-1) node {$\infty$};
	\draw (0,3.6) node {$z_0$}; 
	\draw[-] (-3,0) -- (0,3); 
	\draw[-] (-1,0) .. controls +(-0.5,-3) and +(-2.5,-2) .. (-4.5,0) -- (0,3); 
	\draw[-] (1,0) -- (0,3); 
	\draw[-] (3,0) -- (0,3); 
	\end{scope}
	
	\begin{scope}[shift={(13,-7)}]
	\node at (-3,0)[circle,fill,inner sep=1pt]{}; \draw (-3,-1) node {$0$};
	\node at (-1,0)[circle,fill,inner sep=1pt]{}; \draw (-1.2,-1) node {$1$}; 
	\node at (1,0)[circle,fill,inner sep=1pt]{}; \draw (1,-1) node {$t$};
	\node at (3,0)[circle,fill,inner sep=1pt]{}; \draw (3.4,-1) node {$\infty$};
	\draw (0,3.6) node {$z_0$}; 
	\draw[-] (-3,0) -- (0,3); 
	\draw[-] (-1,0) .. controls +(1,-3) and +(1,-2) .. (2,0) -- (0,3); 
	\draw[-] (1,0) -- (0,3); 
	\draw[-] (3,0) -- (0,3); 
	\end{scope}

	\end{tikzpicture}
	\vskip-10pt
	\caption{The paths around which are taken the loops $\gamma_0,\gamma_t,\gamma_1$ defining the confluent monodromy representation $\rho_+$  
		and therefore the coordinate $\pX$ on $\Cal S_{V}(\ptheta)$ for $\epsilon\in\sE_+\sminus\{0\}$ in dependence on $x=\frac{\epsilon t}{t-1}\in \C\sminus\{0,\epsilon\}$ (cf. Figure~\ref{figure:wm-glueing}), and the corresponding transition maps $\tilde g_{0t}^{\circ2},\ \tilde g_{\infty0}^{\circ2}(e^{\frac{2\pi i}{\epsilon}}):\Cal S_V(\ptheta)\to \Cal S_V(\ptheta)$.}
	\label{figure:wm-glueingx}
\end{figure}

\begin{proof}[Proof of Proposition~\ref{prop:wm-Jimboconfluent}]
The asymptotics at $x=0$ is simply the Jimbo's formula of Proposition~\ref{prop:wm-Jimbo}, which for $\epsilon\in\R_{>0}$ is defined for $|\arg(-x)|<\pi$.

The asymptotics at $x=\epsilon$ is obtained from the Jimbo's formula using the Okamoto's transformation
\[t'=t^{-1},\quad q'=q^{-1}, \quad p'=q\cdot(\tfrac{\vartheta_0+\vartheta_t+\vartheta_1+\vartheta_\infty}{2}-1-qp),\quad \vartheta'=(\vartheta_\infty,\vartheta_t,\vartheta_1,\vartheta_0),\]	
which preserves the Hamiltonian system of $P_{VI}$,
i.e.
\begin{equation}\label{eq:wm-Okamototransformation}
x'=\epsilon-x,\quad q'=q^{-1}, \quad p'=q\cdot(\tfrac{\vartheta_0+\tilde\vartheta_1+\vartheta_\infty}{2}-1-qp),\quad (\vartheta_0',\tilde\vartheta_1',\vartheta_\infty')=(\vartheta_\infty,\tilde\vartheta_1,\vartheta_0),
\end{equation}
which preserves the system \eqref{eq:wm-PVIhamiltonianx}.
This corresponds to the transformation
\[z'=z^{-1},\quad t'=t^{-1},\quad (A_0',A_{t'}',A_1',A_\infty')=(A_\infty,A_t,A_1,A_0),\]
of the isomonodromic problem \eqref{eq:wm-A}.
It transforms the monodromy representation $\rho_+$ (Figure~\ref{figure:wm-glueingx}) to $\rho'$, $\rho'(\gamma'_{j'})=M_j$, $j=0,t,1,\infty$,
where for $x\in\sX_+^{\uppie}(\epsilon)$, $\epsilon\in\sE_+$, one has up to a conjugation (see Figure~\ref{figure:wm-glueingxx})
\[\gamma'_{0'}=\gamma_\infty,\quad 
\gamma'_{t'}=\gamma_t,\quad 
\gamma'_{1'}=\gamma_1,\quad 
\gamma'_{\infty'}=\gamma_\infty\gamma_0\gamma_\infty^{-1}.\]
This means that up to a conjugation
\begin{align*}
M_0'&=\rho'(\gamma_0)=M_0M_\infty M_0^{-1}, \quad \\
M_t'&=\rho'(\gamma_t)=M_t, \quad \\
M_1'&=\rho'(\gamma_1)=M_1, \quad \\
M_\infty'&=\rho'(\gamma_\infty)=M_0,
\end{align*}
i.e. $\rho'=(\beta_{\infty0})_*\rho_+$ \eqref{eq:wm-beta},
and hence \eqref{eq:wm-ginfty0}
\[X'=g_{\infty0}(X)=(X_0,\ X_1-F_{\!X_1},\ X_t).\]
\end{proof}

\begin{figure}[t]
\centering
\begin{tikzpicture} [scale=0.25]
	\draw[dashed, ultra thick] (-20,0) -- (20,0);
	\node at (-6,0)[circle,fill,inner sep=2pt]{}; \draw (-6,-1.5) node {$x'\!=\!0$};
	\node at (6,0)[circle,fill,inner sep=2pt]{}; \draw (6,-1.5) node {$x'\!=\!\epsilon$};
	\draw[<->, thick] (16.5,-2) .. controls +(0.5,1) and +(0.5,-1) .. (16.5,2); \draw (18,-1.5) node {$\id$};	
	\draw[->, thick] (0,-2) -- (0,2); \draw (0,-1.5) node[left] {$\tilde g_{0t}^{\circ 2}$};
	\draw[->, thick] (-16.5,2) .. controls +(-0.5,-1) and +(-0.5,1) .. (-16.5,-2); \draw (-16.5,1.5) node[left] {$\tilde g_{t1}^{\circ 2}=\tilde g_{\infty 0}^{\circ 2}$};
		
	\begin{scope}[shift={(-13,8)}]
		\draw (0,-3.8) node {$z_0'$}; 
		\node at (-3,0)[circle,fill,inner sep=1pt]{}; \draw (-3.5,1) node {$\infty'$};
		\node at (-1,0)[circle,fill,inner sep=1pt]{}; \draw (-0.7,1) node {$t'$};
		\node at (1,0)[circle,fill,inner sep=1pt]{}; \draw (1.7,1) node {$1'$}; 
		\node at (3,0)[circle,fill,inner sep=1pt]{}; \draw (3.5,1) node {$0'$}; 
		\draw[-] (-3,0) -- (0,-3); 
		\draw[-] (-1,0) -- (0,-3); 
		\draw[-] (1,0) .. controls +(0,4) and +(-2,3) .. (-2,0) -- (0,-3); 
		\draw[-] (3,0) -- (0,-3); 
	\end{scope}

	\begin{scope}[shift={(0,8)}]
		\draw (0,-3.8) node {$z_0'$}; 
		\node at (-3,0)[circle,fill,inner sep=1pt]{}; \draw (-3.2,1) node {$t'$};
		\node at (-1,0)[circle,fill,inner sep=1pt]{}; \draw (-1,1) node {$\infty'$}; 
		\node at (1,0)[circle,fill,inner sep=1pt]{}; \draw (1,1) node {$1'$};	
		\node at (3,0)[circle,fill,inner sep=1pt]{}; \draw (3.7,1) node {$0'$};
		\draw[-] (-3,0) -- (0,-3); 
		\draw[-] (-1,0) -- (0,-3); 
		\draw[-] (1,0) -- (0,-3); 
		\draw[-] (3,0) .. controls +(0,4) and +(-2,3) .. (-2,0) -- (0,-3); 
	\end{scope}

	\begin{scope}[shift={(13,8)}]
		\draw (0,-3.8) node {$z_0'$}; 
		\node at (-3,0)[circle,fill,inner sep=1pt]{}; \draw (-3,1) node {$\infty'$};
		\node at (-1,0)[circle,fill,inner sep=1pt]{}; \draw (-1,1) node {$1'$};
		\node at (1,0)[circle,fill,inner sep=1pt]{}; \draw (1.2,1) node {$t'$}; 
		\node at (3,0)[circle,fill,inner sep=1pt]{}; \draw (3.4,1) node {$0'$};
		\draw[-] (-3,0) -- (0,-3);
		\draw[-] (-1,0) -- (0,-3); 
		\draw[-] (1,0) -- (0,-3); 
		\draw[-] (3,0) -- (0,-3); 
	\end{scope}

	\begin{scope}[shift={(-13,-6)}]
		\draw (0,-3.8) node {$z_0'$}; 
		\node at (-3,0)[circle,fill,inner sep=1pt]{}; \draw (-3.5,1) node {$\infty'$};
		\node at (-1,0)[circle,fill,inner sep=1pt]{}; \draw (-1.5,1) node {$t'$};
		\node at (1,0)[circle,fill,inner sep=1pt]{}; \draw (1,1) node {$1'$}; 
		\node at (3,0)[circle,fill,inner sep=1pt]{}; \draw (3.5,1) node {$0'$}; 
		\draw[-] (-3,0) -- (0,-3); 
		\draw[-] (-1,0) .. controls +(0,4) and +(2,3) .. (2,0) -- (0,-3); 
		\draw[-] (1,0) -- (0,-3); 
		\draw[-] (3,0) -- (0,-3); 
	\end{scope}
	
	\begin{scope}[shift={(0,-6)}]
		\draw (0,-3.8) node {$z_0'$}; 
		\node at (-3,0)[circle,fill,inner sep=1pt]{}; \draw (-3.5,1) node {$t'$};
		\node at (-1,0)[circle,fill,inner sep=1pt]{}; \draw (-1,1) node {$\infty'$}; 
		\node at (1,0)[circle,fill,inner sep=1pt]{}; \draw (1,1) node {$1'$};	
		\node at (3,0)[circle,fill,inner sep=1pt]{}; \draw (3.7,1) node {$0'$};
		\draw[-] (-3,0) .. controls +(0,4) and +(2,3) .. (2,0) -- (0,-3); 
		\draw[-] (-1,0) -- (0,-3); 
		\draw[-] (1,0) -- (0,-3); 
		\draw[-] (3,0) -- (0,-3);
	\end{scope}
	
	\begin{scope}[shift={(13,-6)}]
		\draw (0,-3.8) node {$z_0'$}; 
		\node at (-3,0)[circle,fill,inner sep=1pt]{}; \draw (-3,1) node {$\infty'$};
		\node at (-1,0)[circle,fill,inner sep=1pt]{}; \draw (-1,1) node {$1'$};
		\node at (1,0)[circle,fill,inner sep=1pt]{}; \draw (1.2,1) node {$t'$}; 
		\node at (3,0)[circle,fill,inner sep=1pt]{}; \draw (3.4,1) node {$0'$};
		\draw[-] (-3,0) -- (0,-3);
		\draw[-] (-1,0) -- (0,-3); 
		\draw[-] (1,0) -- (0,-3); 
		\draw[-] (3,0) -- (0,-3); 
	\end{scope}

	\begin{scope}[color=red, shift={(-13,-18)}]
		\draw (0,6) node {\rotatebox{-90}{$\simeq$}};
		\draw (0,3.6) node {$z_0$};
		\node at (-3,0)[circle,fill,inner sep=1pt]{}; \draw (-3.9,-1) node {$\infty'$};	
		\node at (-1,0)[circle,fill,inner sep=1pt]{}; \draw (-1,-1) node {$t'$};
		\node at (1,0)[circle,fill,inner sep=1pt]{}; \draw (1,-1) node {$1'$}; 
		\node at (3,0)[circle,fill,inner sep=1pt]{}; \draw (3.5,-1) node {$0'$};
		\draw[-] (-3,0) .. controls +(0,-4) and +(2,-3) .. (2,0) -- (0,3); 
		\draw[-] (-1,0) -- (0,3); 
		\draw[-] (1,0) -- (0,3); 
		\draw[-] (3,0) -- (0,3); 
	\end{scope}

	\begin{scope}[color=red, shift={(0,-18)}]
		\draw (0,6) node {\rotatebox{-90}{$\simeq$}};
		\draw (0,3.6) node {$z_0$}; 
		\node at (-3,0)[circle,fill,inner sep=1pt]{}; \draw (-3.5,-1) node {$t'$}; 
		\node at (-1,0)[circle,fill,inner sep=1pt]{}; \draw (-1.9,-1) node {$\infty'$}; 
		\node at (1,0)[circle,fill,inner sep=1pt]{}; \draw (1,-1) node {$1'$};
		\node at (3,0)[circle,fill,inner sep=1pt]{}; \draw (3.5,-1) node {$0'$};
		\draw[-] (-3,0) -- (0,3); 
		\draw[-] (-1,0) .. controls +(0,-4) and +(2,-3) .. (2,0) -- (0,3); 
		\draw[-] (1,0) -- (0,3); 
		\draw[-] (3,0) -- (0,3); 
	\end{scope}

	\begin{scope}[color=red, shift={(13,-18)}]
		\draw (0,6) node {\rotatebox{-90}{$\simeq$}};
		\draw (0,3.6) node {$z_0$}; 
		\node at (-3,0)[circle,fill,inner sep=1pt]{}; \draw (-3.9,-1) node {$\infty'$};
		\node at (-1,0)[circle,fill,inner sep=1pt]{}; \draw (-1.6,-1) node {$1'$}; 
		\node at (1,0)[circle,fill,inner sep=1pt]{}; \draw (1,-1) node {$t'$};
		\node at (3,0)[circle,fill,inner sep=1pt]{}; \draw (3.6,-1) node {$0'$};
		\draw[-] (-3,0) .. controls +(0,-4) and +(2,-3) .. (2.3,0) -- (0,3); 
		\draw[-] (-1,0) .. controls +(0,-3) and +(1,-2) .. (1.7,0) -- (0,3); 
		\draw[-] (1,0) -- (0,3); 
		\draw[-] (3,0) -- (0,3); 
	\end{scope}

\end{tikzpicture}
	\vskip-12pt
	\caption{The transform of the Figure~\ref{figure:wm-glueingx} by $x'=\epsilon-x$, $z'=z^{-1}$, $(0',t',1',\infty')=(\infty,t^{-1},1,0)$, 
		showing the paths around which are taken the loops $\gamma'_{0'},\gamma'_{t'},\gamma'_{1'},\gamma'_{\infty'}$ defining the confluent monodromy representation $\rho'$ and therefore the coordinate $X'$ on $x$. We are only interested in the lower half-plane part of this picture. The diagrams in the bottom row (red) are conjugated to those above (black) by moving the basepoint from $z'_0$ to $z_0$ in order  to compare with those in Figure~\ref{figure:wm-glueingx}.}
	\label{figure:wm-glueingxx}
\end{figure}

\section{The wild monodromy action on $\Cal S_V(\ptheta)$}\label{section:wm-2wild}

The only nonlinear monodromy actions (pure braid actions) on ${\Cal S}_{V}(\ptheta)$ that converge during the confluence are those generated by $\tilde g_{\infty0}^{\circ 2}$ 
(in Proposition~\ref{prop:wm-gt1}).
As was explained in Section~\ref{section:wm-1a2}, for the other actions we need instead to consider limits of their pullbacks by $\Phi_+$ \eqref{eq:wm-XXXUW} along the sequences 
\[(\epsilon_n)_{n\in\pm\N}, \quad \tfrac{1}{\epsilon_n}=\tfrac{1}{\epsilon_0}+n\] 
along which the divergent parameter $e_t^2=e^{\frac{2\pi i}{\epsilon}}$ stays constant.
This amounts to replacing $e_t^2$ by a new independent parameter $\kappa\in\C^*$, i.e. writing
\begin{equation}\label{eq:wm-kappa}
e_te_1=\tilde e_1,\qquad \frac{e_t}{e_1}=\frac{\kappa}{\tilde e_1}.
\end{equation}
The idea of taking the limit of the Riemann--Hilbert correspondence for the isomonodromic problem \eqref{eq:wm-tildeA} along such discrete sequences $(\epsilon_n)$ was already considered by Kitaev \cite{Kit} but with a different aim. 
The limit of a nonlinear monodromy operator accumulating towards a one-parameter family of wild monodromy operators acting on the wild character variety  have not been studied before.

\begin{proposition}\label{prop:wm-tildeg0t}
	For $\epsilon\neq 0$, let $\Phi_+$ be the transformation \eqref{eq:wm-XXXUW}, and let 
	\[\tilde g_{ij}^{\circ 2}(\,\cdot\,;\kappa):=\Phi_+^{-1}\circ g_{ij}^{\circ 2}\circ\Phi_+:\Cal S_{V}(\ptheta)\to \Cal S_{V}(\ptheta),\quad \{i,j\}=\{0,t\},\] 
	be the the pullback of the monodromy action $g_{ij}^{\circ 2}$,	composed with the substitution~\eqref{eq:wm-kappa}. Then
\begin{equation*}
\begin{aligned}
		\tilde g_{0t}^{\circ 2}(\,\cdot\,;\kappa):\ \ 
		a_0&\mapsto a_0 & \pXzero&\mapsto\frac{\tilde e_1}{\pXinf},\\
		\tilde e_1 &\mapsto \tilde e_1 & \pXone&\mapsto \pXone+\tfrac{1}{\tilde e_1}(\pXone\pF_{\!\pXone}-\pXinf\pF_{\!\pXinf})+\tfrac{\kappa}{\tilde e_1^2}\pXinf\pF_{\!\pXzero}-\pF_{\!\pXone}\big(\tfrac{1}{\kappa}+\tfrac{\kappa}{\tilde e_1^2}\big),\\
		a_\infty&\mapsto a_\infty & \pXinf&\mapsto \pXinf+\frac{\tilde e_1}{\kappa\pXinf}\pF_{\!\pXone},\\[8pt]
		%
		\tilde g_{t0}^{\circ 2}(\,\cdot\,;\kappa):\ \ 
		a_0&\mapsto a_0 &\pXzero&\mapsto \pXzero+\frac{\kappa}{\tilde e_1\pXzero}\pF_{\!\pXone},\\
		\tilde e_1&\mapsto \tilde e_1 &\pXone&\mapsto \pXone+\tfrac{1}{\tilde e_1}(\pXone\pF_{\!\pXone}-\pXzero\pF_{\!\pXzero})+\tfrac{1}{\kappa}\pXzero\pF_{\!\pXinf}-\pF_{\!\pXone}\big(\tfrac{1}{\kappa}+\tfrac{\kappa}{\tilde e_1^2}\big),\\
		a_\infty&\mapsto a_\infty &\pXinf&\mapsto\frac{\tilde e_1}{\pXzero}.
\end{aligned}
\end{equation*}
They preserve the symplectic form $\tilde{\omega}_{\Cal S_{V}}$, and change the Fricke relation by a factor:
\[\pF\circ \tilde g_{t0}^{\circ 2}(\pX;\kappa)=
\frac{(\pXone-\tfrac{\tilde e_1}{\kappa}-\tfrac{\kappa}{\tilde e_1})}
{\big(\pXone-\tfrac{\tilde e_1}{\kappa}-\tfrac{\kappa}{\tilde e_1}-(F_{\!X_0}\circ\Phi_+)\big)}\pF(\pX,\tilde{\theta}).\]
\end{proposition}

\begin{proof}
 The formulas are obtained by plugging \eqref{eq:wm-XXXUW}, \eqref{eq:wm-UWXXX} to $g_{0t}^{\circ 2}$ and $g_{t0}^{\circ 2}$ \eqref{eq:wm-gij2}, where the action of  $g_{0t}^{\circ 2}$ fixes $(e_t,e_1)$, and hence also $(\tilde e_1,\kappa)$.	
	
Alternatively, the action \eqref{eq:wm-beta} of $(\beta_{0t}^2)_*$  on the monodromy representation $\rho_+$ is
\begin{equation*}
\begin{aligned}
(\beta_{0t}^2)_*:\
M_{0+} &\mapsto (M_{t+}M_{0+})M_{0+}(M_{t+}M_{0+})^{-1},\qquad\\
M_{t+} &\mapsto (M_{t+}M_{0+})M_{t+}(M_{t+}M_{0+})^{-1},\\
M_{1+} &\mapsto M_{1+},\\
M_{\infty+} &\mapsto M_{\infty+},
\end{aligned}
\end{equation*}
where by \eqref{eq:wm-confluentM} $M_{t+}M_{0+}=N_tS_{2+}M_{0+}$.
Decompose $S_{2+}M_{0+}=LU$ with 
\[L=\begin{pmatrix} \alpha+\gamma s_{2+} & 0 \\ \gamma & \delta-\gamma\tfrac{\beta+\delta s_{2+}}{\alpha+\gamma s_{2+}} \end{pmatrix},\qquad
U=\begin{pmatrix}1 & \tfrac{\beta+\delta s_{2+}}{\alpha+\gamma s_{2+}} \\ 0 & 1 \end{pmatrix},\]
and conjugate the above monodromy representation by $N_tL$, 
so that the image of $M_{t+}=N_tS_{2+}$, resp. $M_{1+}=S_{1+}N_1$, 
is again upper, resp. lower, triangular:
\begin{equation*}
	\begin{aligned}
		M_{0+} &\mapsto UM_{0+}U^{-1},&&\\
		M_{t+} &\mapsto UM_{t+}U^{-1}, & 	S_{2+} &\mapsto N_t(\kappa)^{-1}UN_t(\kappa)S_{2+}U^{-1}\\
		M_{1+} &\mapsto L^{-1}N_t(\kappa)^{-1}M_{1+}N_t(\kappa)L,\quad & S_{1+} &\mapsto L^{-1}N_t(\kappa)^{-1}S_{1+}NLN^{-1}N_t(\kappa), \\
		M_{\infty+} &\mapsto L^{-1}N_t(\kappa)^{-1}M_{\infty+}N_t(\kappa)L, &&
	\end{aligned}
\end{equation*}
where
\[N_t(\kappa)=\begin{pmatrix}\kappa^{\frac12} & 0 \\ 0&\kappa^{-\frac12}  \end{pmatrix},\qquad 
N=\begin{pmatrix}\tilde e_1 & 0 \\ 0&\tilde e_1^{-1}  \end{pmatrix},\]
and in particular the images of $M_{t+}$ and $M_{1+}$ have the same diagonal parts as $M_{t+}$ and $M_{1+}$ have. 
From this one can express the action of $\tilde g_{0t}^{\circ 2}(\kappa)$ on $\tilde X$  \eqref{eq:wm-UW}
in terms of $\alpha,\beta,\gamma,\delta,s_{1+},s_{2+}$ and then re-express it in terms of $\pX,\tilde{\theta}$.
Similarly for the inverse $\tilde g_{t0}^{\circ 2}(\kappa)$.
\end{proof}

The ``initial conditions'' $c=(c_1,c_2)$ define through the map $(x,c)\mapsto \transp(q^{\uppie}(x;c),p^{\uppie}(x;c))$ \eqref{eq:wm-qp0} local coordinates on the space of leaves of the Painlev\'e foliation over the sector $x\in\sX^{\uppie}(0)$, $|c|<\delta_c$.
Therefore, $c$ defines local coordinates also on the Okamoto space of initial conditions $\Cal M_{V,\tilde t}$ \eqref{eq:wm-Okamotospace} of $P_V$ for each $x=\tilde t^{-1}\in \sX^{\uppie}(0)$
on a neighborhood of the point which corresponds to the sectoral center manifold $c=0$.
The Okamoto space space $\Cal M_{V,\tilde t}$ and the wild character variety $\Cal S_V(\ptheta)$ are isomorphic on some Zariski open set
\[ \Cal M_{V,\tilde t}(\tilde\vartheta) \dashrightarrow \Cal S_{V}(\ptheta).\]
It has been conjectured in \cite{PS} that in fact $\Cal M_{V,\tilde t}$ is a isomorphic to a minimal strict resolution of $\Cal M_{V,\tilde t}$ if singular (for $P_{VI}$ this is known to be true, see \cite{IIS} and the references there). 
Anyway, it means that the wild monodromy pseudogroup of Definition~\ref{def:wm-pseudogroup0} acts locally on $\Cal S_{V}(\ptheta)$ near the point \eqref{eq:wm-sectorialcm}.
It turns out this action is in fact global (Proposition~\ref{prop:wm-wildmonodromy} below). By Proposition~\ref{prop:wm-accumulation}, the wild monodromy pseudogroup
is obtained as a limit of the nonlinear monodromy group through the accumulation along the discrete sequences \eqref{eq:wm-epsilon_n} of the parameter $\epsilon$.

\begin{proposition}\label{prop:wm-wildmonodromy}
For $\epsilon=0$, the action of the nonlinear wild monodromy operators $\tilde{\mbf M}_{0+}^{\uppie}(c;\kappa)$ and $\tilde{\mbf M}_{\epsilon+}^{\uppie}(c;\kappa)$ \eqref{eq:wm-wildmonodromy}, which are defined locally near the point corresponding to $c=0$ on the Okamoto fiber $\Cal M_{V,\tilde t}$ over $x=\tilde t^{-1}\in\sX^{\uppie}(0)$, extend as bimeromorphic maps to the whole fiber.
In the coordinate $\pX$ \eqref{eq:wm-UW} on $\Cal S_V(\ptheta)$ over $\tilde t^{-1}\in\sX^{\uppie}(0)$ 
the nonlinear wild monodromy operator
\begin{itemize}
	\item[-]  $\tilde{\mbf M}_{0+}^{\uppie}(c;\kappa)$ corresponds to the action of $\tilde g_{0t}^{\circ 2}(\pX;\kappa)$ on $\Cal S_V(\ptheta)$,
	\item[-]  $\tilde{\mbf M}_{\epsilon+}^{\uppie}(c;\kappa)$ corresponds to the action of $\tilde g_{0\infty}^{\circ 2}\circ \tilde g_{t0}^{\circ 2}(\pX;\kappa)$  on $\Cal S_V(\ptheta)$.
\end{itemize}
\end{proposition}

\begin{proof}
In fact, for all $0\neq\epsilon\in\sE_+$ the corresponding action of the nonlinear monodromy of $P_{VI}(\vartheta)$  around $0$, resp. $\epsilon$, from some base-point $x_0\in\sX_+^{\uppie}(0)$ on the character variety  $\Cal S_V(\ptheta)$ is given by $\tilde g_{0t}^{\circ 2}(\pX;e_t^2)$, resp. 
$\tilde g_{0\infty}^{\circ 2}\circ \tilde g_{t0}^{\circ 2}(\pX;e_t^2)$, see Figure~\ref{figure:wm-glueingx} (compare also with the right side of Figure~\ref{figure:wm-monodromy}). In particular, by Propositions~\ref{prop:wm-gt1} and~\ref{prop:wm-tildeg0t} it is rational on all $\Cal S_V(\ptheta)$.
\end{proof}

\begin{lemma}\label{lemma:wm-inftorus}
The vector field $c_1\tfrac{\partial}{\partial c_1}-c_2\tfrac{\partial}{\partial c_2}$ which is Hamiltonian for $h(c)=c_1c_2$ with respect to $dc_1\wedge dc_2$
corresponds to the vector field
\begin{equation} \label{eq:wm-torusfield}
\tfrac{1}{\pXzero}\big(\pF_{\!\pXone}\tfrac{\partial}{\partial\pXinf}-\pF_{\!\pXinf}\tfrac{\partial}{\partial\pXone}\big),
\end{equation}
on the wild character variety $\Cal S_V(\ptheta)$. It is Hamiltonian with respect to $\tilde\omega_{\Cal S_V}$ \eqref{eq:wm-tildeomega} for the Hamiltonian function 
\begin{equation}\label{eq:wm-tildeH}
\tilde H_0(\pX)=\tfrac{1}{2\pi i}\log \pXzero+\tfrac{\vartheta_0}{2},
\end{equation}
which corresponds to $h$.
\end{lemma}

\begin{proof}
By \eqref{eq:wm-inftorus} and Proposition~\ref{prop:wm-wildmonodromy} the vector field $c_1\tfrac{\partial}{\partial c_1}-c_2\tfrac{\partial}{\partial c_2}$ corresponds to
\begin{equation*} 
	\dot\pX=-\big(\kappa\tfrac{\partial}{\partial\kappa} \tilde g_{0t}^{\circ 2}(\,\cdot\,,\kappa)\big)\circ \tilde g_{t0}^{\circ 2}(\pX,\kappa)
\end{equation*}
which can be calculated as \eqref{eq:wm-torusfield} using the formulas of Proposition~\ref{prop:wm-tildeg0t} (note that the $\tfrac{\partial}{\partial\pXzero}$ component is clearly null, and it is enough to calculate the $\tfrac{\partial}{\partial\pXinf}$ component only,
because then the $\tfrac{\partial}{\partial\pXone}$ component is uniquely determined by the relation of tangency $d\pF=0$).

Now calculating in the local coordinate $(\pXone,\pXinf)$ on ${\Cal S}_{V}(\ptheta)$ one has 
$d\pXzero=-\frac{\pF_{\!\pXone}}{\pF_{\!\pXzero}}d\pXone-\frac{\pF_{\!\pXinf}}{\pF_{\!\pXzero}}d\pXinf$, and therefore
$d\tilde H_0=\frac{1}{2\pi i\pF_{\!\pXzero}}\left(-\frac{\pF_{\!\pXone}}{\pXzero}d\pXone-\frac{\pF_{\!\pXinf}}{\pXzero}d\pXinf\right)$, 
so $\tilde H_0$ \eqref{eq:wm-tildeH}
is the Hamiltonian of \eqref{eq:wm-torusfield} with respect to $\tilde\omega_{\Cal S_V}=\frac{d\pXone\wedge d\pXinf}{2\pi i\pF_{\!\pXzero}}$ \eqref{eq:wm-tildeomega}. 
Such Hamiltonian is defined up to a constant, and we know that $h(c)=c_1c_2$ vanishes on the ``sectorial center manifold'' over $\sX_+^{\uppie}(0)$
(Corollaries~\ref{cor:wm-centermanifold0} and~\ref{cor:wm-centermanifold}), which is given by the initial condition $c=0$. 
So our corresponding Hamiltonian $\tilde H_0$ is uniquely determined by the condition that it vanishes on this ``sectorial center manifold'', which by Proposition~\ref{prop:wm-centermanifold} corresponds to the point $(\pXzero,\pXone,\pXinf)=(\tfrac{1}{e_0},\ \tilde e_1+ e_0a_\infty-e_0^2\tilde e_1,\ e_0\tilde e_1)$.
\end{proof}

\begin{remark}
The point $(c_1,c_2)=(0,0)$ is fixed by $c_1\tfrac{\partial}{\partial c_1}-c_2\tfrac{\partial}{\partial c_2}$ and correspondingly the point \eqref{eq:wm-sectorialcm} is fixed by \eqref{eq:wm-torusfield} -- in fact it is easy to verify that both  $\pF_{\!\pXone}$ and $\pF_{\!\pXinf}$ vanish at this point.
\end{remark}

Using the formulas \eqref{eq:wm-stokesops} we can now express also the Stokes operators.

\begin{theorem}[Wild monodromy action on $\Cal S_V(\ptheta)$]~\label{theorem:wm-main} 
	
\noindent	
i) The time-$\alpha$-flow map of the vector field \eqref{eq:wm-torusfield}, $\alpha\in\C$,
is given by
\begin{equation}\label{eq:wm-tealpha}
	\mbf t(\pX; e^{\alpha})=\tilde g_{0t}^{\circ 2}(\,\cdot\,;e^{-\alpha})\circ\tilde g_{t0}^{\circ 2}(\pX;1).
\end{equation}
It factors through the exponential $\alpha\mapsto e^{\alpha}\in\C^*=\C\sminus\{0\}$ as a multiplicative action of $\C^*$:
\begin{equation}\label{eq:wm-tildetorus}
	\begin{aligned}
		\mbf t(\,\cdot\,;e^\alpha):\ \ 
		\pXzero&\mapsto\pXzero,\\
		\pXone&\mapsto \pXone-(1-e^{-\alpha})\tfrac{\pF_{\!\pXinf}}{\pXzero}+
		(2-e^{\alpha}-e^{-\alpha})\tfrac{\pF_{\!\pXone}}{\pXzero^2},\\
		\pXinf&\mapsto \pXinf-(1-e^\alpha)\tfrac{\pF_{\!\pXone}}{\pXzero}.
	\end{aligned}
\end{equation}
The action of the nonlinear \emph{exponential torus} \eqref{eq:wm-Tc} of $P_V$ on $\Cal S_V(\ptheta)$ is given by the composition of $\mbf t(\pX;e^\alpha)$
with any analytic germ $\alpha(\tilde H_0(\pX))$ of $\tilde H_0(\pX)$ \eqref{eq:wm-tildeH}.

\noindent
ii) The action of the nonlinear \emph{formal monodromy} $\mbf N$ \eqref{eq:wm-nlN}  on $\Cal S_V(\ptheta)$ is given by 
\begin{equation}\label{eq:wm-ii}
	\mbf n(\pX):=\mbf t(\pX; \tfrac{\pXzero^4}{\tilde e_1^2}),
\end{equation}
where $\mbf t(\,\cdot\,;\,\cdot\,)$ is as above \eqref{eq:wm-tildetorus}.

\noindent
iii) The action of the nonlinear \emph{Stokes operator} ${\mbf N}^{\circ(-1)}\circ\tilde{\mbf S}_1\circ{\mbf N}$ \eqref{eq:wm-tildeS} on $\Cal S_V(\ptheta)$ is given by 
\begin{equation}\label{eq:wm-iii}
	\mbf n^{\circ-1}\circ\tilde{\mbf s}_1\circ\mbf n(\pX):=\tilde g_{0t}^{\circ 2}(\pX; \pXzero^2),
\end{equation}
and the action of the nonlinear \emph{Stokes operator} $\tilde{\mbf S}_2$  \eqref{eq:wm-tildeS} on $\Cal S_V(\ptheta)$ is given by 
\begin{equation}\label{eq:wm-iiii}
	\tilde{\mbf s}_2(\pX):=\tilde g_{0\infty}^{\circ 2}\circ \tilde g_{t0}^{\circ 2}(\pX; \tfrac{\tilde e_1^2}{\pXzero^{2}}),
\end{equation}
where $\tilde g_{0t}^{\circ 2}$, $\tilde g_{t0}^{\circ 2}$ are as in Proposition~\ref{prop:wm-tildeg0t}, and 
$\tilde g_{0\infty}^{\circ 2}$ given by \eqref{eq:wm-tildeg1t2} is as in Proposition~\ref{prop:wm-gt1}.

\noindent
iv) The  action of the \emph{total monodromy operator} $\tilde{\mbf M}^{\uppie}=\tilde{\mbf S}_2\circ\tilde{\mbf S}_1\circ\mbf N$  \eqref{eq:wm-nlM} on $\Cal S_V(\ptheta)$ is given by 
\begin{equation}\label{eq:wm-tildeg1t2}
\begin{aligned}
\tilde{\mbf s}_2\circ\tilde{\mbf s}_1\circ\mbf n=\tilde g_{0\infty}^{\circ 2}:\ \ 
\pXzero&\mapsto \pXzero-\pF_{\!\pXzero},\\[0pt]
\pXone&\mapsto \pXone,\\[0pt]
\pXinf&\mapsto \pXinf-\pF_{\!\pXinf}+\pXone\pF_{\!\pXzero}.	
\end{aligned}
\end{equation}

\noindent
v) The   action of the \emph{wild monodromy pseudogroup} of $P_V$ on $\Cal S_V(\ptheta)$ is generated by 
\[\big\langle \tilde g_{0\infty}^{\circ 2},\ \tilde g_{0t}^{\circ 2}(\,\cdot\,;e^{-\alpha(\tilde H_0)})\,|\,\alpha(\cdot)\ \text{analytic germ}\big\rangle,\] or equivalently by
\[\big\langle \tilde{\mbf s}_1,\ \tilde{\mbf s}_2,\ \mbf t(\,\cdot\,;e^{\alpha(\tilde H_0)})\,|\,\alpha(\cdot)\ \text{analytic germ}\big\rangle.\]
It fixes the singularities of $\Cal S_{V}(\ptheta)$, and
its restriction to the smooth locus of $\Cal S_{V}(\ptheta)$ represents faithfully the nonlinear action of the wild monodromy pseudogroup on the ``non-Riccati solutions'' of $P_{V}(\tilde\vartheta)$. 
\end{theorem}  

\begin{proof}
i) By direct calculation one verifies easily that \eqref{eq:wm-tildetorus} is indeed the time-$\alpha$-flow map of \eqref{eq:wm-torusfield}.

ii) From \eqref{eq:wm-formalmonodromy}, 
\[\mbf n(\pX)=\mbf t(\pX;e^{2\pi i(-2\vartheta_0-\tilde\vartheta_1+4\tilde H_0)}),\] 
where $H_0(\pX)$ is \eqref{eq:wm-tildeH} meaning that $e^{2\pi i(-2\vartheta_0-\tilde\vartheta_1+4\tilde H_0(\pX))}=\frac{\pXzero^4}{\tilde e_1^2}$.

iii) From \eqref{eq:wm-stokesops} and Proposition~\ref{prop:wm-wildmonodromy} and Lemma~\ref{lemma:wm-inftorus},
\[\mbf n^{\circ-1}\circ\tilde{\mbf s}_1\circ\mbf n(\pX)=\tilde g_{0t}^{\circ 2}(\pX; e^{2\pi i(-\vartheta_0+2\tilde H_0)}),\]
where $e^{2\pi i(-\vartheta_0+2\tilde H_0(\pX))}=\pXzero^2$,
and
\[\tilde{\mbf s}_2(\pX)=\tilde g_{0\infty}^{\circ 2}\circ \tilde g_{t0}^{\circ 2}(\pX; e^{2\pi i(\vartheta_0+\tilde\vartheta_1-2\tilde H_0)}),\] 
where
$e^{2\pi i(\vartheta_0+\tilde\vartheta_1-2\tilde H_0(\pX))}=\tfrac{\tilde e_1^2}{\pXzero^{2}}$.

iv) We know from Proposition~\ref{prop:wm-gt1} that $\tilde{\mbf M}^{\uppie}=\tilde{\mbf M}_\epsilon^{\uppie}\circ \tilde{\mbf M}_0^{\uppie}$
corresponds to the monodromy operator $\tilde g_{0\infty}^{\circ 2}$  around $x=\tilde t^{-1}=0$ 
(cf.~Proposition~\ref{prop:wm-wildmonodromy}) and is independent of $\kappa$.
 But let us calculate the composition $\tilde{\mbf s}_2\circ\tilde{\mbf s}_1\circ\mbf n$ for the sake of clarity.
From \eqref{eq:wm-tealpha}
\[ \tilde g_{t0}^{\circ 2}(\pX;e^{\alpha})=\tilde g_{t0}^{\circ 2}(\,\cdot\,;1)\circ \mbf t(\pX; e^{\alpha}),\qquad
	 \tilde g_{0t}^{\circ 2}(\pX;e^{\alpha})=\mbf t(\,\cdot\,; e^{-\alpha})\circ\tilde g_{t0}^{\circ 2}(\pX;1),\qquad e^\alpha\in\C^*,	\]
and plugging in \eqref{eq:wm-ii}, \eqref{eq:wm-iii} and \eqref{eq:wm-iiii} we have
\begin{align*}
\tilde{\mbf s}_2\circ\tilde{\mbf s}_1\circ\mbf n(\pX)&=	
\tilde g_{0\infty}^{\circ 2}(\pX)\circ \tilde g_{t0}^{\circ 2}(\pX; \tfrac{\tilde e_1^2}{\pXzero^{2}})\circ 
\mbf t(\pX; \tfrac{\pXzero^4}{\tilde e_1^2})\circ \tilde g_{0t}^{\circ 2}(\pX; \pXzero^2)\\
&=\tilde g_{0\infty}^{\circ 2}(\pX) \circ \tilde g_{t0}^{\circ 2}(\pX;1) \circ \mbf t(\pX;\tfrac{\tilde e_1^2}{\pXzero^{2}}) \circ 
\mbf t(\pX; \tfrac{\pXzero^4}{\tilde e_1^2}) \circ \mbf t(\pX; \tfrac{1}{\pXzero^2}) \circ \tilde g_{0t}^{\circ 2}(\pX;1)\\
&=\tilde g_{0\infty}^{\circ 2}(\pX),
\end{align*}
since $\pXzero$ is a first integral of $\mbf t$.
\end{proof}

%
%
%

\goodbreak

\section{Appendix: Painlevé equations as isomonodromic deformations of $3\!\times\! 3$ systems}

This section exposes first how to derive the Fricke formula \eqref{eq:wm-F} for the character variety $\Cal S_{VI}(\theta)$ of $P_{VI}(\vartheta)$
as the space of Stokes data corresponding to isomonodromic deformations of $3\!\times\!3$-systems in Okubo and Birkhoff canonical forms, and describes the braid group action on the Stokes data.
Most of this can be also found in a bit different form in the article of Boalch \cite[Sections 2 \& 3]{Bo05}.
This description is then used to study the confluence of eigenvalues in these systems in order to show how the Stokes data of the limit system
for $\epsilon=0$ are connected with those for $\epsilon\neq 0$
(Figure \ref{figure:wm-6}), providing thus another derivation of the wild character $\Cal S_V(\ptheta)$ variety of $P_V(\tilde\vartheta)$ \eqref{eq:wm-tildeF} and of the formulas of the birational change of variables $\Phi_+$ \eqref{eq:wm-UWXXX}.

\subsection{Systems in Okubo and Birkhoff forms}
Aside of the $2\!\times\!2$ systems \eqref{eq:wm-A}, the sixth Painlevé equation $P_{VI}$
governs also the isomonodromic deformations $3\!\times\! 3$ linear differential systems in an \emph{Okubo form}
\begin{equation}\label{eq:wm-okubosyst}
\Big(zI-\left(\begin{smallmatrix} 0 &&\\ &t&\\ &&1\end{smallmatrix}\right)\Big)\frac{d\psi}{d z}=\Big[B(t)+\lambda I\Big]\psi,  
\end{equation}
where the matrix $B(t)$ can be written as
\begin{equation*}
B(t)
=\begin{pmatrix}
\vartheta_0 & w_0u_tv_t-v_t & w_0u_1v_1-v_1 \\
 w_tu_0v_0-v_0 & \vartheta_t & w_tu_1v_1-v_1 \\
 w_1u_0v_0-v_0 & w_1u_tv_t-v_t & \vartheta_1 
\end{pmatrix}, \qquad w_i=\frac{v_i+\vartheta_i}{u_iv_i},
\end{equation*}
where $\vartheta_i$ are the parameters of $P_{VI}$, and
the matrix $B(t)$ has eigenvalues 
\begin{equation}\label{eq:wm-kappae}
	0,\quad -\kappa_1=\tfrac{1}{2}(\vartheta_0+\vartheta_t+\vartheta_1-\vartheta_\infty), \quad -\kappa_2=\tfrac{1}{2}(\vartheta_0+\vartheta_t+\vartheta_1+\vartheta_\infty).
\end{equation}

The isomonodromic deformation of such systems in relation to $P_{VI}$ was first considered in the papers of Harnad \cite{Har94}, Dubrovin \cite{Dub96,Dub99} and Mazzocco \cite{Maz}.
The system \eqref{eq:wm-okubosyst} can be obtained from the $2\!\times\!2$ system \eqref{eq:wm-A} by the addition
of $\tfrac{1}{2}(\frac{\vartheta_0}{z}+\frac{\vartheta_t}{z-t}+\frac{\vartheta_1}{z-1})I$ to $A(z,t)$ 
(this corresponds to a gauge transformation $\phi\mapsto z^{-\frac{\vartheta_0}{2}}(z-t)^{-\frac{\vartheta_t}{2}}(z-1)^{-\frac{\vartheta_1}{2}}\phi$),
 followed by the Katz's operation of middle convolution $mc_\lambda$ with a generic parameter $\lambda$ different from $0,\kappa_1,\kappa_2$ 
\cite{HF} (see also \cite{Maz,Bo05}).

Equivalently, one may also consider the generalized  isomonodromic problem for systems in a \emph{Birkhoff canonical form}
\begin{equation}\label{eq:wm-bsyst}
\xi^2\frac{dy}{d\xi}=\Big[\left(\begin{smallmatrix} 0 &&\\ &t&\\ &&1\end{smallmatrix}\right)+ \xi B(t)\Big]y,  
\end{equation}
with a non-resonant irregular singularity at the origin.
These are dual to the Okubo systems \eqref{eq:wm-okubosyst} through the Laplace transform
\begin{equation}\label{eq:wm-laplace}
y(\xi)=\xi^{-1-\lambda}\int_0^\infty \psi(z)\,e^{-\frac{z}{\xi}}d z,\qquad |\arg(\xi) -\arg(z)|<\tfrac{\pi}{2}.
\end{equation}

All three kinds of systems \eqref{eq:wm-A}, \eqref{eq:wm-okubosyst}, \eqref{eq:wm-bsyst}, and their isomonodromy problems are essentially equivalent (at least on the Zariski open set of irreducible systems \eqref{eq:wm-A}). 
Under an additional assumption that no $\vartheta_i$ is an integer, 
the condition on (generalized) isomonodromicity of the each of the above linear systems is
equivalent to the Painlevé equation $P_{VI}(\vartheta)$ \cite{HF}.

\begin{notation}
The entries of $3\!\times\!3$ matrices will be indexed by $(0,t,1)$ rather than $(1,2,3)$, 
in a correspondence to the eigenvalues of the matrix
$\left(\begin{smallmatrix} 0 &&\\ &t&\\ &&1\end{smallmatrix}\right)$.
As before, the triple of indices $(i,j,k)$ will always denote a permutation of $(0,t,1)$, and $(i,j,k,l)$ will denote a permutation of $(0,t,1,\infty)$.
\end{notation}

\subsection{Stokes matrices of the Birkhoff system}
The Birkhoff system \eqref{eq:wm-bsyst} posses a canonical formal solution 
\begin{equation*}
\hat Y(\xi,t)=\hat T(\xi,t)\left(\begin{smallmatrix} \xi^{\vartheta_0} &&\\ & e^{-\frac{t}{\xi}}\xi^{\vartheta_t} &\\ && e^{-\frac{1}{\xi}}\xi^{\vartheta_1}\end{smallmatrix}\right),
\end{equation*}
with $\hat T(\xi,t)$ an  invertible formal series in $\xi$ (with coefficients locally analytic in $t\in\CP^1\sminus\{0,1,\infty\}$), which is unique up to right multiplication by an invertible diagonal matrix,
and unique if one demands that $ \hat T(0,t)=I$ \cite{Sch3}.
It is well known that this series is Borel summable in each non-singular direction $\alpha$ (we remind that a \emph{direction $\alpha\in\R$ is singular} for the system \eqref{eq:wm-bsyst} if $(i-j)\in e^{i\alpha}\R^+$ for some $i,j\in\{0,t,1\}$, $i\neq j$).
This means that for each non-singular direction $\alpha$, there is an associated canonical fundamental matrix solution 
\begin{equation}\label{eq:wm-Y}
Y_\alpha(\xi,t)=T_\alpha(\xi,t)\left(\begin{smallmatrix} \xi^{\vartheta_0} &&\\ & e^{-\frac{t}{\xi}}\xi^{\vartheta_t} &\\ && e^{-\frac{1}{\xi}}\xi^{\vartheta_1}\end{smallmatrix}\right), \qquad |\arg(\xi)-\alpha|<\pi,
\end{equation}
where $T_\alpha$ is the Borel sum in $\xi$ of $\hat T$ in the direction $\alpha$ given by the Laplace integral
\[T_{\alpha}(\xi,t)=\frac{1}{\xi} \int_0^{+\infty e^{i\alpha}} U(z,t)\,e^{-\frac{z}{\xi}}dz,\]
where $U(z,t)=\sum_{k=0}^{+\infty}\frac{T_k(t)}{k!}z^k$ is the formal Borel transform of $\xi\,\hat T(\xi,t)=\sum_{k=0}^{+\infty} T_k(t) \xi^{k+1}$.
This solution does not depend on $\alpha$ as long as $\alpha$ does not cross any singular direction \cite{Ba,IlYa,MR2}.

Let us restrict to $\alpha\in \,]-\!\pi,\pi[$, and suppose for a moment that ${0,t,1}$ are not collinear, i.e. that there are six distinct singular rays  $(i-j)\R^+$
as in Figure~\ref{figure:wm-2a}.
When $\alpha$ crosses some singular direction (in clockwise sense)
the corresponding sectorial basis $Y_\alpha$ changes in a way that corresponds to a multiplication by a constant (with respect to $\xi$) invertible matrix,
called \emph{Stokes matrix}, of the form
\begin{equation}\label{eq:wm-sij}
S_{ij}=I+s_{ij}E_{ij},
\end{equation}
 where $E_{ij}$ denotes the matrix with 1 at the position $(i,j)$ and zero elsewhere.
For the singular ray $(0-1)\R^+$, one needs to take in account also the jump in the argument of $\xi$ between $-\pi$ and $\pi$, therefore the change of basis is provided by a matrix $\bar NS_{01}$, where $\bar N$ is the \emph{formal monodromy} of $\hat Y$:
\begin{equation}\label{eq:wm-barN}
\bar N=\left(\begin{smallmatrix}  e_0^2 &&\\ & e_t^2 & \\ && e_1^2 \end{smallmatrix}\right), \quad \text{where}\quad e_j:=e^{\pi i \vartheta_j}.
\end{equation}
See Figure \ref{figure:wm-2a}.

\begin{figure}[t]
\centering
\begin{subfigure}[t]{0.4\textwidth}
\includegraphics [width=\textwidth]{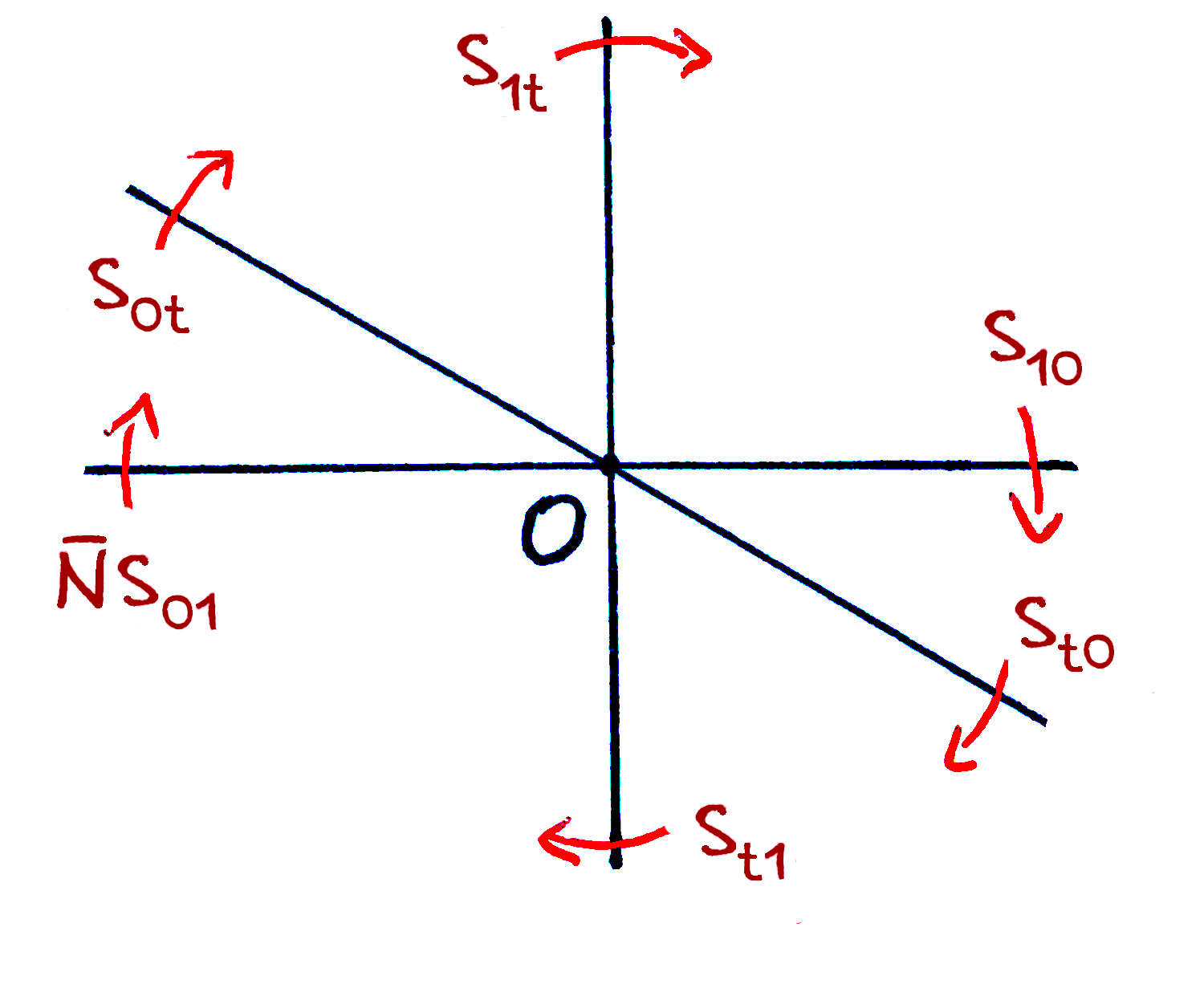}
\caption{Birkhoff system}	\label{figure:wm-2a} 
\end{subfigure}
\hskip0.05\textwidth
\begin{subfigure}[t]{0.4\textwidth}
\includegraphics [width=\textwidth]{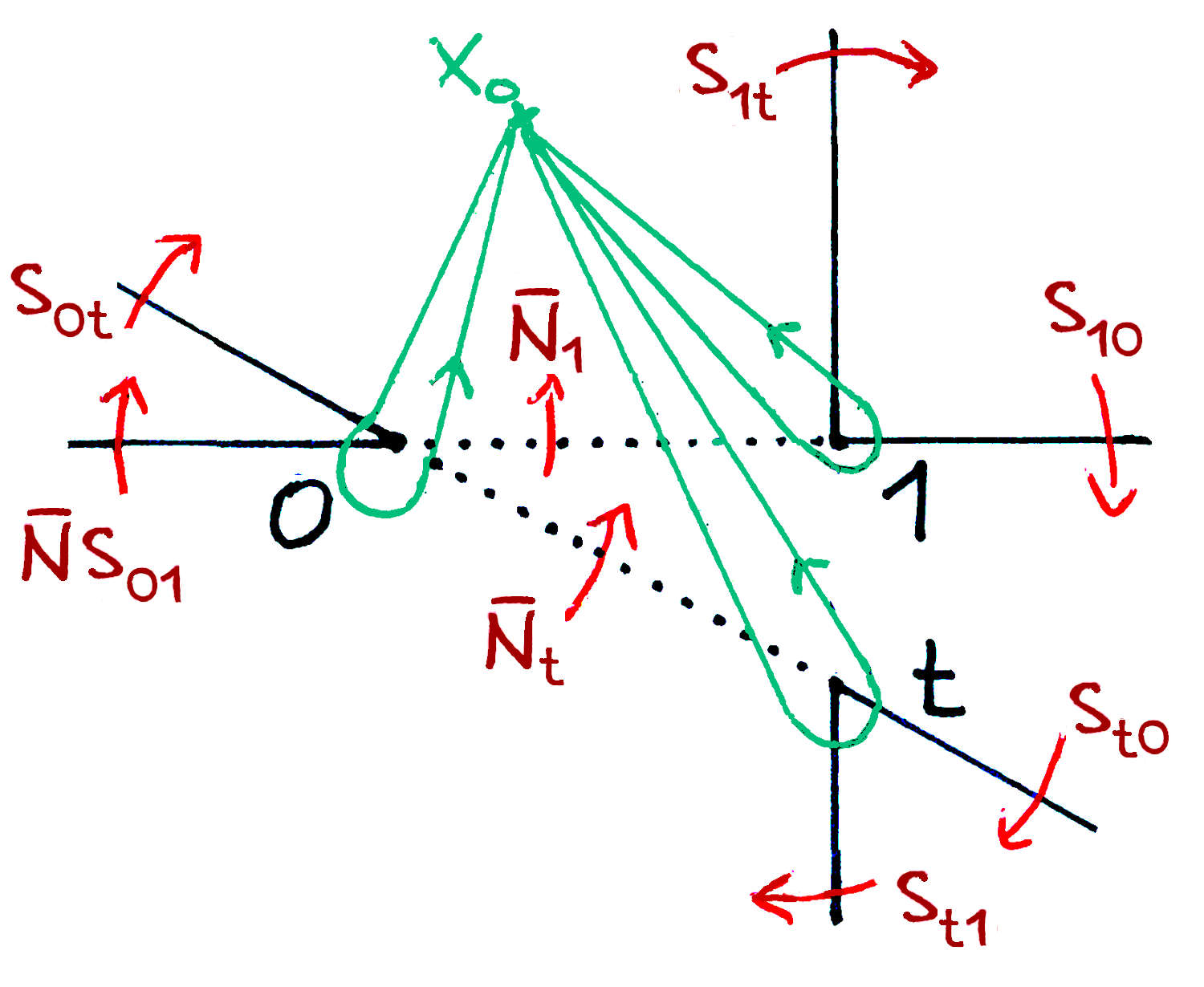}
\caption{Okubo system}	 \label{figure:wm-2b}
\end{subfigure}
\caption{Singular directions and Stokes matrices.}
\label{figure:wm-2}
\end{figure}

Since in general the formal transformation $\hat T(\xi,t)$, and therefore also the collection of the sectorial bases $Y_\alpha(\xi,t)$ \eqref{eq:wm-Y}, are unique only up to right
multiplication by invertible diagonal matrices, the collection of the Stokes matrices $S_{ij}$ is defined only up a simultaneous conjugation by diagonal matrices. 
The obvious invariants with respect to such conjugation are
\begin{equation}\label{eq:wm-ss}
s_{0t}s_{t0}, \quad s_{t1}s_{1t},\quad s_{10}s_{01},\quad s_{0t}s_{t1}s_{10},\quad s_{1t}s_{t0}s_{01},
\end{equation}
where $s_{ij}$ are as in \eqref{eq:wm-sij}, subject to the relation 
\begin{equation}\label{eq:wm-sss}
s_{0t}s_{t0}\cdot s_{t1}s_{1t}\cdot s_{10}s_{01}-s_{0t}s_{t1}s_{10}\cdot s_{1t}s_{t0}s_{01}=0.
\end{equation}
   
The \emph{isomonodromic condition} on the family \eqref{eq:wm-bsyst} demands that the collection of Stokes matrices is independent of $t$ up to conjugacy by diagonal matrices, i.e. that \eqref{eq:wm-ss} are constant.

\subsection{Monodromy of the Okubo system}
Now let us consider the Okubo system \eqref{eq:wm-okubosyst}, 
where we chose for simplicity 
\[\lambda=0.
\]
Corresponding to the canonical sectorial solutions bases $Y_\alpha=\big(Y_{\alpha,ij}\big)_{i,j}$ of \eqref{eq:wm-bsyst}, 
there are canonical sectorial solutions bases $\Psi_\alpha=\big(\Psi_{\alpha,ij}\big)_{i,j}$ of \eqref{eq:wm-okubosyst},
given by the convolution integral
\[\Psi_{\alpha,ij}(z,t)=\frac{1}{\Gamma(\vartheta_j+\lambda)} \int_j^z U_{ij}(\zeta-j,t)(z-\zeta)^{\vartheta_j+\lambda-1}d\zeta,\]
where $U(z,t)=\sum_{k=0}^{+\infty}\frac{T_k(t)}{k!}z^k$ is the formal Borel transform of $\xi\hat T(\xi,t)=\sum_{k=0}^{+\infty} T_k(t) \xi^k$ \cite{Sch1,Kl3}.
They are related to $Y_\alpha(\xi,t)$ by the Laplace transform \eqref{eq:wm-laplace}
\[Y_{\alpha,ij}(\xi,t)=\tfrac{1}{\xi}\int_j^{+\infty e^{i\alpha}} \Psi_{\alpha,ij}(z,t)\,e^{-\frac{z}{\xi}}d z,\qquad
i,j\in\{0,t,1\}.\]
The sectors on which they are defined (see Figure \ref{figure:wm-2b}) are the different components of the complement in $\C$ of
\[\bigcup_{i,j\in\{0,t,1\}} \big(i+(i\!-\!j)\R^+\big)\ \cup\ [0,1] \ \cup\ [0,t].\] 
When crossing one of the rays $i+(i-j)\R^+$ in clockwise sense the solution basis $\Psi_{\alpha}(z,t)$ changes by the same Stokes matrix $S_{ij}$ \eqref{eq:wm-sij} as before, except for the ray
$-\R^+$, where it again changes by $\bar NS_{01}$ (cf. \cite{Kl3}).
When crossing the segments $[0,t]$ the basis changes by $\bar N_t$, and on $[0,1]$ by $\bar N_1$,
where 
\[\bar N_i=I+(e_i^2-1)E_{ii},\qquad e_i=e^{\pi i \vartheta_i},\]
is the monodromy matrix of 
$\left(\begin{smallmatrix} z^{\vartheta_0} &&\\ & \!(z-t)^{\vartheta_t}\!\! &\\ && \!\!(z-1)^{\vartheta_1}\!\end{smallmatrix}\right)$
around the point $i\in\{0,t,1\}$, and $\bar N=\bar N_1\bar N_t\bar N_0$ \eqref{eq:wm-barN}.
See Figure \ref{figure:wm-2b}.

Fixing a base-point $z_0$ and three simple loops $\gamma_0,\gamma_t,\gamma_1$ in positive direction around the points $0,t,1$ respectively, such that their composition $\gamma_0\gamma_t\gamma_1=\gamma_\infty^{-1}$ gives a simple loop around the infinity as in Figure \ref{figure:wm-2b}, 
let $\bar M_l$, $l=0,t,1,\infty$, be the associated monodromy matrices along $\gamma_l$:
\begin{equation}\label{eq:wm-barM}
\bar M_0=\bar N_0S_{01}S_{0t},\quad \bar M_t=\bar N_1^{-1}S_{t0}S_{t1}\bar N_t\bar N_1,\quad \bar M_1=S_{1t}S_{10}\bar N_1,
\end{equation}
determined up to a simultaneous conjugation in $\GL_3(\C)$.
We have
\[\tr(\bar M_i)=e_i^2+2,\quad i\in\{0,t,1\}.\]
Denoting
\[X_i=\tfrac{\tr(\bar M_j\bar M_k)-1}{e_je_k},
\]
and $s_{ij}$ as in \eqref{eq:wm-sij}, we have
\begin{equation}\label{eq:wm-XXX}
X_0=\tfrac{e_t^2+e_1^2+e_1^2s_{t1}s_{1t}}{e_te_1},\qquad 
X_t=\tfrac{e_0^2+e_1^2+e_0^2s_{10}s_{01}}{e_0e_1},\qquad 
 =\tfrac{e_0^2+e_t^2+e_0^2s_{0t}s_{t0}}{e_0e_t}.
\end{equation}
The monodromy around all the three points equals 
\begin{align*}
\bar M_\infty^{-1}&=\bar M_1\bar M_t\bar M_0=S_{1t}S_{10}S_{t0}S_{t1}\bar NS_{01}S_{0t}\\[3pt]
&=\mbox{\scriptsize$\begin{pmatrix}
e_0^2 & e_0^2s_{0t} & e_0^2s_{01} \\[3pt]
e_0^2s_{t0} & e_t^2 + e_0^2s_{t0}s_{0t} & e_1^2s_{t1}+e_0^2s_{t0}s_{01} \\[3pt]
e_0^2s_{10}+e_0^2s_{1t}s_{t0} & e_t^2s_{1t}+e_0^2s_{10}s_{0t}+e_0^2s_{1t}s_{t0}s_{0t} & e_1^2+ e_1^2s_{1t}s_{t1}+e_0^2s_{10}s_{01}+e_0^2s_{1t}s_{t0}s_{01}
\end{pmatrix}$}
\end{align*}
From \eqref{eq:wm-kappae}, we know that its eigenvalues are $1$, $e^{-2\pi i\kappa_1}=\frac{e_0e_te_1}{e_\infty}$ and $e^{-2\pi i\kappa_2}={e_0e_te_1e_\infty}$. 
Expressing the coefficients of the linear term $E$ and the quadratic term $E'$ of the characteristic polynomial of $\bar M_\infty^{-1}$ leads to
\begin{equation}\label{eq:wm-s1t0}
e_te_1X_0+e_0e_1X_t+e_0e_tX_1 +e_0^2s_{1t}s_{t0}s_{01}-e_0^2-e_t^2-e_1^2= 1+\tfrac{e_0e_te_1}{e_\infty}+e_0e_te_1e_\infty:=E, 
\end{equation}
\begin{equation}\label{eq:wm-s0t1}
\begin{split} 
e_0^2e_te_1X_0+e_t^2e_0e_1X_t+e_1^2e_0e_tX_1 -e_0^2e_1^2&s_{0t}s_{t1}s_{10}-e_0^2e_t^2-e_t^2e_1^2-e_1^2e_0^2=\\ &=\tfrac{e_0e_te_1}{e_\infty}+e_0e_te_1e_\infty+e_0^2e_t^2e_1^2:=E'.
\end{split}
\end{equation}
Inserting the expression for $s_{1t}s_{t0}s_{01}$ \eqref{eq:wm-s1t0} and for $s_{0t}s_{t1}s_{10}$ \eqref{eq:wm-s0t1} into the relation \eqref{eq:wm-sss}
gives the \emph{Fricke relation} \eqref{eq:wm-F}
\begin{equation*}
 X_0X_tX_1+X_0^2+X_t^2+X_1^2-\theta_0X_0-\theta_tX_t-\theta_1X_1+\theta_\infty=0,
\end{equation*}
with
\begin{align*}
\theta_i&=\tfrac{e_j^2+e_k^2+E}{e_je_k}+\tfrac{E'}{e_i^2e_je_k}=a_ia_\infty+a_ja_k,
\\
\theta_\infty&=
1+\tfrac{E}{e_0}+\tfrac{E}{e_t}+\tfrac{E}{e_1}+\tfrac{E'}{e_0e_t}+\tfrac{E'}{e_te_1}+\tfrac{E'}{e_1e_0}=a_0a_ta_1a_\infty+a_0^2+a_t^2+a_1^2+a_\infty^2-4.
\end{align*}

Let us remark that for $(i,j,k)$ a cyclic permutation of $(0,t,1)$, the line 
\[\{X_k=\tfrac{e_i}{e_j}+\tfrac{e_j}{e_i},\ \ e_iX_i+e_jX_j=a_\infty+e_ie_ja_k\}\]
of Proposition \ref{prop:wm-lines} corresponds to $s_{ij}=0$ in \eqref{eq:wm-s0t1}, while the line
\[\{X_k=\tfrac{e_i}{e_j}+\tfrac{e_j}{e_i},\ \ e_iX_j+e_jX_i=a_k+e_ie_ja_\infty\}\] 
corresponds to $s_{ji}=0$ in \eqref{eq:wm-s1t0}.\label{page:wm-line}

\medskip

We will now derive the induced action of the braids  $\beta_{0t}$ and $\beta_{t1}$ (Figure~\ref{figure:wm-braidaction}) on the Stokes matrices $S_{ij}$, providing an 
alternative proof of Proposition~\ref{prop:wm-modular}.
The induced action of $\beta_{0t}$, resp. $\beta_{t1}$, on the Stokes matrices is obtained by:
\begin{itemize}
\item[1)] Tracing the connection matrices of the Okubo system \eqref{eq:wm-okubosyst} as the two corresponding points turn around each other according to the braid $\beta_{0t}$, resp. $\beta_{t1}$, and see how they change when the three points $0,t,1$ align. 
See Figure \ref{figure:wm-braidaction}. 
We use the fact that $S_{ij}S_{kl}=S_{kl}S_{ij}$ if $j\neq k$ and $l\neq i$.
\item[2)] Swapping the names of the points $0\leftrightarrow t$, resp. $t\leftrightarrow 1$. This permutes also the corresponding positions of all the matrices,
i.e. acts on them by conjugation by $P_{0t}=\left(\begin{smallmatrix} 0 & 1 & 0 \\ 1 & 0 & 0 \\ 0 & 0& 1 \end{smallmatrix}\right)$, resp. 
$P_{t1}=\left(\begin{smallmatrix} 1 & 0 & 0 \\ 0 & 0 & 1 \\ 0 & 1& 0 \end{smallmatrix}\right)$.
\end{itemize}
\begin{figure}[t]
\centering
\begin{subfigure}[t]{0.45\textwidth} 
\includegraphics [width=\textwidth]{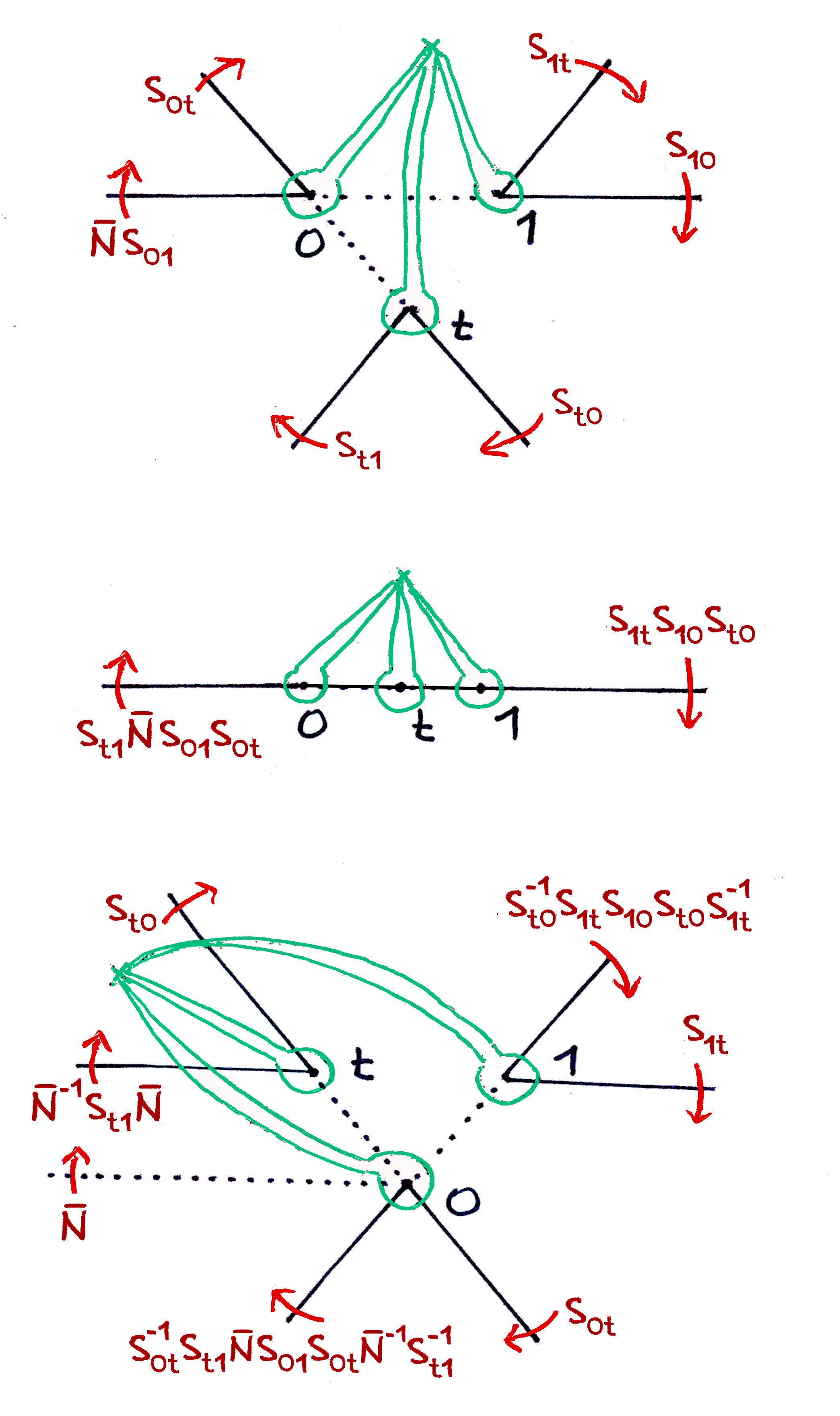}
\caption{Action of $\beta_{0t}$.}	
\end{subfigure}
\hskip0.05\textwidth
\begin{subfigure}[t]{0.45\textwidth} 
\includegraphics [width=\textwidth]{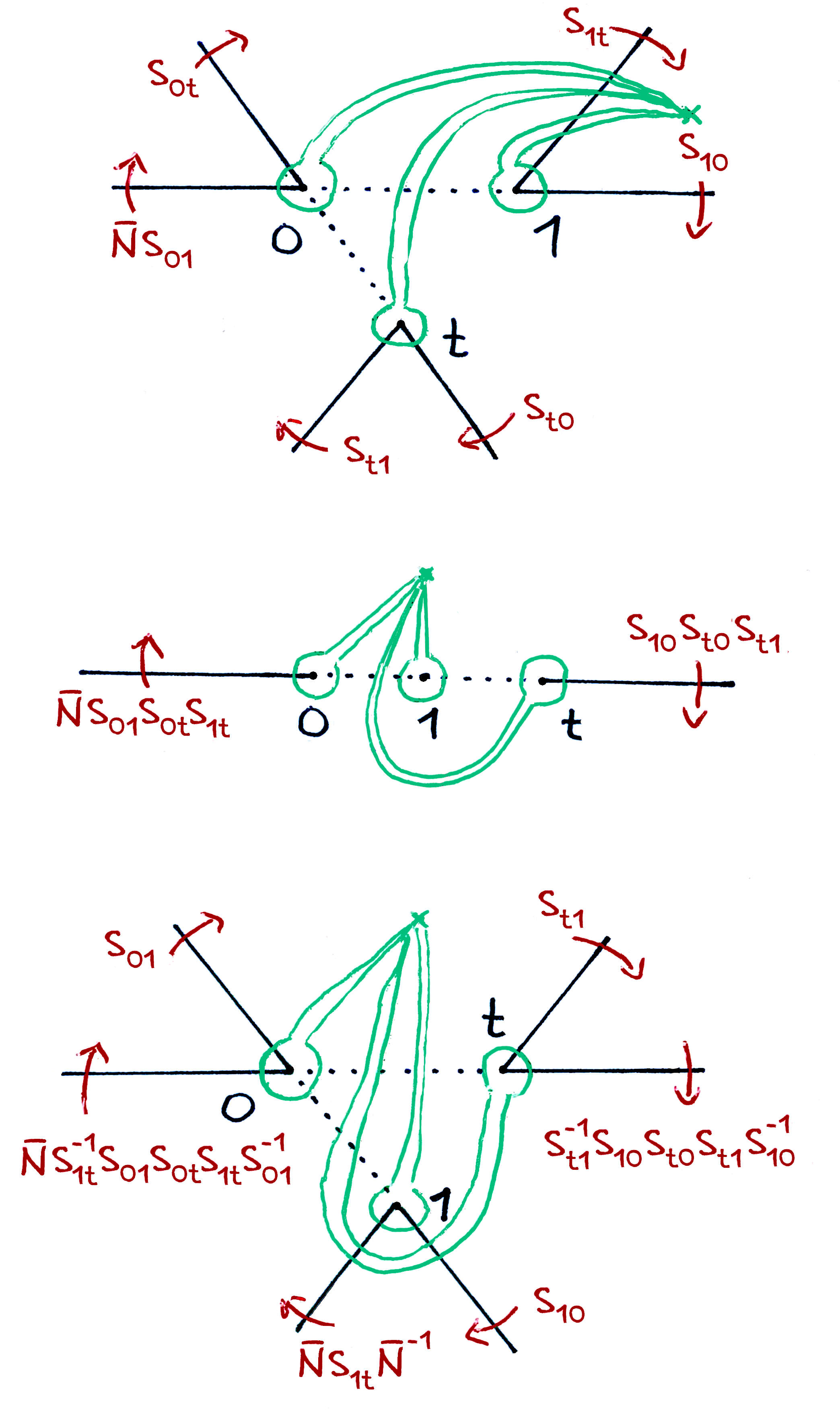}
\caption{Action of $\beta_{t1}$.}	
\end{subfigure}
\caption{Braid actions on the Stokes matrices. The monodromy along each loop stays the same as the points $0,t,1$ move.}
\label{figure:wm-braidaction}
\end{figure}
It follows from Figure~\ref{figure:wm-braidaction} that the action on the Stokes matrices is given (up to a simultaneous conjugation by diagonal matrices) by
\begin{equation*}
\begin{aligned}
(\beta_{0t})_*:\
\bar N &\mapsto P_{0t}\bar NP_{0t}, \\
S_{0t} &\mapsto P_{0t}S_{t0}P_{0t},\\
S_{1t} &\mapsto P_{0t}S_{t0}^{-1}S_{1t}S_{10}S_{t0}S_{1t}^{-1}P_{0t},\\
S_{10} &\mapsto P_{0t}S_{1t}P_{0t},\\
S_{t0} &\mapsto P_{0t}S_{0t}P_{0t},\\
S_{t1} &\mapsto P_{0t}S_{0t}^{-1}S_{t1}\bar NS_{01}S_{0t}\bar N^{-1}S_{t1}^{-1}P_{0t},\qquad\\
S_{01} &\mapsto P_{0t}\bar N^{-1}S_{t1}\bar NP_{0t},\\
\end{aligned}
\begin{aligned}
(\beta_{t1})_*:\ 
\bar N &\mapsto P_{t1}\bar NP_{t1}, \\
S_{0t} &\mapsto P_{t1}S_{01}P_{t1},\\
S_{1t} &\mapsto P_{t1}S_{t1}P_{t1},\\
S_{10} &\mapsto P_{t1}S_{t1}^{-1}S_{10}S_{t0}S_{t1}S_{10}^{-1}P_{t1},\\
S_{t0} &\mapsto P_{t1}S_{10}P_{t1},\\
S_{t1} &\mapsto P_{t1}\bar NS_{1t}\bar N^{-1}P_{t1},\\
S_{01} &\mapsto P_{t1}S_{1t}^{-1}S_{01}S_{0t}S_{1t}S_{01}^{-1}P_{t1}.\\
\end{aligned}
\end{equation*}
From this the corresponding action of $g_{0t}$, resp. $g_{t1}$, on the invariant elements \eqref{eq:wm-ss} can be easily expressed, and subsequently re-expressed in terms of the coordinates $X_0,X_t,X_1$ \eqref{eq:wm-XXX}.
Or equivalently, one can express the induced action on the monodromy matrices \eqref{eq:wm-barM}:
\begin{equation*}
\begin{aligned}
(\beta_{0t})_*:
\bar M_0 &\mapsto P_{0t}S_{t0}^{-1}\bar M_t S_{t0}P_{0t}, \\
\bar M_t &\mapsto P_{0t}S_{t0}^{-1}\bar M_t\bar M_0\bar M_t^{-1}S_{t0}P_{0t},\\
\bar M_1 &\mapsto P_{0t}S_{t0}^{-1}\bar M_1 S_{t0}P_{0t}, \\
\end{aligned}
\begin{aligned}
(\beta_{t1})_*:
\bar M_0 &\mapsto P_{t1}S_{1t}^{-1}\bar M_0 S_{1t}P_{t1}, \\
\bar M_t &\mapsto P_{t1}S_{1t}^{-1}\bar M_1 S_{1t}P_{t1}, \\
\bar M_1 &\mapsto P_{t1}S_{1t}^{-1}\bar M_1\bar M_t\bar M_1^{-1} S_{1t}P_{t1}. \\
\end{aligned}
\end{equation*}

\subsection{Confluence of the Birkhoff systems and their character varieties}

The substitution \eqref{eq:wm-tt}, \eqref{eq:wm-tildev1}
in the Birkhoff system \eqref{eq:wm-bsyst} and a conjugation by 
$Q=\left(\begin{smallmatrix} \epsilon\tilde t & & \\ & 1 & -1 \\ & & \epsilon\tilde t \end{smallmatrix}\right)$, corresponding to the change of variable $\tilde y= Q y$,
gives a parametric family of isomonodromic systems
\begin{equation}\label{eq:wm-bsysttilde}
\xi^2\frac{d\tilde y}{d\xi}=\Big[\left(\begin{smallmatrix} 0 &&\\ &1+\epsilon\tilde t& 1\\ && 1\end{smallmatrix}\right)+ \xi\tilde B(\tilde t,\epsilon)\Big]\tilde y,  
\end{equation}
with
\begin{align*}
\tilde B=QBQ^{-1}&=
\mbox{\scriptsize$\begin{pmatrix}
\vartheta_0 & \epsilon\tilde t(w_0u_tv_t-v_t) & -w_0u_0v_0-v_1-v_t \\[3pt]
\tfrac{1}{\epsilon\tilde t} (w_t-w_1)u_0v_0  & \vartheta_t+v_t-w_1u_tv_t  &  -\tfrac{1}{\epsilon\tilde t}(w_t-w_1)u_0v_0 \\[3pt]
 w_1u_0v_0-v_0 & \epsilon\tilde t(w_1u_tv_t-v_t) & \vartheta_1-v_t +w_1u_tv_t
\end{pmatrix}$}\\[6pt] 
&=\mbox{\scriptsize$\begin{pmatrix}
\vartheta_0 & \tilde u_1\tilde v_1 w_0-\tilde v_1 & -\kappa_2-\vartheta_0 \\[4pt]
-\tilde b_{t1} & \tilde\vartheta_1+\kappa_2+\tilde b_{10} & \tilde b_{t1} \\[4pt]
 \tilde b_{10} &\tilde t-\epsilon\tilde t[\tilde\vartheta_1+\kappa_2+\tilde b_{10}] & -\kappa_2-\tilde b_{10}
\end{pmatrix}$},
\end{align*} 
where $\kappa_2=-\tfrac{\vartheta_0+\tilde\vartheta_1+\vartheta_\infty}{2}$ and $\tilde u_1,\tilde v_1$ are as in \eqref{eq:wm-tildev1},
and
\begin{equation*}
\begin{scriptstyle}
\tilde b_{10}=u_0v_0\tfrac{\tilde v_1+\tilde t+\epsilon\tilde t(v_0-\kappa_2-\tilde\vartheta_1)}{\tilde u_1\tilde v_1+\epsilon\tilde tu_0v_0}-v_0,\qquad
\tilde b_{t1}=u_0v_0 \tfrac{v_0-\tilde\vartheta_1-\kappa_2-\tfrac{u_0v_0}{\tilde u_1\tilde v_1}(\tilde t+\tilde\vartheta_1)}
	{\tilde u_1\tilde v_1+\epsilon\tilde t u_0v_0}. 
\end{scriptstyle}
\end{equation*}

When $\epsilon\neq 0$ the irregular singular point at the origin  is non-resonant and the local description of the Stokes phenomenon is the same as in the precedent section with the six Stokes matrices $S_{ij}$ \eqref{eq:wm-sij}.
But for  $\epsilon=0$ the singularity becomes resonant and the description changes.

For $|\epsilon t|$ small, there is a formal transformation 
\begin{equation}\label{eq:wm-hatT}
\tilde y=\hat T(\xi,\epsilon)\left(\begin{smallmatrix}\tilde y' \\ \tilde y''\end{smallmatrix}\right),\quad (\tilde y',\tilde y'')\in\C\!\times\!\C^2,
\end{equation}
written as a formal power series in $\xi$ with coefficients analytic in $\epsilon$,   
that splits the system in two diagonal blocks, one corresponding to the eigenvalue $0$, other corresponding to the other eigenvalues 
$\{1+\epsilon\tilde t,1\}$ (cf. \cite{Ba}):
\begin{align}\label{eq:wm-bsystV0}
\xi^2\frac{d\tilde y'}{d\xi}&=\xi\vartheta_0 \,\tilde y',\\
\xi^2\frac{d\tilde y''}{d\xi}&=\left[ 
\left(\begin{smallmatrix}  1+\epsilon\tilde t& 1\\[3pt] 0& 1\end{smallmatrix}\right) +\xi\tilde B''(\epsilon) +O(\xi^2)
\right]\tilde y'',
\label{eq:wm-bsystV1}
\end{align}
where $\tilde B''=\left(\begin{smallmatrix} \tilde b_{tt} & \tilde b_{t1}\\[0pt] \tilde b_{1t} & \tilde b_{11} \end{smallmatrix}\right)$ 
is the submatrix of $\tilde B=\big(b_{ij}\big)$.
This formal transformation $\hat T(\xi,\epsilon)$ is Borel summable in $\xi$ all directions except of the coalescing singular directions $\pm\R^+$ and $\pm(1\!+\!\epsilon\tilde t)\R^+$.
Therefore it possesses Borel sums $T_\alpha(\xi,\epsilon)$ on four sectors:
two large sectors which persist to the limit $\epsilon\to 0$, and two small ones that disappear whenever $\epsilon\tilde t\in\R$.
Only the Borel sums on the large sectors will be considered. 

\medskip

\paragraph{Confluence of eigenvalues in the subsystem \eqref{eq:wm-bsystV1}.}
The phenomenon of confluence of eigenvalues in $2\!\times\!2$ parametric systems at an irregular singular point of Poincaré rank 1 was studied previously by the author \cite{Kl1}.
This paragraph applies some of the results to the system \eqref{eq:wm-bsystV1}.

The matrix of the right side of the system has its eigenvalues equal to
\[\lambda^{(0)}+\xi\lambda^{(1)}\pm \sqrt{\alpha^{(0)}+\xi\alpha^{(1)}}\quad (\text{mod }\xi^2),\] 
where
\begin{align*}
\lambda^{(0)}&=1+\tfrac{\epsilon\tilde t}{2},\qquad &
\lambda^{(1)}&=\tfrac{\tilde b_{tt}+\tilde b_{11}}{2}=\tfrac{\tilde\vartheta_1}{2},\\
\alpha^{(0)}&=\left(\tfrac{\epsilon\tilde t}{2}\right)^2,\qquad &
\alpha^{(1)}&=\tilde b_{1t}+\tfrac{\epsilon\tilde t(\tilde b_{tt}-\tilde b_{11})}{2}=\epsilon\tilde t\tfrac{\vartheta_t-\vartheta_1}{2}=
\tilde t-\epsilon\tilde t\tfrac{\tilde\vartheta_1}{2},
\end{align*}
constitute the formal invariants of the system.
In \cite{Kl1}, it has been shown that \eqref{eq:wm-bsystV1} possess a fundamental matrix solutions of the form 
\begin{equation*}
\tilde Y_\bullet''= R_\bullet''(\xi,t,\epsilon)\cdot e^{-\tfrac{\lambda^{(0)}}{\xi}}\xi^{\lambda^{(1)}}
 \mbox{\scriptsize$
 \begin{pmatrix} \big({\alpha^{(0)}+\xi\alpha^{(1)}}\big)^{-\tfrac{1}{4}}\hskip-12pt & \\[0pt] &  \big({\alpha^{(0)}+\xi\alpha^{(1)}}\big)^{\tfrac{1}{4}}  \end{pmatrix}
 $}
 \begin{pmatrix} 1 & 1 \\ 1 & \!\!\!\!-1 \end{pmatrix}
 \begin{pmatrix} e^\Theta\!\!\!\! & \\ & \!e^{-\Theta}\! \end{pmatrix},
\end{equation*} 
where 
\[\Theta(\xi,\epsilon,\tilde t)=\!\int_\infty^\xi\!\! \tfrac{\sqrt{\alpha^{(0)}+\zeta\alpha^{(1)}}}{\zeta^2}d\zeta =
\left\{  \hskip-6pt 
\begin{array}{l l}
-\frac{\sqrt{\alpha^{(0)}+\xi\alpha^{(1)}}}{\xi}-
\frac{\alpha^{(1)}}{2\sqrt{\alpha^{(0)}}}\log\frac{\sqrt{\alpha^{(0)}+\xi\alpha^{(1)}}+\sqrt{\alpha^{(0)}}}{\sqrt{\alpha^{(0)}+\xi\alpha^{(1)}}-\sqrt{\alpha^{(0)}}},   &  \epsilon\neq 0,\\[6pt]
-\frac{2\alpha^{(0)}}{\xi}, & \epsilon=0,\\
\end{array}\right.
\]
and $R_\bullet''$, $\bullet=\sI_\pm,\sO$, are invertible analytic transformations defined on  certain domains in the $\xi$-space. 
These domains are delimited by the so called Stokes curves (in the sense of exact WKB analysis \cite{KawTak}): 
the separatrix curves of the foliation by real-time trajectories of the vector field
\[e^{i\omega}\tfrac{\xi^2}{2\sqrt{\alpha^{(0)}+\xi\alpha^{(1)}}}\tfrac{\partial}{\partial\xi},\qquad\text{with some}\quad  \omega\in]\!-\!\tfrac{\pi}{2}+\eta,\tfrac{\pi}{2}-\eta[,\ \eta>0,\]
emanating  either from $\infty$ or from the ``turning point'' at $\xi=-\frac{\alpha^{(0)}}{\alpha^{(1)}}$, if $\epsilon\neq 0$. 
\begin{figure}[t]
	\centering
	\begin{tikzpicture} [scale=0.6] 
		\filldraw[fill=black!10] (0,0) circle (5);
		\filldraw[fill=white] (0,0) arc(180:360:1.5) arc(0:120:2) arc(60:180:2) arc(180:360:1.5);
		\draw[->] (0,{sqrt(3)}) -- (0,-5) node[below] {$\tfrac{\alpha^{(0)}}{\alpha^{(1)}}\mathbb R^+$};
		\draw[dashed, ->] (0,0) -- (-5,0) node[left] {$-\mathbb R^+$};
		\draw (0.5,4) node {$\sO$};
		\draw (2,.5) node {$\sI_+$};
		\filldraw[color=white] (-2.6,0) circle (0.35);
		\draw (-2.5,-.1) node {$\sI_-$};
		\filldraw (0,0) circle (3pt) node[right] {$0$};
		\draw (0,{sqrt(3)}) node[above] {$-\tfrac{\alpha^{(0)}}{\alpha^{(1)}}$};
		\draw[color=red, very thick, ->] (-4,-.5) node[below] {$e_te_1I$} -- (-4,.5); 
		\draw[color=red, very thick, ->] (-1.8,-.3) node[below] {$N''$} (-1.8,-.4) -- (-1.8,.6); 
		\draw[color=red, very thick, ->] (-.5,.7) node[left] {$S_{1t}''\!\!$} -- (.5,.7); 
		\draw[color=red, very thick, ->] (.4,-.5) node[right] {$\!\! S_{t1}''$} -- (-.6,-.5); 
		\draw[color=red, very thick, ->] (.5,-4) node[right] {$\!\!\tilde S_{t}''$} -- (-.5,-4); 
		\draw[color=red, very thick, ->] (-130:2.5) node[below] {$C_-''$} -- +(70:1); 
		\draw[color=red, very thick, ->] (130:3) node[above] {$\tfrac{1}{e_te_1}C_-''N''$} -- +(-60:1); 
		\draw[color=red, very thick, ->] (50:3) node[above] {$C_+''$} -- +(-120:1); 
		\draw[color=red, very thick, ->] (-40:2.8) node[below] {$C_+''$} -- +(110:1); 
	\end{tikzpicture}
	\caption{The inner domains $\sI_\pm$ (white) and the outer domain $\sO$ (grey), and the connection matrices of the system \eqref{eq:wm-bsystV1}.
	Note that for $\epsilon\tilde t\neq$ the point $-\frac{\alpha^{(0)}}{\alpha^{(1)}}\neq 0$ is not a singularity of \eqref{eq:wm-bsystV1} thus solutions are regular there.}
	\label{figure:wm-5}
\end{figure}
There are two kinds of such sectorial domains (see Figure~\ref{figure:wm-5}), whose shape in the coordinate $\frac{\xi}{\alpha^{(1)}}$ depends only on a parameter
$\ \mu=\frac{\alpha^{(0)}}{(\alpha^{(1)})^2}=\big(\tfrac{\epsilon}{2-\epsilon\tilde\vartheta_1}\big)^2$:
\begin{itemize}\leftskip-12pt
\item[-] A pair of \emph{inner domains} $\sI_\pm$ for $\epsilon\neq 0$: these are sectors at 0 of radius proportionate to $\mu\sim\frac{\epsilon^2}{4}$, 
separated one from another by the singular directions $\pm\epsilon t\R^+$. They disappear at the limit. 
The connection matrices between $\tilde Y''_{\sI_+}$ and $\tilde Y''_{\sI_-}$ are given by
$S_{1t}''=\left(\begin{smallmatrix} 1 & 0 \\[3pt]  s_{1t} & 1 \end{smallmatrix}\right)$,
$S_{t1}''=\left(\begin{smallmatrix} 1 &  s_{t1} \\[3pt] 0 & 1 \end{smallmatrix}\right)$,
the submatrices of the Stokes matrices $S_{1t}$, $S_{t1}$ \eqref{eq:wm-sij},
and needs to take into account also the formal monodromy $N''=\left(\begin{smallmatrix} e_t^2 &  \\[3pt]  & e_1^2 \end{smallmatrix}\right)$,
the submatrix of $\bar{N}$ \eqref{eq:wm-barN}. See Figure \ref{figure:wm-5}.

\item[-] An \emph{outer domain} $\sO$ covering a complement of $\sI_+\cup \sI_-$ in a disc of a fixed radius with a cut in the direction $\frac{\alpha{(0)}}{\alpha^{(1)}}\R^+\sim \epsilon^2t\R^+$. We are mainly interested in the limit when $\epsilon\to 0$ along the sequences  
$\frac{1}{\epsilon}\in\frac{1}{\epsilon_0}\pm 2\N$, hence the cut can be assumed to be in the direction $t\R^+$. 
The connection matrix on this cut is 
$\tilde S_{t}''=\left(\begin{smallmatrix} X_0 & -i \\[3pt] -i & 0 \end{smallmatrix}\right)$
with $X_0$ \eqref{eq:wm-XXX} the trace of monodromy of the subsystem \eqref{eq:wm-bsystV1} around $0$. See Figure \ref{figure:wm-5}.

\item[-] The connection matrices between the outer and the inner solution bases can be expressed as
\begin{equation*}
	C_+''=\left(\begin{smallmatrix}  1 & \frac{1}{s_{1t}} \\[1pt]  0 & -i\frac{e_t}{e_1s_{1t}} \end{smallmatrix}\right),\qquad
	C_-''=\left(\begin{smallmatrix}  0 & \frac{e_t}{e_1s_{1t}} \\[1pt] i & -i\frac{e_t^2}{e_1^2s_{1t}} \end{smallmatrix}\right).
\end{equation*}
\end{itemize}

\medskip\goodbreak

Returning now to the full system \eqref{eq:wm-bsysttilde}, one must intersect the domains $\sI_\pm, \sO$ with the sectors of the Borel summability
of the transformation $\hat T$ \eqref{eq:wm-hatT}. The full picture is therefore that of Figure  \ref{figure:wm-6}.
There are four inner Stokes matrices $S_{1t}$, $S_{t1}$ and $S_{10}S_{t0}$, $S_{01}S_{0t}$ \eqref{eq:wm-sij} between the canonical solutions on the inner domains,
and three outer Stokes matrices $\tilde S_0$, $\tilde S_t$, $\tilde S_1$ between the canonical solutions on the outer domains of the form:
\begin{equation}\label{eq:wm-tildeStokes}
\tilde S_0=\left(\begin{smallmatrix} 1 & \tilde s_{0t} & \tilde s_{01} \\[3pt]  & 1 &  \\[3pt] &  & 1 \end{smallmatrix}\right),\quad
  \tilde S_t=\left(\begin{smallmatrix} 1 &  &  \\[3pt]  & X_0 & -i \\[3pt]  & -i & 0 \end{smallmatrix}\right),\qquad
  \tilde S_1=\left(\begin{smallmatrix} 1 &  &  \\[3pt]  \tilde s_{t0} & 1 &  \\[3pt] \tilde s_{10} &  & 1 \end{smallmatrix}\right),\quad
  \tilde N=\left(\begin{smallmatrix} e_0^2 &  &  \\[3pt]  & e_te_1 &  \\[3pt]  &  & e_te_1 \end{smallmatrix}\right).
\end{equation}
The connection matrices between the canonical bases on the inner and outer domains are provided by:
\begin{equation}\label{eq:wm-C}
C_+=\left(\begin{smallmatrix} 1 &  &  \\[3pt]  & 1 & \frac{1}{s_{1t}} \\[1pt]  & 0 & -i\frac{e_t}{e_1s_{1t}} \end{smallmatrix}\right),\quad
 C_-=\left(\begin{smallmatrix} 1 &  &  \\[3pt]  & 0 & \frac{e_t}{e_1s_{1t}} \\[1pt]  & i & -i\frac{e_t^2}{e_1^2s_{1t}} \end{smallmatrix}\right).
\end{equation}

\begin{figure}[t]
\centering
\contourlength{2pt}
\begin{tikzpicture} [scale=0.6] 
	\draw (0,0) arc(180:360:1.5) arc(0:120:2) arc(60:180:2) arc(180:360:1.5);
	\draw[->] (0,{sqrt(3)}) -- (0,-4) node[left] {$\tfrac{\alpha^{(0)}}{\alpha^{(1)}}\mathbb R^+$};
	\draw[->] (0:5) -- (180:5); \draw (182:5) node[left] {$-\mathbb R^+$};
	\draw[->] (-5:5) -- (175:5); \draw (173:5) node[left] {$-(1+\epsilon\tilde t)\mathbb R^+$};
	\filldraw (0,0) circle (2pt) (0,.3) node[right] {$0$};
	\draw (0,{sqrt(3)}) node[above] {$-\tfrac{\alpha^{(0)}}{\alpha^{(1)}}$};
	\draw[color=red, very thick, ->] (-4.2,-.2) node[below] {$\tilde N\tilde S_0$} (-4.2,-.3) -- (-4.2,.8); 
	\draw[color=red, very thick, ->] (4,.3) node[above] {$\tilde S_1$} -- (4,-.8); 
	\draw (-2.2,-.0) node[below] {\contour{white}{\color{red} $\bar NS_{01}S_{0t}$}}; \draw[color=red, very thick, ->]  (-2.2,-.3) -- (-2.2,.6); 
	\draw[color=red, very thick, ->] (1.8,.15) node[above] {$S_{10}S_{t0}$} (1.8,.3) -- (1.8,-.6); 
	\draw[color=red, very thick, ->] (-.5,1) node[left] {$S_{1t}\!\!$} -- (.5,1); 
	\draw[color=red, very thick, ->] (.4,-.5) node[right] {$\!\! S_{t1}$} -- (-.6,-.5); 
	\draw[color=red, very thick, ->] (.5,-3) node[right] {$\!\!\tilde S_{t}$} -- (-.5,-3); 
	\draw[color=red, very thick, ->] (-130:2.5) node[below] {$C_-$} -- +(70:1); 
	\draw[color=red, very thick, ->] (140:3) node[above] {$C_-\bar N\tilde N^{-1}$} (135:3) -- +(-60:1); 
	\draw[color=red, very thick, ->] (40:3) node[above] {$C_+$} (45:3) -- +(-120:1); 
	\draw[color=red, very thick, ->] (-40:2.8) node[below] {$\ C_+$} -- +(110:1); 
\end{tikzpicture}	
\caption{The connection matrices between different canonical solution bases of the confluent system \eqref{eq:wm-bsysttilde} with $\bar N$ \eqref{eq:wm-barN}, $S_{ij}$ \eqref{eq:wm-sij}, $\tilde S_i$, $\tilde N$ \eqref{eq:wm-tildeStokes} and $C_\pm$ \eqref{eq:wm-C}.}
\label{figure:wm-6}
\end{figure}

\begin{small}
\begin{lemma} The coefficients of the outer Stokes matrices are equal to
\begin{equation}\label{eq:wm-tildes}
\begin{aligned}
\tilde s_{0t}&=s_{0t}+ s_{01}s_{1t}, & \tilde s_{01}&=-i\tfrac{e_1}{e_t} s_{0t},\\
\tilde s_{t0}&=s_{t0}+\tfrac{s_{10}}{s_{1t}}, & \tilde s_{10}&=-i\tfrac{e_t}{e_1}\tfrac{s_{10}}{s_{1t}}.
\end{aligned}
\end{equation}
\end{lemma}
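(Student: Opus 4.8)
The plan is to read off \eqref{eq:wm-tildes} from the compatibility of the two descriptions of the Stokes data of the confluent Birkhoff system \eqref{eq:wm-bsysttilde} near $\xi=0$ drawn in Figure~\ref{figure:wm-6}: the \emph{inner} one, carrying the six inner Stokes matrices $S_{ij}$ \eqref{eq:wm-sij} and the inner formal monodromy $\bar N$ \eqref{eq:wm-N} on the small (vanishing) sectors, and the \emph{outer} one, carrying the outer Stokes matrices $\tilde S_i$ and the outer formal monodromy $\tilde N$ \eqref{eq:wm-tildeS} on the large (persisting) sectors, the two being glued along their common boundary by the connection matrices $C_\pm$ \eqref{eq:wm-C}. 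Each outer Stokes matrix is by definition the transition between two consecutive outer canonical solutions, and the geometry of the domains shows that this same transition is realized by a path dipping into the inner domains on one side of the resonant direction and leaving them on the other; hence each $\tilde S_i$ (or $\tilde N\tilde S_i$) equals a product of the form
\[
\tilde S_i \ =\ C_{\bullet}\,\Sigma_i\,C_{\circ}^{-1},\qquad \bullet,\circ\in\{+,-\},
\]
where $\Sigma_i$ is an ordered product of the inner Stokes matrices $S_{jk}$, with a copy of $\bar N$ (equivalently of $\mathrm{diag}(e_0^2,e_t^2,e_1^2)$ relative to $\tilde N$) inserted where the path crosses the cut. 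The first step is to fix, from Figure~\ref{figure:wm-6}, the exact order of the factors in each $\Sigma_i$ and which of $C_+,C_-$ sits on which side, using as in the proof of Proposition~\ref{prop:wm-modular} that $S_{jk}$ and $S_{lm}$ commute when their index pairs are disjoint in the relevant sense.

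The second step is the (routine) matrix algebra. For the upper-triangular matrix $\tilde S_0$ the relevant $\Sigma_0$ involves $S_{0t},S_{01},S_{1t}$ with $S_{01}$ preceding $S_{1t}$; since $E_{01}E_{1t}=E_{0t}$, the $(0,t)$-entry of $\Sigma_0$ is $s_{0t}+s_{01}s_{1t}$, and it is untouched by the conjugation by the $C_\pm$ (which act on the lower $2\times2$ block as far as column $t$ is concerned), so $\tilde s_{0t}=s_{0t}+s_{01}s_{1t}$; multiplying on the right by the relevant $C_\circ^{-1}$, whose $(t,1)$-entry equals $-i\tfrac{e_1}{e_t}$ by \eqref{eq:wm-C}, carries that surviving $(0,t)$-entry into the $(0,1)$-slot and gives $\tilde s_{01}=-i\tfrac{e_1}{e_t}s_{0t}$. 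Symmetrically, for the lower-triangular $\tilde S_1$ the inner factor contributes $s_{t0}$ and $s_{10}$ to column $0$, and left multiplication by the relevant $C_\bullet$, whose entries $\tfrac{1}{s_{1t}}$ in the $(t,1)$-slot and $-i\tfrac{e_t}{e_1s_{1t}}$ in the $(1,1)$-slot are read from \eqref{eq:wm-C}, produces the $(t,0)$-entry $s_{t0}+\tfrac{s_{10}}{s_{1t}}$ and the $(1,0)$-entry $-i\tfrac{e_t}{e_1}\tfrac{s_{10}}{s_{1t}}$, which is the second line of \eqref{eq:wm-tildes}.

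Finally I would verify the normalization checks that this routing must satisfy: that $C_\bullet\Sigma_i C_\circ^{-1}$ is genuinely of the claimed triangular shape (the unwanted $(t,1)$- and $(1,t)$-off-diagonal contributions cancelling), so that $\tilde S_0,\tilde S_1$ have exactly the form \eqref{eq:wm-tildeS}; and that the central part of the picture yields $\tilde S_t=\bigl(\begin{smallmatrix}X_0&-i\\-i&0\end{smallmatrix}\bigr)$, using the expression of $X_0$ in terms of $s_{t1}s_{1t}$ (the analogue of \eqref{eq:wm-XXX}) together with the normalization $e_te_1$ of $\tilde N$ --- this last identity is precisely the $2\times2$ confluence-of-eigenvalues computation of \cite{Kl1} already invoked for the subsystem \eqref{eq:wm-bsystV1}, so it can be quoted rather than redone. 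I expect the only real difficulty to lie in the first step: pinning down the orders of the factors in the $\Sigma_i$ and the branch of $\log\xi$ (equivalently, the placement of $\bar N$ across the cut) from Figure~\ref{figure:wm-6}; once that bookkeeping is fixed, the rest is a short computation, easily double-checked in \textsf{SageMath}.
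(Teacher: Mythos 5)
Your overall strategy is the right one, and it is the paper's: each outer Stokes matrix is a product of inner Stokes matrices conjugated by the connection matrices $C_\pm$ of \eqref{eq:wm-C}, read off from Figure \ref{figure:wm-6}, after which the lemma is elementary matrix multiplication. Indeed the paper's entire proof consists of the two identities
$\tilde S_0=C_-\,\tilde N^{-1}\bar N\,S_{01}S_{0t}\,\bar N^{-1}\tilde N\,C_-^{-1}$ and $\tilde S_1=C_+\,S_{10}S_{t0}\,C_+^{-1}$,
and one checks directly that these yield \eqref{eq:wm-tildes}. Your treatment of $\tilde S_1$ is essentially correct: in $C_+S_{10}S_{t0}$ the $(t,0)$- and $(1,0)$-entries are $s_{t0}+\tfrac{s_{10}}{s_{1t}}$ and $-i\tfrac{e_t}{e_1}\tfrac{s_{10}}{s_{1t}}$, and the right factor $C_+^{-1}$ leaves column $0$ alone.

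The $\tilde S_0$ part, however, contains a genuine gap. First, the routing is wrong: $S_{1t}$ does not occur in the inner product for $\tilde S_0$, so the cross term $s_{01}s_{1t}$ in $\tilde s_{0t}$ does not arise from $E_{01}E_{1t}=E_{0t}$ inside $\Sigma_0$. It arises from the conjugation itself: the diagonal factor $\tilde N^{-1}\bar N=\mathrm{diag}(1,\tfrac{e_t}{e_1},\tfrac{e_1}{e_t})$ rescales $s_{0t}\mapsto\tfrac{e_1}{e_t}s_{0t}$, $s_{01}\mapsto\tfrac{e_t}{e_1}s_{01}$, and the $(1,t)$-entry $\tfrac{e_1 s_{1t}}{e_t}$ of $C_-^{-1}$ then folds the rescaled $(0,1)$-entry back into the $(0,t)$-slot, producing $s_{0t}+s_{01}s_{1t}$. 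In particular your claim that the $(0,t)$-entry is ``untouched by the conjugation by the $C_\pm$'' is false, since right multiplication by $C_-^{-1}$ mixes columns $t$ and $1$. Second, your account is internally inconsistent: if the surviving $(0,t)$-entry were already $s_{0t}+s_{01}s_{1t}$ before the right multiplication, then transporting it to the $(0,1)$-slot by a $(t,1)$-entry $-i\tfrac{e_1}{e_t}$ would give $-i\tfrac{e_1}{e_t}(s_{0t}+s_{01}s_{1t})$, not the claimed $-i\tfrac{e_1}{e_t}s_{0t}$; the correct value $-i\tfrac{e_1}{e_t}s_{0t}$ comes from $\tfrac{e_1}{e_t}s_{0t}$ times the $(t,1)$-entry $-i$ of $C_-^{-1}$. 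Since you explicitly defer ``pinning down the orders of the factors in the $\Sigma_i$'' to a later step, and that determination is the entire content of the proof, the proposal as written does not establish the lemma; with the two displayed identities above in hand, the remaining computation is as routine as you expect.
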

\begin{proof}
We have $\tilde S_0=\tilde N^{-1}C_-\bar NS_{01}S_{0t}\bar N^{-1}\tilde N(C_-)^{-1}$, $\tilde S_1=C_+S_{10}S_{t0}(C_+)^{-1}$, see Figure \ref{figure:wm-6}.
\end{proof}
\end{small}
\goodbreak

\begin{remark}\label{remark:wm-1}
The inner Stokes matrices $S_{ij}$ are determined only up to conjugation by diagonal matrices
$\left(\begin{smallmatrix} d_0 &  &  \\[0pt]  & d_t & \\[0pt] & & d_1 \end{smallmatrix}\right)$. 
This corresponds through \eqref{eq:wm-tildes} to conjugation of the outer Stokes matrices $\tilde S_{i}$ by  
$\left(\begin{smallmatrix} d_0 &  &  \\[0pt]  & d_t & \\[0pt] & & d_t \end{smallmatrix}\right)$.
\end{remark}

The monodromy matrix of an outer solution around the origin is given by
\begin{equation}\label{eq:wm-tildeMinfty}
	\tilde M_\infty^{-1}=\tilde S_1\tilde S_t\tilde N\tilde S_0=
	\mbox{\scriptsize$\begin{pmatrix}
			e_0^2 & e_0^2\tilde s_{0t} & e_0^2\tilde s_{01} \\[3pt]
			e_0^2\tilde s_{t0} & e_te_1 X_0+e_0^2\tilde s_{t0}\tilde s_{0t} & -ie_te_1+e_0^2\tilde s_{t0}\tilde s_{01} \\[3pt]
			e_0^2\tilde s_{10} & -ie_te_1+e_0^2\tilde s_{10}\tilde s_{0t} & e_0^2\tilde s_{10}\tilde s_{01} 
		\end{pmatrix}$},
\end{equation}
and we know that its eigenvalues are $1$, $e^{-2\pi i\kappa_1}=\frac{e_0e_te_1}{e_\infty}$ and $e^{-2\pi i\kappa_2}={e_0e_te_1e_\infty}$. 

The new variables $\pX$ \eqref{eq:wm-UW} are defined by
\begin{equation}\label{eq:wm-XUW}
\pXone=X_0,\qquad 
\pXzero=e_0\tilde s_{10}\tilde s_{01}+e_0,\qquad 
\pXinf=ie_0\tilde s_{t0}\tilde s_{01}+\tfrac{e_te_1}{e_0}.
\end{equation}
They are invariant with respect to the conjugation of Remark \ref{remark:wm-1}.

Expressing the coefficients of the linear and the quadratic term of the characteristic polynomial of $\tilde M_\infty^{-1}$ \eqref{eq:wm-tildeMinfty}
gives
\begin{equation*}
e_te_1\pXone+e_0\pXzero+ e_0^2\tilde s_{0t}\tilde s_{t0} = 1+\tfrac{e_0e_te_1}{e_\infty}+e_0e_te_1 e_\infty:=E, 
\end{equation*}
\begin{equation*}
e_0e_te_1\pXone\pXzero+e_0e_te_1\pXinf +ie_0^2e_te_1\tilde s_{0t}\tilde s_{10}=\tfrac{e_0e_te_1}{e_\infty}+e_0e_te_1e_\infty+e_0^2e_t^2e_1^2:=E'.
\end{equation*}
Inserting these two identities into the identity
\begin{align*}
0&=e_0^2\cdot\tilde s_{0t}\tilde s_{10}\cdot\tilde s_{t0}\tilde s_{01}- 
e_0^2\cdot\tilde s_{0t}\tilde s_{t0}\cdot\tilde s_{10}\tilde s_{01}\\
&=-i\tilde s_{0t}\tilde s_{10}(e_0\pXinf+e_te_1)-\tilde s_{0t}\tilde s_{t0}(e_0\pXzero-e_0^2)=0,
\end{align*}
gives the Fricke relation \eqref{eq:wm-tildeF}
\begin{equation*}
0=\pXone\pXzero\pXinf+\pXzero^2+\pXinf^2-\pthetaone\pXone-\pthetazero\pXzero-\pthetainf\pXinf+\pthetat,
\end{equation*}
with
\begin{equation*}
\begin{aligned}
\pthetaone&=e_te_1,\qquad 
&\pthetazero&=\tfrac{e_te_1}{e_0}+\tfrac{E'}{e_0e_te_1}=a_\infty+e_te_1a_0,\qquad \\
\pthetat&=E+\tfrac{E'}{e_0^2}=1+e_te_1a_0 a_\infty+e_t^2e_1^2, \qquad
&\pthetainf&=e_0+\tfrac{E}{e_0}=a_0+e_te_1a_\infty.
\end{aligned}
\end{equation*}

The formulas of change of variables \eqref{eq:wm-UWXXX}, \eqref{eq:wm-XXXUW} of Theorem~\ref{theorem:wm-Phi} between $(X,\theta)$ and $(\pX,\ptheta)$ are obtained from \eqref{eq:wm-XUW}, \eqref{eq:wm-tildes} and \eqref{eq:wm-XXX}.
As remarked on page~\pageref{page:wm-line}, the singular line $L_0$ \eqref{eq:wm-lineL0} corresponds to $s_{1t}=0$, i.e. to the triviality of the Stokes matrix $S_{1t}$ \eqref{eq:wm-sij}.

\small

\goodbreak


\begin{thebibliography}{AAAAA}
\bibitem[AK97]{AK} F.V. Andreev, A.V. Kitaev, \textit{Exponentially small corrections to divergent asymptotic expansions of solutions
of the fifth Painlev\'e equation}, Math. Res. Lett. \textbf{4} (1997), 741–759.
\bibitem[BV89]{BaVa}  D.G. Babbitt, V.S. Varadarajan, \textit{Local moduli for meromorphic differential equations}, Astérisque \textbf{169-170} (1989), 1--217.
\bibitem[Bal00]{Ba} W. Balser, \textit{Formal power series and linear systems of meromorphic ordinary differential equations}, Universitext, Springer-Verlag (2000). 		
\bibitem[BJL81]{BJL2} W. Balser, W.B. Jurkat, D.A. Lutz, \textit{On the reduction of connection problems with an irregular singularity to ones with only regular singularities, I, II}, SIAM J. Math. Anal. \textbf{12} (1981), 691--721, SIAM J. Math. Anal. \textbf{19} (1988), 398--443.
\bibitem[Bit16]{Bit}  A. Bittmann, \textit{Doubly-resonant saddle-nodes in $(\C^3,0)$ and the fixed singularity at infinity in Painlevé equations: analytic classification}, Ann. Inst. Fourier \textbf{68} (2018), 1715--1830.
\bibitem[Boa05]{Bo05} P. Boalch, \textit{From Klein to Painlevé via Fourier, Laplace, Jimbo}, Proc. London Math. Soc. \textbf{90} (2005),  167--208. 	
\bibitem[Boa10]{Bo10} P. Boalch, \textit{Towards a nonlinear Schwarz's list}, in: \textit{The many facets of geometry: a tribute to Nigel Hitchin}
(J-P. Bourguignon, O. Garcia-Prada, S. Salamon eds.), Oxford University Press (2010), 210--236. 
\bibitem[Boa14]{Bo14} P. Boalch, \textit{Geometry and braiding of Stokes data; Fission and wild character varieties}, Ann. of Math. \textbf{179} (2014), 301--365.	
\bibitem[Bol97]{Bol} A. Bolibruch, \textit{On isomonodromic deformations of Fuchsian systems}, J. Dyn. Control Syst. \textbf{3} (1997),  589--604. 	
\bibitem[BW79]{BW79} J.W.Bruce, C.T.C. Wall, \textit{On the classification of cubic surfaces}, J. London Math. Soc. (2) \textbf{19} (1979), 245--256.
\bibitem[CL09]{CL} S. Cantat, F. Loray, \textit{Dynamics on Character Varieties and Malgrange irreducibility of Painlevé VI equation}, Ann. Inst. Fourier \textbf{59} (2009), 2927--2978.
\bibitem[Cas09]{Cas} G. Casale, \textit{Une preuve galoisienne de l’irréductibilité au sens de Nishioka-Umemura de la première équation de Painlevé},
Astérisque \textbf{324} (2009), 83--100. 
\bibitem[Cay69]{Cay69} A. Cayley, \textit{A memoir on cubic surfaces}, Phil. Trans. Roy. Soc. \textbf{159} (1869), 231--326. 
\bibitem[CMR17]{CMR15} L.Chekhov, M. Mazzocco, V. Rubtsov, \textit{Painlevé monodromy manifolds, decorated character varieties, and cluster algebras}, IMRN \textbf{2017} (2017), 7639-7691.
\bibitem[Dub96]{Dub96} B. Dubrovin, \textit{Geometry of 2D topological field theories}, Lect. Notes Math. \textbf{1620}, Springer-Verlag (1996), 120--348.
\bibitem[Dub99]{Dub99} B. Dubrovin, \textit{Painlevé transcendents in two-dimensional topological field theory} in: \textit{The Painlevé property: One century later (R.~Conte, ed.)}, CRM Ser. Math. Phys., Springer-Verlag (1999), 287--412. 
\bibitem[DM00]{DM} B. Dubrovin, M. Mazzocco, \textit{Monodromy of certain Painlevé VI transcendents and reflection groups}, Inventiones Math. \textbf{141} (2000), 55--147.
\bibitem[Du98]{Duv} A. Duval, \textit{Confluence procedures in the generalized hypergeometric family}, J. Math. Sci. Univ. Tokyo \textbf{5} (1998), 597--625.
\bibitem[Fu07]{Fu} R. Fuchs, \textit{\"Uber lineare homogene Differentialgleichungen zweiter Ordnung mit drei im Endlichen gelegenen wesentlich singul\"aren Stellen}, Math. Ann. \textbf{63} (1907), 301-321.
\bibitem[Gam10]{Gam} B. Gambier, \textit{Sur les équations différentielles du second ordre et du premier degré dont l'intégrale générale est à points critique fixés}, Acta Math. \textbf{33} (1910), 1–55.
\bibitem[Gar19]{Gar} R. Garnier, \textit{Sur les singularités irrégulières des équations différentielles linéaires}, J. Math. Pures Appl. $8^e$ Série (1919), 99--200.
\bibitem[Glu04]{Gl} A. Glutsyuk, \textit{Confluence of singular points and Stokes phenomena}. In: \textit{Normal forms, bifurcations and finiteness problems in differential equations}, NATO Sci. Ser. II Math. Phys. Chem. \textbf{137}, Kluwer Acad. Publ., 2004.
\bibitem[Glu99]{Gl1} A. Glutsyuk, \textit{Stokes Operators via Limit Monodromy of Generic Perturbation}, J. Dyn. Control Syst. \textbf{5} (1999), 101--135.
\bibitem[Guz01]{Guz} D. Guzzetti, \textit{On the critical behavior, the connection problem and the elliptic representation of a Painlevé VI equation}, Math. Phys.
Anal. Geom. \textbf{4} (2001), 293–377.
\bibitem[Guz15]{Guz2} D. Guzzetti, \textit{A Review of the Sixth Painlevé Equation}, Constr. Approx. \textbf{41} (2015), 495–527. 
\bibitem[HF07]{HF} Y. Haraoka, G. Filipuk, \textit{Middle convolution and deformation for fuchsian systems}, J. London Math. Soc. \textbf{76} (2007),  438--450. 
\bibitem[Har94]{Har94} J. Harnad, \textit{Dual isomonodromic deformations and moment maps to loop algebras}, Commun. Math. Phys. \textbf{166} (1994),  337--365. 
\bibitem[HLR13]{HLR} J. Hurtubise, C. Lambert, C. Rousseau, \textit{Complete system of analytic invariants for unfolded differential linear systems with an irregular singularity of Poincaré rank k}, Moscow Math. J. \textbf{14} (2013), 309--338. 
\bibitem[IY08]{IlYa} Y. Ilyashenko, S. Yakovenko, \textit{Lectures on Analytic Differential Equations}, Grad. Studies Math. \textbf{86}, Amer. Math. Soc., Providence (2008).	
\bibitem[IIS06]{IIS} M. Inaba, K. Iwasaki, M.-H. Saito, \textit{Dynamics of the sixth Painlevé equation}, in: 
\textit{Théories asymptotiques et équations de Painlevé - Angers, juin 2004 (É.~Delabaere, M.~Loday-Richaud, eds.)},
Séminaires et Congrès \textbf{14}, Soc. Math. France (2006), 103--167.
\bibitem[IKSY91]{IKSY} K. Iwasaki, H. Kimura, S. Shimomura, M. Yoshida, \textit{From Gauss to Painlevé: a modern theory of special functions}, 
Vieweg+Teubner Verlag (1991). 
\bibitem[Iwa03]{I} K. Iwasaki, \textit{An area-preserving action of the modular group on cubic surfaces and the monodromy of the Painlevé VI equation}, Commun. Math. Phys. \textbf{242} (2003), 185--219.
\bibitem[Iwa02]{I2} K. Iwasaki, \textit{A modular group action on cubic surfaces and the monodromy of the Painlevé VI equation}, Proc. Japan. Acad., Serie A \textbf{78} (2002), 131--135.
\bibitem[Jim82]{Jim} M. Jimbo, \textit{Monodromy problem and the boundary condition for some Painlevé equations}, Publ. RIMS, Kyoto Univ. \textbf{18} (1982), 1137--1161.			207--238.
\bibitem[JM81]{JM} M. Jimbo, T. Miwa, \textit{Monodromy preserving deformations of linear ordinary differential equations with rational coefficients II}, Physica D \textbf{2} (1981), 407--448.
\bibitem[KT05]{KawTak} T. Kawai, Y. Takei, \textit{Algebraic analysis of singular perturbation theory}, Transl. Mathematical Monographs \textbf{227}, Amer. Math. Soc. (2005).
\bibitem[KT06]{KT} H. Kimura, K. Takano, \textit{On confluences of general hypergeometric systems}, Tohoku Math. J.  \textbf{58} (2006), 1--31.
\bibitem[Kit06]{Kit} A.V. Kitaev, \textit{An isomonodromy cluster of two regular singularities}, J. Phys. A: Math. Gen. \textbf{39} (2006), 12033--12072.
\bibitem[Kli18]{Kl4}  M. Klime\v s, \textit{Stokes phenomenon and confluence in non-autonomous Hamiltonian systems}, Qual. Theory Dyn. Syst. \textbf{17} (2018),  665--708. 	
\bibitem[Kli19]{Kl1} M. Klime\v s, \textit{Analytic classification of families of linear differential systems unfolding a resonant irregular singularity},
SIGMA Symmetry, Integrability Geom. Methods Appl.  \textbf{16} (2020), 006, 46  pages. 
\bibitem[Kli20]{Kl3}  M. Klime\v s, \textit{Confluence of singularities in hypergeometric systems}, Funkcial. Ekvac.  \textbf{63} (2020), 153-181.
\bibitem[LR08]{LR1}  C. Lambert, C. Rousseau, \textit{The Stokes phenomenon in the confluence of the hypergeometric equation using Riccati equation}, J. Differential Equations \textbf{244} (2008) 2641--2664.
\bibitem[LR12]{LR}  C. Lambert, C. Rousseau, \textit{Complete system of analytic invariants for unfolded differential linear systems with an irregular singularity of Poincaré rank 1}, Moscow Math. J. \textbf{12} (2012), 77--138.
\bibitem[LT14]{LT} O. Lysovyy, Yu. Tykhyy, \textit{Algebraic solutions of the sixth Painlevé equation}, J. Geom. Phys. \textbf{85} (2014), 124--163.
\bibitem[Mag80]{Mag} W. Magnus, \textit{Rings of Fricke characters and automorphism groups of free groups}, Math. Z. \textbf{170} (1980), 91--103.
\bibitem[Mal01]{Mal} B. Malgrange, \textit{Le groupoïde de Galois d'un feuilletage}, Enseign. Math. \textbf{38} (2), (2001).
\bibitem[MR82]{MR1} J. Martinet, J.-P. Ramis, \textit{Problèmes de modules pour des équations différentielles non linéaires du premier ordre}, Publ. IHES \textbf{55} (1982), 63--164.
\bibitem[MR91]{MR2} J. Martinet, J.-P. Ramis, \textit{Elementary acceleration and multisummability. I}, Ann. Inst. Henri Poincaré (A) Physique Théorique \textbf{54} (1991), 331--401.
\bibitem[Maz01]{Maz01} M. Mazzocco, \textit{Rational solutions of the Painlevé VI equation}, J. Phys. A: Math. Gen. \textbf{34} (2001), 2281--2294.
\bibitem[Maz02]{Maz} M. Mazzocco, \textit{Painlevé sixth equation as isomonodromic deformation equation of an irregular system}, 
in: \textit{Isomonodromic Deformations and Applications in Physics (J.~Harnad, A~Its, eds.)}, CRM Proc. Lect. Notes \textbf{32}, Amer. Math. Soc. (2002).
\bibitem[NY98]{NY} M. Noumi, Y. Yamada, \textit{Affine Weyl Groups, Discrete Dynamical Systems and 	Painleve Equations}, Commun. Math. Phys. \textbf{199} (1998), 281--295. 
\bibitem[Obl04]{Obl04} A. Oblomkov, \textit{Double affine Hecke algebras of rank 1 and affine cubic surfaces}, Int. Math. Res. Notes IMRN \textbf{18} (2004), 877--912.
\bibitem[OO06]{OO06} Y. Ohyama, S. Okumura, \textit{A coalescent diagram of the Painlevé equations from the viewpoint of isomonodromic deformations}, J. Physics A: Math. Gen. \textbf{39} (2006), 12129--12151.
\bibitem[Oka79]{Oka2} K. Okamoto, \textit{Sur les feuilletages associés aux équations du second ordre à points critiques fixes de P. Painlevé, Espaces des conditions initiales}, Japan. J. Math \textbf{5} (1979), 1--79.
\bibitem[Oka80]{Oka1} K. Okamoto, \textit{Polynomial Hamiltonians associated with Painlevé Equations. I}, Proc. Japan Acad. Ser. A, Math. Sci. \textbf{56} (1980), 264--268.
\bibitem[Oka87a]{OkaVI} K. Okamoto, \textit{Studies on the Painlevé Equations. I. Sixth Painlevé Equation $P_{VI}$}, Ann. Mat. Pura. Appl. \textbf{146} (1987), 337--381.
\bibitem[Oka87b]{OkaV} K. Okamoto, \textit{Studies on the Painlevé Equations. II. Fifth Painlevé Equation $P_{V}$}, Japan J. Math. (N.S.) \textbf{13} (1987), 47--76.
\bibitem[Pai02]{Painleve} P.~Painlevé, \textit{Sur les équations différentielles du second ordre et d'ordre supérieur dont l'intégrale générale est uniforme}, Acta Math. \textbf{25} (1902), 1--85. 
\bibitem[Par01]{Pa}  L. Parise, \textit{Confluence de singularités régulières d'équations différentielles en une singularité irrégulière. Modèle de Garnier}, thèse de doctorat, IRMA Strasbourg (2001). [https://www-irma.u-strasbg.fr/annexes/publications/pdf/01020.pdf]
\bibitem[PR15]{PR} E. Paul, J.-P. Ramis, \textit{Dynamics on wild character varieties}, SIGMA Symmetry, Integrability Geom. Methods Appl. \textbf{11} (2015), 331--401.
\bibitem[PS09]{PS} M. van der Put, M-H. Saito, \textit{Moduli spaces for linear differential equations and the Painlevé equations}, Ann. Inst. Fourier \textbf{59} (2009), 2611--2667.
\bibitem[Ram85]{Ra} J.-P. Ramis, \textit{Phenomène de Stokes et resommation}, C. R. Acad. Sc. Paris \textbf{301} (1985), 99--102.
\bibitem[Ram89]{Ra1} J.-P. Ramis, \textit{Confluence and resurgence}, J. Fac. Sci. Univ. Tokyo, Sec. IA \textbf{36} (1989), 703--716.
\bibitem[RT08]{RT} C. Rousseau, L. Teyssier, \textit{Analytical moduli for unfoldings of saddle-node vector filds},  Moscow Math. J. \textbf{8} (2008), 547--614.
\bibitem[Sak01]{Sakai} H. Sakai, \textit{Rational Surfaces Associated with Affine Root Systems 	and Geometry of the Painlevé Equations}, Commun. Math. Phys. \textbf{220}, (2001), 165--229.
\bibitem[Sch85]{Sch1}  R. Sch\"afke, \textit{\"Uber das globale analytische Verhalten der Normall\"osungen von $(s-B)v'(s)=(B+t^{-1}A)v(s)$ und zweier Arten von assoziierten Funktionen}, Math. Nachr. \textbf{121} (1985), 123--145.
\bibitem[Sch98]{Sch2}  R. Sch\"afke, \textit{Confluence of several regular singular points into an irregular singular one}, J. Dyn. Control Syst. \textbf{4} (1998), 401--424.
\bibitem[Sch01]{Sch3}  R. Sch\"afke, \textit{Formal fundamental solutions of irregular singular differential equations depending upon parameters}, J. Dyn. Control Syst. \textbf{7} (2001), 501--533.
\bibitem[Shi83]{Shi} S. Shimomura, \textit{Analytic integration of some nonlinear ordinary differential equations and the fifth Painlevé equation in the neighborhood of an irregular singular point}, Funkcial. Ekvac. \textbf{26} (1983), 301--338.
\bibitem[Shi15]{Shi2} S. Shimomura, \textit{Series Expansions of Painlevé Transcendents near the Point at Infinity}, Funkcial. Ekvac. \textbf{58} (2015), 277--319.
\bibitem[SP03]{Singer-Put} M. Singer, M. van der Put, \textit{Galois Theory of Linear Differential Equations}, Grundlehren Math. Wissenschaften \textbf{328}, Springer-Verlag (2003).
\bibitem[Tak83]{Ta} K. Takano, \textit{A 2-parameter family of solutions of Painlevé equation (V) near the point at infinity}, Funkcial. Ekvac. \textbf{26} (1983), 79--113.
\bibitem[Tak86]{Ta86} K. Takano, \textit{Reduction for Painlevé equations at the fixed singular points of the first kind}, Funkcial. Ekvac. \textbf{29} (1990), 99-119.
\bibitem[Tak90]{Ta90} K. Takano, \textit{Reduction for Painlevé equations at the fixed singular points of the second kind}, J. Math. Soc. Japan \textbf{42} (1990), 423--443.
\bibitem[Yam11]{Yam} D. Yamakawa, \textit{Middle convolution and Harnad duality}, Math. Ann. \textbf{349} (2011  ), 215--262.
\bibitem[Yos84]{Yo1} S. Yoshida, \textit{A general solution of a nonlinear 2-system without Poincaré's condition at an irregular singular point}, Funkcial. Ekvac. \textbf{27} (1984), 367--391.
\bibitem[Yos85]{Yo2} S. Yoshida, \textit{2-parameter family of solutions of Painlevé equations (I)-(V) at an irregular singular point}, Funkcial. Ekvac. \textbf{28} (1985), 233--248.
\bibitem[Zha96]{Zh} C. Zhang, \textit{Confluence et phenomène de Stokes}, J. Math. Sci. Univ. Tokyo \textbf{3} (1996), 91--107.






\end{thebibliography}
\end{document}